\documentclass[a4paper,12pt,reqno,]{scrartcl}
\usepackage[latin1]{inputenc}
\usepackage{amsmath, amsthm, amssymb}
\usepackage{nicefrac}

\newtheorem{satz}{Theorem}[section]
\newtheorem{hsatz}[satz]{Proposition}
\newtheorem{lem}[satz]{Lemma}
\newtheorem{folg}[satz]{Corollary}
\newtheorem{ann}[satz]{Assumption}
\newtheorem{defn}[satz]{Definition}

\theoremstyle{remark}
\newtheorem{bem}[satz]{Remark}
\newtheorem{bsp}[satz]{Example}

\theoremstyle{definition}

\usepackage{graphicx}
\usepackage{enumitem}

\newtheorem{schritt}{Step}

\newcommand{\kz}{\mathbb C}
\newcommand{\rz}{\mathbb R}
\newcommand{\nz}{\mathbb N}
\newcommand{\gz}{\mathbb Z}

\newcommand{\fa}{\text{ } \forall \text{ }}

\newcommand{\supp}{\operatorname{supp}}
\newcommand{\dom}{\operatorname{D}}
\newcommand{\hil}{\mathcal H}

\newcommand{\re}{\operatorname{Re}} 
\newcommand{\im}{\operatorname{Im}} 
\newcommand{\dist}{\operatorname{dist}}
\newcommand{\leos}{\sigma_0}
\newcommand{\rgeom}{\mathfrak r_\mathrm{G}}
\newcommand{\rone}{\mathfrak r_1}
\newcommand{\rtwo}{\mathfrak r_2}
\newcommand{\rthree}{\mathfrak r_3}
\newcommand{\rfour}{\mathfrak r_4}
\newcommand{\rfive}{\mathfrak r_5}
\newcommand{\rsix}{\mathfrak r_6}
\newcommand{\rseven}{\mathfrak r_7}
\newcommand{\reight}{\mathfrak r_8}
\newcommand{\rnine}{\mathfrak r_9}
\newcommand{\rten}{\mathfrak r_{10}}
\newcommand{\rzero}{\mathfrak r_0}

\newcommand{\with}{\text{ with }}
\newcommand{\esgibt}{\exists \text{ }}
\newcommand{\res}{\varrho}

\newcommand{\skp}[3]{\left\langle #1,#2 \right\rangle_{#3}}

\newcommand{\nm}[1]{\hspace{#1}-\hspace{#1}}

\newcommand{\ep}{\varepsilon}
\newcommand{\cp}{c_\mathrm{p}} 
\newcommand{\pot}{\mathcal V} 
\newcommand{\dichte}{\varrho} 
\newcommand{\PP}{\mathbb P}
\newcommand{\form}{\mathfrak h} 
\newcommand{\formk}{\mathfrak k} 
\newcommand{\forms}{{\mathfrak h_\omega^\star}} 

\newcommand{\metrik}{\mathfrak d} 
\newcommand{\we}{w} 
\newcommand{\vol}{\operatorname{vol}} 
\newcommand{\Tr}{\operatorname{Tr}} 


\newcommand{\tr}[1]{\operatorname{tr}_{ #1 }} 
\newcommand{\str}[1]{\operatorname{str}_{#1}} 
\newcommand{\sgn}{\operatorname{sgn}} 
\newcommand{\lok}{_{\mathrm{loc}}}
\newcommand{\komp}{_{\mathrm{comp}}}

\usepackage{bbm}
\newcommand{\ind}{\mathbbmss 1}

\newcommand{\okl}{U}
\newcommand{\ukl}{u}

\newcommand{\ik}[1]{V_{#1,\operatorname{int}}} 
\newcommand{\rk}[1]{V_{#1,\operatorname{\partial}}} 

\newcommand{\raster}{V_{R,\frac{r}{10}}} 

\newcommand{\Lint}{\Lambda^{\operatorname{int}}}
\newcommand{\Lout}{\Lambda^{\operatorname{out}}}

\newcommand{\Cont}{\text{ \raisebox{0.2ex}{$\circ$} $\hspace{-0.965em} \Box$}} 

\newcommand{\rrv}{\ell^2(E_v;\kz)}  

\usepackage{array}
\newcolumntype{C}{>{\rule[-1.5ex]{0pt}{4ex}}c}

\usepackage{colortbl}

\newcommand{\circled}[1]{$\ooalign{\hfil\raise -0.5pt\hbox{#1}\hfil\crcr\mathhexbox20D}$}
\newcommand{\boxd}[1]{   $\ooalign{\hfil\raise 0.5pt\hbox{\small #1}\hfil\crcr\mathhexbox403}$}

\usepackage{tikz}
\usetikzlibrary{patterns}
\usetikzlibrary{decorations.pathreplacing}
\usetikzlibrary{calc}
\newcommand{\knoten}[1]{\fill  #1 circle (0.075cm);}
\newcommand{\knotenn}[1]{\fill (#1) circle (0.075cm);\node[anchor=south] at (#1) {$#1$};}  
\newcommand{\knotenf}[2]{\fill[#2]  #1 circle (0.075cm);}

\usepackage{ifthen, color}
\newboolean{PrintNotes}
\setboolean{PrintNotes}{true}
\newcommand{\note}[1]{
 \ifthenelse{\boolean{PrintNotes}}{
  \textcolor{red}{\itshape #1}
 }{}
}
\usepackage[style=alphabetic,backend=bibtex8,maxnames=5]{biblatex}
\usepackage[]{csquotes}
\bibliography{lit.bib}

\newcommand{\RBPLS}{\hyperlink{BC:PLS}{(BC:P,L,S)}}
\newcommand{\geomalles}{\hyperlink{geom:alles}{(geom:\ensuremath{\ukl},\ensuremath{\okl},poly)}}

\newcommand{\potalles}{\hyperlink{pot:alles}{(pot:char,dens,disord)}} 

\usepackage[linkbordercolor=gray,citebordercolor=black,urlbordercolor=black]{hyperref}

\hypersetup{linkbordercolor=[rgb]{0.85,0.85,0.85}}
\hypersetup{citebordercolor=[rgb]{0.85,0.85,0.85}}
\hypersetup{urlbordercolor=[rgb]{0.85,0.85,0.85}}

\begin{document} 
\newcommand{\dx}{\,dx}

\title{Localization for quantum graphs with a random potential}
\author{Carsten Schubert}

\maketitle

\begin{abstract}
We prove spectral localization for infinite metric graphs with a self-adjoint Laplace operator and a random potential. To do so we adapt
the multiscale analysis (MSA) from the $\rz^d$-case to metric graphs. In
the MSA a covering of the graph is needed which is obtained from a
uniform polynomial growth of the graph. The geometric restrictions of
the graph include a uniform bound on the edge lengths. As boundary
conditions we allow all local settings which give a lower bounded self-adjoint
operator with an associated quadratic form.

The result is spectral localization (i.e. pure point spectrum) with
polynomially decaying eigenfunctions in a small interval at the ground
state energy.
\vspace{8pt}

\noindent
MSC 2010: 82B44, 81Q10, 81Q35, 47B80
\vspace{2pt}

\noindent
Keywords: Anderson localization, quantum graphs, random Schr\"odinger operator, multiscale analysis
\end{abstract}

\section{Introduction}


The theory of transport through media plays an important role in various contexts. In the quantum mechanical treatment via Schr\"odinger operators periodic models often exhibit diffusion and  transport through the media, whereas random disturbances may lead to insulator properties, i.\,e. absence of the transport.
See e.\,g. \cite{Anderson} for one of the first descriptions of this phenomenon, which is called (Anderson) localization and has been widely studied since then. We want to present a proof of localization in the setting of quantum graphs which works in a rather general setting. This means, that we want to impose as few as possible conditions on the geometry of the graphs and the random operators describing the system. 

Quantum graphs, which are metric graphs with a differential operator, are important models in physics and mathematics, see e.\,g. the conference proceedings \cite{BCFK,AGA}. 
We will consider self-adjoint Laplace operators on metric graphs with general boundary conditions as basis of the random Schr\"odinger operator. Boundary conditions for such operators where studied in \cite{KostrykinS-99b, Harmer-08, Kuchment-04, LSV}. As random component we will add a random alloy-type potential on the edges.

Looking at the Schr\"odinger equation the RAGE theorem tells, that bound states of a system in a certain energy region correspond to pure point spectrum of the corresponding Hamiltonian.
Using this, there are two different methods in higher dimensions to prove localization. One is the multiscale analysis introduced by Fr\"ohlich and Spencer in \cite{FS83} and further developed in \cite{FMSS} and \cite{DreifusK}. The first continuous model was treated in \cite{MH}. The other method is the fractional moment method, which was introduced by Aizenman and Molchanov in \cite{AM93}. This method proved to be more elegant in the discrete case, but loses elegancy in the continuous case, which might be found in \cite{AENSS,BNSS}. 

In case of quantum graphs there is one result of delocalization in the literature: 
For a rooted tree graph with Kirchhoff boundary conditions and random edge lengths delocalization was proven under weak disorder in \cite{AizenmanSW}.
Localization was proven for:

\begin{enumerate}
\item 
Radial quantum trees with either random edge lengths or random $\delta$-couplings in \cite{HislopP-09},
\item
a metric graph over $\gz^d$ with Kirchhoff boundary conditions and a random potential in \cite{ExnerHS-07},
\item
$\gz^d$ with random $\delta$-couplings in the vertices in \cite{KloppP-08} and with random edge lengths in \cite{KloppP-09},
\item
a multi particle model over $\gz^d$ with Kirchhoff boundary conditions and a random potential in \cite{Sabri}.
\end{enumerate}
All above methods highly use the symmetry of the graph or the special boundary conditions. 
The method, which we were able to generalize with adaptions is the multiscale analysis presented in \cite{Helm-07}, which was used in \cite{ExnerHS-07} and \cite{KloppP-09}.

The method used here is based on the work \cite{Stollmann-01} and can briefly be divided as follows
\begin{enumerate}
\item[\circled{1} :] 
We have the existence of generalized eigenfunctions of the presented operator, with a known maximal growth rate.
\item[\circled{2} :] 
We can prove decay of the local resolvent depending on the size of the domain in a certain energy interval.
\item[\circled{3} :] With \circled{1} and \circled{2} we can conclude exponential or polynomial decay of the generalized eigenfunctions. Thus we get real eigenfunctions and pure point spectrum in the energy interval from \circled{2}.
\end{enumerate}

Point \circled{2} is the essential part of the multiscale analysis. 
As part of it a covering of the metric space (usually $\gz^d$ or $\rz^d$) is needed, which is commonly done by cubes of different lengths. For an arbitrary metric graph there is no embedding in such a space and thus no easy covering. 
From the conceptual point of view this is one of the main challenges in this model.
We will extract a covering with some kind of balls that will be obtained using a uniform polynomial growth from above for the metric graph.

In the end (in theorem \ref{satz:hauptsatz}) we prove spectral localization for a big class of graphs and random operators in a small energy interval at the ground state energy. Namely, we extend the group of graphs to polynomially growing graphs with uniform bounds of the edge lengths and allow all self-adjoint Laplacians with local boundary conditions, which are lower bounded. Before only $\delta$-type boundary conditions  and graphs over $\gz^d$ and special trees could be treated.

However, the multiscale analysis doesn't provide exponential localization, but only polynomial localization, i.e. polynomially decaying eigenfunctions. For an explanation see remark \ref{bem:pol:decay}.

The results of this paper are essentially included in the thesis \cite{diss_11}, where a uniform polynomial bound of the growth of the volume of the metric graph from above and from below was assumed. The additional uniform bound from below makes some results and bounds on parameters nicer and look more as in the case of $\gz^d$ or $\rz^d$. Here we also made slight improvements in the notation.

The paper is structured as follows:
In section \ref{ab:model} we give the definition and notation of metric graphs and random operators.
In the next section we define finite subgraphs and restrictions of the random operator to them. In section \ref{ab:WG} we define uniform polynomial growth of metric graphs and extract a covering of subgraphs from that.
The next sections cover all necessary estimates for the multiscale analysis, namely we prove a general Combes-Thomas estimates (section \ref{sec:CTE}), Geometric resolvent inequality (section \ref{sec:GRI}), Weyl asymptotics (\ref{sec:WEYL}) and an initial length scale estimate (sec. \ref{ab:ALAW}). Further we state the Wegner estimate from \cite{GruberHV-08} (section \ref{ab:ALAW}) and the existence of generalized eigenfunctions from \cite{LSS-08} in section \ref{ab:VEF}.

In section \ref{ab:msa} we give the adapted multiscale analysis followed by spectral localization in section \ref{ab:SL}. In the last section we discuss our results by examples.

The appendix will show that the induction parameters are well defined.

\bigskip

\textbf{Notation.} For the reader's convenience we list some notations and symbols used in this article and reference where they are defined.\\
\begin{tabular}{lll}
$\Gamma$ &  metric graph & definition \ref{def:mg}\\
$X_\Gamma$ & metric space & section \ref{subsec:mg}\\
\RBPLS & boundary conditions & def. \ref{def:bc}\\
$H^{P,L}, \form_L$ & Laplacian, form & theorem \ref{satz_H_sa}\\
$\pot_\omega$, \potalles & potential & sec. \ref{ab:pot}, assumption \ref{ann_unordnung} \\
$H^{P,L}(\omega), \form_\omega$ & random operator, form & def. \ref{def:ran_op}\\
\geomalles & geometric restrictions & ass. \ref{ass:uU}, def. \ref{def:poly}\\
$\Lambda_r(v)$ & ball with radius $r$ & def. \ref{def:ball}\\
$\Lint(v)_r$, $\Lout_r(v)$  & inner, outer part of a ball & def. \ref{def:intout}\\
$\Cont_i$ & container set & step \ref{schritt1}
\end{tabular}
\bigskip

\textbf{Acknowledgments.} Financial support by the German Research Foundation (DFG) is gratefully acknowledged. The author wants to thank Peter Stollmann, Daniel Lenz, Ivan Veseli\'c and their research groups in Chemnitz and Jena for fruitful discussions and useful hints.


\section{The model}
\label{ab:model}

\subsection{Metric graphs}
\label{subsec:mg}
\begin{defn}
\label{def:mg}
A metric graph is a tuple $\Gamma=(E,V,l,i,j)$ consisting of countable sets of edges $E$ and vertices $V$, a length function $l:E\to (0,\infty]$ giving each edge a length and functions giving each edge a starting point and each finite edge an end point $i:E\to V$, $j:\{ e\in E \with l(e)<\infty \}\to V$.
\end{defn}
With this definition we allow for loops and multiple edges. Additionally we exclude isolated vertices and treat connected graphs only. The graph is assumed to be infinite, as for finite graphs with bounded edge lengths the spectrum of the Laplacian is purely discrete, (see proposition \ref{hsatz:weyl_dir} point 2).
The interval $I_e:=(0,l(e))$ will be identified with each edge $e$. 
With these intervals we define the spaces 
\begin{equation*}
X_E:= \bigcup\limits_{e \in E}\{ e\}\times I_e, \qquad X_\Gamma:=X_E \cup V
\end{equation*}
and a mapping $\metrik:X_\Gamma \times X_\Gamma \to [0,\infty)$ by
\begin{equation*}
\metrik(x,y) = \inf \{|p(x,y)| \with p(x,y) \text{ is a path from }x \text{ to }y\},
\end{equation*}
where $|p(x,y)|$ is the length of the path from $x$ to $y$, which can be computed with the help of the Lebesgue measure on the edges.
For a metric graph with a lower bound of edge lengths the mapping $\metrik$ is a metric and $(X_\Gamma,\metrik)$ is complete.

The geometric property of a lower bound of the edge lengths will be a needed assumption:
\begin{align}
\label{geom:u}
\tag{geom:u}
\exists\  \ukl>0, \text{ s.\,t. } \fa e \in E : \qquad l(e)\geq \ukl.
\end{align}

We denote functions $f:X_E \to \kz$ by $f_e(t):=f(e,t)$.
The underlying Hilbert space is
\begin{equation*}
L^2(X_E):= \bigoplus\limits_{e\in E } L^2(I_e)=
\{f=(f_e)_{e\in E} \with f_e\in L^2(I_e), \sum\limits_{e\in E} \|f_e \|^2_{L^2(I_e)}<\infty  \}
\end{equation*}
with the corresponding Sobolev spaces
\begin{equation*}
W^{1,2} (X_E) := \bigoplus\limits_{e\in E } W^{1,2}(I_e),\qquad W^{2,2}(X_E):= \bigoplus\limits_{e\in E } W^{2,2}(I_e).
\end{equation*}
These spaces are sometimes called decoupled Sobolev spaces, as functions don't need to be continuous in the vertex---which we want to allow, to describe more general boundary conditions.

\begin{defn} 
If a vertex $v$ is a starting or end point of an edge $e$, then $v$ and $e$ are called incident. We will denote this relation by $e\sim v$.

Let $E_v:=\{(e,0) \with v=i(e) \} \cup \{(e,l(e)) \with v=j(e)  \}$ be the set of outgoing and incoming edges incident to $v$. The degree of a vertex is defined by
\begin{equation*}
d_v:= |\{ (e,0) \with v= i(e)\} \cup \{(e,l(e)) \with v=j(e) \}| =|E_v|.
\end{equation*}
\end{defn}

From the Sobolev imbedding theorem (e.g. theorem 4.12 in \cite{AdamsF}) we know that each function in $W^{j+1,2}(0,l)$ has a representative in $C^j(0,l)$ and can be continuously extended to the boundary. Thus we can define the limits
\begin{align*}
f (0) &:=\lim_{t\to 0} f(t) &      f (l) &:=\lim_{t\to l} f (t) &  &\text{for } f\in W^{1,2}(0,l) \text{ and}\\
f' (0) &:=\lim_{t\to 0} f' (t)  &  f' (l)&:=\lim_{t\to l} f' (t) &  &\text{for }f \in W^{2,2}(0,l).
\end{align*}

\begin{defn}
By $\tr{}(f)$ we define the trace of a function $f\in W^{1,2}(X_E)$ to be the vector of all boundary values of $f$ and $\tr{v}(f)$ its restriction to all beginnings/ends of edges incident to $v$:
\begin{equation*}
\tr{}(f)=\left(\left( (f_e(t)\right)_{(e,t)\in E_v}\right)_{v\in V}, \qquad \tr{v}(f):=(f_e(t))_{(e,t)\in E_v}.
\end{equation*}
Analogue we define the signed trace
\begin{equation*}
\str{}(f)=\left(\left( (\sgn(e,t)\,f_e(t)\right)_{(e,t)\in E_v}\right)_{v\in V}, \qquad \str{v}(f):=\left(\sgn(e,t)\,f_e(t)\right)_{(e,t)\in E_v},
\end{equation*}
where $\sgn(e,t)=1$ for $t=0$ and $\sgn(e,t)=-1$ for $t=l(e)$.
\end{defn}
If we look at $\str{}(f')$ the minus sign at the derivatives of the end points give the so called ingoing derivatives (where ingoing refers to the edges). Hence the direction of the edge is neglected.
\subsection{Boundary conditions}
With the boundary values we can set up boundary conditions to find self-adjoint extensions of the symmetric Laplace operator. This was first done by Kostrykin and Schrader in \cite{KostrykinS-99b} for a star graph and then by Kuchment in \cite{Kuchment-04}. We will use boundary conditions based on the ones by Kuchment but use the notation and results from \cite{LSV}.

\begin{defn}
A metric graph $\Gamma$ with a self-adjoint differential operator is called Quantum graph.
\end{defn}

\begin{bem}
Let $\Gamma$ be a metric graph with a lower bound of the edge lengths. 
Then for a function $f\in W^{1,2}(X_E)$ 
\begin{enumerate}
\item $\tr{v}(f)\in \rrv$, $\tr{}(f)\in \bigoplus_{v\in V}\rrv$ and
\item $\str{v}(f)\in \rrv$, $\str{}(f)\in \bigoplus_{v\in V}\rrv$.
\end{enumerate}
holds.
\end{bem}
The proofs of the statements can be found in remark 1.9 and 1.10 in \cite{LSV}. 

\begin{defn}
\label{def:bc}
Let $\Gamma$ be a metric graph. A boundary condition of the form 
\hypertarget{BC:PLS}{(BC:P,L,S)} consists of a pair $(P,L)$ of families: Here $P=(P_v)_{v\in V}$ is a family of orthogonal projections $P_v:\rrv \longrightarrow \rrv$ on closed subspaces of $\rrv$ and $L= ((L_v,\dom(L_v)))_{v\in V}$ a family of self-adjoint operators
\begin{align*}
L_v :\dom(L_v)  \longrightarrow (1-P_v)\left(\rrv\right) \qquad \text{with }\dom(L_v) \subset (1-P_v)\left(\rrv\right),
\end{align*}
where $L_v$ should be uniformly bounded from below by $-S$. This means:
\begin{equation*}
\esgibt S>0 \text{ with } \langle L^-_v x,x\rangle \geq -S\langle x,x\rangle \text{ for all } x \in \dom(L_v) \text{ and }v\in V.
\end{equation*}
Here $L_v^-$ is the negative part of $L_v$ ($L_v^-:=L_v P_{(-\infty,0)}(L_v)$ with the spectral projection on the interval $(-\infty,0)$).

Then the operator $L:=\bigoplus_{v\in V} L_v$, the direct sum of the operators $L_v$, is self-adjoint and bounded from below by $-S$. We denote the associated form to $L$ by $s_L$:
\begin{align*}
\dom(s_L)&=\dom(L^\frac{1}{2})\subset \bigoplus (1-P_v)(\rrv),\\
s_L[x,y]&=\langle Lx,y \rangle = \sum\limits_{v\in V} \langle L_v x_v,y_v \rangle \qquad \text{for all } x \in \dom(L), y\in \dom(s_L).
\end{align*}
\end{defn}

\begin{satz}
\label{satz_H_sa}
Let a metric graph $\Gamma$ with a uniform lower bound of the edge lengths and  a boundary condition of the form \RBPLS\ be given. Then the operator $H^{P,L}$ is self-adjoint, lower bounded and associated to the form $\form_L$, where:
\begin{align*}
\dom(H^{P,L})&=\{ f\in W^{2,2}(X_E) \with \tr{v}(f) \in \dom(L_v), \\
 & \hspace{4.5cm} L_v \tr{v}(f)=(1-P_v) \str{v}(f') \fa v\in V \},\\
H^{P,L}f&=-f'',\\
\dom(\form_L)&=\{ f\in W^{1,2}(X_E)\with \tr{}(f)\in \dom(s_L)\},\\
\form_L[f]&=\|f'\|^2_{L^2(X_E)}+s_L[\tr{}(f)].
\end{align*}
\end{satz}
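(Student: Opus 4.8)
The plan is to use the standard form method: realize $\form_L$ as a densely defined, symmetric, lower semibounded, closed sesquilinear form, apply the first representation theorem to obtain the unique associated self-adjoint operator, and then identify that operator with $H^{P,L}$. The only ingredient genuinely tied to the metric-graph setting is the uniform lower bound \eqref{geom:u} on the edge lengths, which is exactly what makes the trace estimates below uniform over the (infinitely many) edges.

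I would first check the form properties. Density is immediate: functions that are $C_c^\infty$ on finitely many edges and vanish on the rest have zero trace, hence lie in $\dom(\form_L)$, and they are dense in $L^2(X_E)$; symmetry is clear from the definition together with that of $s_L$. For semiboundedness the workhorse is the one-dimensional trace inequality $|g(0)|^2\le\tfrac2\delta\|g\|_{L^2(I_e)}^2+\delta\|g'\|_{L^2(I_e)}^2$ (and the analogous estimate at $t=l(e)$ for finite edges), valid for every $\delta\le\ukl$ since $l(e)\ge\ukl$; summing over $e$ gives $\|\tr{}(f)\|^2\le\tfrac{C}{\delta}\|f\|_{L^2(X_E)}^2+C\delta\|f'\|_{L^2(X_E)}^2$ on $W^{1,2}(X_E)$. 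Since $s_L[\xi]=\langle L\xi,\xi\rangle\ge-S\|\xi\|^2$ because $L$ is bounded below by $-S$ (part of the definition of \RBPLS), choosing $\delta$ small enough to absorb the gradient term yields $\form_L[f]\ge\tfrac12\|f'\|_{L^2(X_E)}^2-c\|f\|_{L^2(X_E)}^2$. So $\form_L$ is lower bounded, and the $W^{1,2}(X_E)$-norm is controlled by the form norm. Closedness follows from this: a form-Cauchy sequence $(f_n)$ converges in $W^{1,2}(X_E)$ to some $f$; its traces converge in $\bigoplus_v\rrv$ by continuity of the trace map (which follows from the same estimate, or the remark preceding Definition \ref{def:bc}); and since $s_L$ is closed (as $L$ is self-adjoint) one reads off $\tr{}(f)\in\dom(s_L)$ and $f\in\dom(\form_L)$ with convergence of the form.

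Kato's first representation theorem then supplies a self-adjoint, lower bounded operator $\tilde H$: $\dom(\tilde H)$ consists of those $f\in\dom(\form_L)$ for which $\varphi\mapsto\form_L[f,\varphi]$ is continuous for the $L^2(X_E)$-norm on $\dom(\form_L)$, and $\tilde Hf$ is the representing vector. To identify $\tilde H$ with $H^{P,L}$, testing $\form_L[f,\varphi]=\langle\tilde Hf,\varphi\rangle$ against $\varphi$ supported in the interior of one edge gives $f_e\in W^{2,2}(I_e)$ with $-f_e''=(\tilde Hf)_e$, hence (summing) $f\in W^{2,2}(X_E)$ and $\tilde Hf=-f''$. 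For arbitrary $\varphi\in\dom(\form_L)$, integrating by parts edgewise produces a boundary term $\langle\str{}(f'),\tr{}(\varphi)\rangle$, where $\str{}(f')\in\bigoplus_v\rrv$ by the same trace remark applied to $f'\in W^{1,2}(X_E)$, and comparison with the form identity leaves $s_L[\tr{}(f),\tr{}(\varphi)]=\langle\str{}(f'),\tr{}(\varphi)\rangle$ for all $\varphi\in\dom(\form_L)$.

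To turn this identity into the boundary conditions I would use that the trace map $W^{1,2}(X_E)\to\bigoplus_v\rrv$ is onto — once more a consequence of \eqref{geom:u}, since prescribed vertex data can be realized by linear (resp.\ exponentially decaying) profiles supported in length-$\ukl$ collars of the vertices whose $W^{1,2}$-norms are summably controlled by the $\ell^2$-norm of the data. Hence $\tr{}(\varphi)$ runs over all of $\dom(s_L)$, and since $\dom(s_L)\subset\bigoplus_v(1-P_v)(\rrv)$ the identity says precisely that the bounded functional $s_L[\tr{}(f),\cdot\,]$ on $\dom(s_L)$ is represented by $((1-P_v)\str{v}(f'))_v$; the first representation theorem for the pair $(s_L,L)$ then forces $\tr{v}(f)\in\dom(L_v)$ with $L_v\tr{v}(f)=(1-P_v)\str{v}(f')$ for every $v$, i.e.\ the conditions defining $\dom(H^{P,L})$, and the reverse inclusion is the same computation run backwards. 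The step I expect to be the main obstacle is exactly this identification: guaranteeing enough test functions (the uniform surjectivity of the trace, again via \eqref{geom:u}), carefully bookkeeping the orthogonal splitting into the $P_v$- and $(1-P_v)$-parts, and tracking the sign conventions of $\str{}$ through the integration by parts — together with the routine but fiddly verification that the constructed test functions really lie in $W^{1,2}(X_E)$ with a bound uniform in the edges.
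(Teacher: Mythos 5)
The paper does not present its own argument for this theorem; it records that the statement ``directly follows from theorems 2.2 and 4.8 in [LSV]'' and stops there, so there is nothing in the text to compare your proof against step by step. What you have written is the argument the paper delegates to that reference, and it is correct: density and symmetry of $\form_L$; lower-boundedness from the one-dimensional trace inequality $|g(0)|^2\le\tfrac{2}{\delta}\|g\|_{L^2(I_e)}^2+\delta\|g'\|_{L^2(I_e)}^2$ (uniform in $e$ because $l(e)\ge\ukl$) combined with $s_L\ge -S$; closedness from the resulting control of the $W^{1,2}$-norm by the form norm together with closedness of $s_L$; Kato's first representation theorem; and identification of the representing operator with $H^{P,L}$ by interior testing, edgewise integration by parts, and surjectivity of $\tr{}:W^{1,2}(X_E)\to\bigoplus_v\rrv$. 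The two points that genuinely use \eqref{geom:u} are the trace inequality and the surjectivity; for the latter, since no vertex-degree bound is assumed here and $\rrv$ may be infinite-dimensional, your collar construction is the right one: the collars at the two ends of each edge can be kept disjoint because $l(e)\ge\ukl$, each bump has $W^{1,2}$-mass comparable to the corresponding $\ell^2$-coefficient with a constant depending only on $\ukl$, and the resulting series converges. The bookkeeping you flag at the end — that $\langle\str{}(f'),\tr{}(\varphi)\rangle=\langle(1-P)\str{}(f'),\tr{}(\varphi)\rangle$ because $\tr{}(\varphi)\in\dom(s_L)\subset\bigoplus_v(1-P_v)\bigl(\rrv\bigr)$, so that the representation theorem for the pair $(s_L,L)$ gives $\tr{v}(f)\in\dom(L_v)$ and $L_v\tr{v}(f)=(1-P_v)\str{v}(f')$ — is exactly the step that converts the weak identity into the boundary conditions, and you handle it correctly. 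In short, you are not taking a different route so much as reconstructing the form-method proof that the paper only cites.
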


The statement directly follows from theorems 2.2 and 4.8 in \cite{LSV}. Note that these are in fact all self-adjoint, lower bounded Laplacians with vertex boundary conditions (see theorem A.6 in \cite{LSV}).

As the first part of the two restrictions in the boundary condition of the operator gives automatically $(1-P_v)\tr{v}(f) = \tr{v}(f) $, we want to read the restrictions successively and omit the projection in the second part. Thus we will consequently write $L_v \tr{v}(f)$ instead of $L_v(1-P_v)\tr{v}(f)$ which is commonly used in the literature.

\subsection{The random operator}
\label{ab:pot}
In our model the randomness will enter by an alloy-type potential.
We define coupling constants $\omega_e$ and single site measures $\nu_e$ for each edge with the following properties:

\begin{enumerate}
\item 
Let $\mu$ be a probability measure on $\rz$ with the Borel $\sigma$-algebra and
bounded support $\supp \mu =[q_-,q_+]$, where $-\infty<q_-<q_+<\infty $. 
Moreover $\mu$  possesses  a bounded density $\dichte_\mu\in L^\infty[q_-,q_+]$ with $ \| \dichte_\mu \|_{L^\infty [q_-,q_+]}=: c_\dichte$.
\begin{flushright} \hypertarget{pot:dichte}{(pot:dens)} \end{flushright}
 \item 
There are real, positive constants $c_-$ and $c_+$ with $c_-\leq c_+$, such that on each edge the single site potentials $\nu_e:I_e \to \rz$ with $\nu_e \in L^\infty(I_e)$ satisfy
\begin{equation}
\tag{pot:char}\label{pot:char}
c_-\ind_{I_e}\leq \nu_e \leq c_+\ind_{I_e}. 
\end{equation}
\end{enumerate}
Let $\Omega:=[q_-,q_+]^E $ and $\PP:=\bigotimes\limits_{e\in E} \mu $ a probability measure on $\Omega$. 
The function ${q_e:\Omega\to \rz}$ yields the coupling constants for the corresponding edge  $q_e(\omega):=\omega_e $.
For each configuration $\omega\in \Omega$ we define a random potential by
\begin{equation}
\label{pot:char,dichte}\tag{pot:char,dens}
 \pot_\omega :=\left( q_e(\omega) \nu_e\right)_{e\in E},
\end{equation}
which acts on $L^2(X_E)$ as multiplication operator.
For all functions $f\in L^2(X_E)$ we have
\begin{align*}
\| \pot_\omega f \|^2_{L^2(X_E)} &=\| \left( q_e(\omega) \nu_e f_e \right)_{e\in E}  \|_{L^2(X_E)}^2 \\
&\leq \sum\limits_{e \in E} \left(\max\{|q_-|,|q_+| \}\right)^2 c_+^2 \int\limits_0^{l(e)} |\ind_{I_e}(x) f_e(x) |^2 \dx\\
&\leq \left(\max\{|q_-|,|q_+| \}\right)^2 \cdot c_+^2 \|f\|^2_{L^2(X_E)}.
\end{align*}
By defining the constant $C_\pot:=\max\{|q_-|,|q_+| \}\cdot c_+ $ we get the estimate
\begin{equation}
\label{gl_pot_stetig} \|\pot_\omega f\|_{L^2(X_E)} \leq C_\pot \|f\|_{L^2(X_E)}.
\end{equation}
Thus the potential is a continuous operator with a uniform bound. This property will be sufficient to prove some of the needed estimates.

We could consider more general potentials, but for simplicity stay with the above definitions, which yield:
The restriction of the random potential on two different edges $\omega_{e_1}\,\nu_{e_1}$, $\omega_{e_2}\,\nu_{e_2}$ are independent.

Now we use the random potential to define a random operator family on the metric graph.
\begin{defn}
\label{def:ran_op}
Let a metric graph $\Gamma$ with \eqref{geom:u}, a random potential $\pot_\omega$  with \eqref{pot:char,dichte} and a boundary condition of the form \RBPLS\ be given.
For all $\omega\in \Omega$ we define the random operator $H^{P,L}(\omega)$ and a  sesquilineaer form $\form_\omega$ by:
\begin{align*}
 H^{P,L}(\omega) &=H^{P,L}+\pot_\omega  &\hspace{-1.2pt}\dom(H^{P,L}(\omega))=\dom(H^{P,L}),\\
\form_\omega[f,g]&=\langle f',g' \rangle +\sum\limits_{v\in V} \langle L_v \tr{v}(f),\str{v}(g)\rangle +\langle \pot_\omega f,g \rangle
&\dom(\form_\omega)=\dom(\form_L).
\end{align*}
\end{defn}
Then $H^{P,L}(\omega)$ is self-adjoint and bounded from below, $\form_\omega$ is a closed sesquilinear form which is bounded from below and by theorem \ref{satz_H_sa} both are associated to each other.

\begin{bem}
\label{bem_meas_op_fam}
We want to comment on the spectrum of the random operator family. Using a theorem of Kirsch and Martinelli \cite{KirschM} one usually concludes at this point deterministic spectrum of the random operator family from measurability and ergodicty.

As the underlying model, the metric graph, doesn't obey any symmetric relations (group structure or translation invariance), the family $(H^{P,L}(\omega))$ of random operators is not ergodic, as in the most cases presented on $\gz^d$ or $\rz^d$. 
With the form criterion of measurability (see proposition 1.2.6 in \cite{Stollmann-01}) we can easily conclude, that it is measurable (see proposition 2.2.3 in \cite{diss_11}), but in general not ergodic. Therefore we don't find a deterministic spectrum and localization statements are not as strong as in the deterministic case. 

The main result (theorem \ref{satz:hauptsatz}) is, that in a certain interval the spectrum of the operator is almost surely pure point, if there is spectrum at all.
\end{bem}
For examples, where the operator family is still ergodic, see theorem \ref{satz_CayleyG} with Cayley Graphs.
For another paper of non-ergodic models see \cite{RM}. 

\section{Induced subgraphs}
\label{ab:ITG}
One main tool in spectral theory on infinite models is the restriction to finite subsets, where the restriction of the operator possesses discrete spectrum. Under some conditions 
we can conclude properties of the unrestricted operator from properties of these restrictions.
Usually cubes (in $\gz^d$ or $\rz^d$) are chosen as domains of the restrictions.
With them a covering or even tiling of the whole space or bigger cubes is easily constructed, also using cubes of different length scales.
In the case of metric graphs there is no analog definition of cubes with similar properties.

Here we can use two different methods, which both rely on Vitali's covering lemma.
One way is using dyadic cubes, which can be found in theorem 11 in \cite{Christ-90}. There the space is tiled with some sets, which have a minimal and a maximal radius, which are relatively far apart. 

Another way is to cover the space with balls, which are not disjoint, but have almost the same radius. One advantage will be the existence of balls for all radii, in contrary to the lattice case, where only cubes with certain length scales can be used.

In this section we will give definitions of subgraphs, restrictions of operators to those and sets, which are used for the covering. The covering itself and the needed geometric properties of the metric graphs will be given in the next section.
\begin{defn}
Let $\Gamma=(E,V,l,i,j)$ be a metric graph and $E_1 \subset E$ be a subset of the edges.
Let $V_{E_1}$ be the set of all initial and end points of the edges from $E_1$:
\begin{equation*}
V_{E_1} := \{v \in V \text{ with } \esgibt e\in E_1 \text{ with } v=i(e) \text{ or }v=j(e) \}.
\end{equation*}
Let $l_{E_1} = l|_{E_1} $, $i_{E_1}=i|_{E_1}$ and $j_{E_1}=j|_{E_1}$ be the restrictions of the functions to $E_1$. 
Then $(E_1,V_{E_1},l_{E_1},i_{E_1},j_{E_1})$ is a metric graph, will be denoted by $\Gamma_{E_1}$ and referred to as the subgraph of $\Gamma$ induced by $E_1$.
\end{defn}
Analogously we define the restrictions of $X_E$ and $X_\Gamma$  to an edge set $E_1$:
\begin{equation*}
X_{E_1}=\bigcup_{e \in E_1} \{e\} \times I_e \qquad X_{\Gamma_{E_1}}=X_{E_1}\cup V_{E_1}.
\end{equation*}

The set of inner vertices of an induced subgraph $\Gamma_{E_1} \subset \Gamma $ will be denoted by $\ik{E_1}$, the set of boundary vertices by $\rk{E_1} $, where
\begin{align*}
v \in \ik{E_1} &\Leftrightarrow \text{for } v \text{ holds }\{e \in E \text{ with } e\sim v  \}\subset E_1,\\
v \in \rk{E_1} &\Leftrightarrow \esgibt e_1 \in E_1 \text{ and }e_2 \in E\setminus E_1 \text{ with } e_1 \sim v, e_2 \sim v.
\end{align*}
We want to comment that this is not the usual definition of inner or boundary points, for sets imbedded in $\rz^d$ or $\gz^d$, but rather concerning the imbedding in the whole graph $\Gamma$.
\begin{defn}
Two induced subgraphs $\Gamma_{E_1}$, $\Gamma_{E_2}$ are called disjoint, if $E_1$ and $E_2$ are disjoint.
Inclusions of subgraphs are also related to the inclusions of the edge sets, i.\,e.
\begin{equation*}
\Gamma_{E_1}\subset \Gamma_{E_2} \Leftrightarrow E_1\subset E_2.
\end{equation*}

\end{defn}
\begin{bem} Obviously it holds:
\begin{itemize}
\item 
The set of vertices $V_{E_1}$ is the disjoint union of $\rk{E_1}$ and $\ik{E_1}$.
\item 
Two disjoint subgraphs have no common edge, but might have a common vertex, which then has to be a boundary vertex of both subgraphs.
\item 
Two restrictions of the negative Laplace operator to two disjoint subgraphs (which will be defined explicitly later) are independent, as the coupling constants are defined on the edges.
\end{itemize}
\end{bem}
In the following we will define induced subgraphs which correspond to neighborhoods of a root vertex with radius $r$ and will be used extensively.
To make this definition meaningful we will make the assumption, that the edge lengths are uniformly bounded from above. As a uniform bound from below is important for the selfadjointness of $H^{P,L}$ we will use those two assumptions:
\begin{ann}
\label{ass:uU}
Let $\Gamma$ be a metric graph, such that the edge lengths are uniformly bounded from below and from above, i.\,e. there are $\ukl$, $\okl$ with $0<\ukl\leq \okl<\infty$, s.\,t.
\begin{equation*}
\label{geom:uU}\tag{geom:\ensuremath{\ukl},\ensuremath{\okl}}
\fa e\in E \text{ we have} \qquad \ukl \leq l(e) \leq \okl.
\end{equation*}
\end{ann}

\begin{defn}
\label{def:ball}
For a metric graph $\Gamma$ with \eqref{geom:uU} 
we denote by $\Lambda_r(v_0)\subset \Gamma$ the subgraph
$\Gamma_{E(v_0,r)}$ induced by  $E(v_0,r)$, where
\begin{align*}
E(v_0,r)=\{ e\in E \text{ with } \esgibt t\in I_e \text{, such that }(e,t)\in B_r(v_0)\}.
\end{align*}
\end{defn}

\begin{figure}[htb]
\centering
\begin{tikzpicture}[scale=0.9]
\xdefinecolor{tugreen}{RGB}{0, 90, 70};
\path (0:0) coordinate (v_0);
\path (0,2) coordinate (v1);\path (2.5,2.3) coordinate (v2);\path (33:4) coordinate (v3);  
\path (33:2) coordinate (v4);\path (42:5.7) coordinate (a);   
\path (40:4.7) coordinate (r1);\path (5:4) coordinate (r2);\path (-10:4) coordinate (r3); 
\draw[thick,tugreen,dashed] (-20:4) arc(-20:70:4);   
\node[tugreen] at (4.4,2.2) {$B_r(v_0)$};    
\draw[thick,blue] (v_0) to [out=190,in=25+180] (v1);\draw[thick,blue] (v1) to [out=25,in=180] (v2);\draw[thick,blue] (v2) to [out=90,in=175] (r1);
\draw[thick,blue] (v2) to [out=0,in=150] (v3);\draw[thick,blue] (v2) to [out=270,in=45] (v4);\draw[blue,thick] (v4) to [out=-35,in=180] (r2);
\draw[blue,thick] (v_0) to [out=10,in=180+45] (v4);\draw[blue,thick] (v_0) to [out=-80,in=180] (r3);
\draw[thick,blue,densely dotted] (v1) to [out=25+90,in=-25] (-0.7,2.9);\draw[thick,gray,densely dotted] (r2) to [out=89,in=180] ($(r2)+(1.3,1.2)$);
\draw[gray,thick] (r2) to [out=20,in=-20] (r3);\draw[thick,gray] (r1) to [out=5,in=265] (a);
\knotenn{v_0}; 
\knoten{(v3)};\knoten{(v1)};\knoten{(v2)};\knoten{(v4)}; 
\knotenf{(r1)}{red};\knotenf{(r2)}{red};\knotenf{(r3)}{red}; 
\knotenf{(a)}{gray}; 
\knotenf{(5.2,0.5)}{red};\node[right,red] at (5.2,0.5) {$\rk{E(v_0,r)}$};
\knoten{(5.2,-0.3)};\node[right] at (5.2,0.2-0.5) {$\ik{E(v_0,r)}$};
\node[blue] at (1,3) {$E(v_0,r)$};
\end{tikzpicture}
\caption{Induced subgraph -- inner and boundary vertices.}
\label{abb:ITG}
\end{figure}
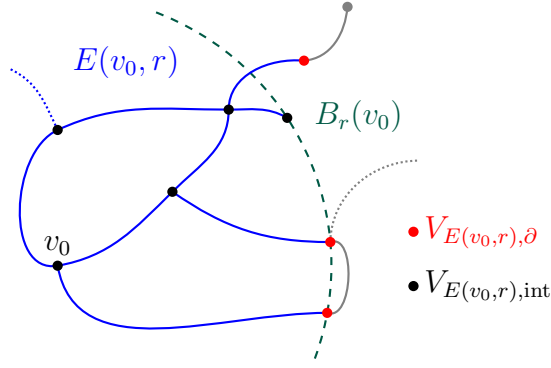

The assumption on the uniform bound of the edge lengths guarantees that all points of $\Lambda_r(v_0)$ have a distance less than $r+\okl$ to the center $v_0$.
In the following this neighborhood will be called ball with radius $r$ and center $v_0$

A central point of the multiscale analysis is estimating what the resolvent of $H^{P,L}$ transports from the interior of some neighborhoods to their boundary.
What we will understand by those sets will be defined now.
\begin{defn}
\label{def:intout}
The interior of a ball $\Lambda_r(v)$ is the ball with radius $\tfrac{r}{3}$:
\begin{equation*}
\Lint_r(v):=\Lambda_\frac{r}{3}(v),
\end{equation*}
the exterior or boundary of a ball is defined as the difference 
\begin{equation*}
\Lout_r(v):=\Lambda_r(v)\setminus \Lambda_{r-3\okl}(v).
\end{equation*}
\end{defn}
The subgraphs $\Lint_r(v)$ and $\Lout_r(v)$ are again induced subgraphs, they are induced by $E\left(v,\tfrac{r}{3}\right)$ and $E(v,r)\setminus E(v,r-3\okl)$.
\begin{defn}
The distance of two subsets $A$ and $B$ of $X_E$ is defined as
\begin{equation*}
\dist(A,B)=\inf_{a\in A}(\inf_{b\in B}\metrik(a,b)).
\end{equation*}
Thereby we can define the distance of two induced subgraphs $\Gamma_{E_1}$ and $\Gamma_{E_2}$ as
\begin{equation*}
\dist(\Gamma_{E_1},\Gamma_{E_2}):=\dist(X_{E_1},X_{E_2}).
\end{equation*}
\end{defn}
The distance of the interior to the exterior of a ball with radius $r$ can be estimated by
\begin{equation*}
\dist(\Lint_r(v),\Lout_r(v)) > (r-3\okl)-\left(\tfrac{r}{3}+\okl\right)=\tfrac{2}{3}r-4\okl,
\end{equation*}
which gives 
\begin{equation}
\label{gl_dist_Lint_Lout}
\dist(\Lint_r(v),\Lout_r(v))> \frac{r}{2}.
\end{equation}
for a big radius $r\geq 24\okl$.

Now we are able to restrict operators  and forms to induced subgraphs.

For a random self-adjoint operator $H^{P,L}(\omega)$ with 
\RBPLS\ and \eqref{pot:char,dichte}
we define the restriction to an induced subgraph $\Gamma_{E_1}=(E_1,V_{E_1},l_{E_1},i_{E_1},j_{E_1})$ as
\begin{align*}
H^{\Gamma_{E_1}}(\omega)f &:=H^{\Gamma_{E_1},P,L}(\omega) f  = (-f''_e)_{e\in E_1} +(q_e(\omega) \nu_e f_e)_{e\in E_1}, \\
\dom(H^{\Gamma_{E_1}}(\omega))&=\{f \in \bigoplus\limits_{e\in E_1} W^{2,2}(I_e) \text{ with } f\text{ satisfies } (P_v,L_v) \text{ b.\,c. of } H^{P,L} \text{ on } \ik{E_1},\phantom\}\\
&\phantom{=\{f \in \bigoplus\limits_{e\in E_1} W^{2,2}(I_e) \text{ with }} \tr{v}(f)\equiv 0 \text{ on } \rk{E_1} \}.
\end{align*}
Which means, that the restriction satisfies the primary boundary condition on the inner vertices of the subgraph and on the boundary vertices Dirichlet boundary conditions.
In particular $H^{\Gamma_{E_1}}(\omega)$ is again self-adjoint and lower bounded.
The associated quadratic form $\form^{\Gamma_{E_1}}_\omega$ is given by
\begin{align*}
 \form^{\Gamma_{E_1}}_\omega[f,f]&=\langle f',f' \rangle_{L^2(X_{E_1})} +\sum\limits_{v \in \ik{E_1}} \langle L_v \tr{v}(f),\tr{v}(f) \rangle+\langle \pot_\omega f,g\rangle_{L^2(X_{E_1})},\\
\dom\left(\form^{\Gamma_{E_1}}_\omega\right)&=\{f\in \bigoplus\limits_{e\in E_1} W^{1,2}(I_e) \text{ with } \tr{v}(f) \in \dom(L_v) \text{ on } \ik{E_1},
 \tr{v}(f)\equiv 0 \text{ on } \rk{E_1} \}.
\end{align*}

\begin{bem}
Let $\Gamma_{E_1}\subset \Gamma_{E_2}$. Then $\dom(\form^{\Gamma_{E_1}}) \subset \dom(\form^{\Gamma_{E_2}})$, as $\tr{v}(f) \equiv 0$ always lies in $\dom(L_v)$.
\end{bem}

For restricting functions to the corresponding function spaces $L^2(X_{E_1})$ we will mainly use characteristic functions $\ind_{X_{E_1}}$.
As we will mostly use induced subgraphs which are balls with radius $r$ at some vertex, we will also denote the characteristic functions in this way: 
\begin{equation*}
\ind_{\Lambda_r(v)}:= \ind_{X_{E(v,r)}}, \hspace{0.4cm} \ind_{\Lint_r(v)}:= \ind_{X_{E(v,\frac{r}{3})}},
\hspace{0.4cm}\ind_{\Lout_r(v)}:= \ind_{X_{E(v,r)\setminus E(v,r-3\okl)}}.
\end{equation*}

In the theory of square integrable functions and Sobolev-spaces smooth functions with compact support are of great importance, e.\,g. as test functions.
For metric graphs it is not clear, how compactness should be defined.
Using $\delta$-type boundary conditions, which yield continuity in the vertices, might lead to stronger definitions.
As we will treat also more general boundary conditions, we also need a more general definition of compactness:

For $\tilde{E}\subset E$ let $\mathcal K_{\tilde{E}}$ be the set of all sets, which will be regarded as substitute of compact sets in $X_{\tilde{E}}$:
\begin{equation*}
\begin{split}
\mathcal K_{\tilde{E}}= \Bigl\{ K\subset X_{\tilde{E}} \with \fa v\in \rk{\tilde{E}}\, \esgibt r>0 : B_r(v)\cap K=\varnothing,\\
K\cup\ik{\tilde{E}} \text{ is compact in }\left(X_{\Gamma_{\tilde{E}}},\metrik\right) \Bigr\}.
\end{split}
\end{equation*}
This definition guarantees, that inner vertices of induced subgraphs are not treated as \enquote{boundary}, but only the boundary vertices of the subgraph.
The set of all compact sets is defined by
\begin{equation*}
\mathcal K := \bigcup\limits_{\tilde{E}\subset E \text{ finite}} \mathcal K_{\tilde{E}}.
\end{equation*}

\begin{defn}
Let $\Gamma$ be a metric graph with \eqref{geom:u}.
We define for all $j\in \nz$ the sets of functions with compact support in $X_E$ by:
\begin{align*}
C\komp^\infty(X_E)&=\left\{ f=(f_e)_{e\in E} \with f_e \in C^\infty(I_e), \supp(f) \in \mathcal K  \right\},\\
L^2\komp(X_E)&=\left\{ f=(f_e)_{e\in E} \with f_e \in L^2(I_e), \supp(f) \in \mathcal K  \right\},\\
W^{j,2}\komp(X_E)&=\left\{ f=(f_e)_{e\in E} \with f_e \in W^{j,2}(I_e), \supp(f) \in \mathcal K  \right\}.
\end{align*}
\end{defn}

With the definition of compact sets, we can define function spaces with locally square integrable functions:
\begin{defn}
Let $\Gamma$ be a metric graph with \eqref{geom:u}
As spaces of locally square integrable functions we define for all $j\in \nz$:
\begin{align*}
L^2\lok(X_E)&=\{ (f_e)_{e\in E} \with f_e \in L^2\lok(I_e), \text{ for all } K\in \mathcal K :
 \ind_K f \in L^2(X_E)   \},\\
W^{j,2}\lok(X_E)&=\{ (f_e)_{e\in E} \with f_e \in W^{j,2}\lok(I_e), \text{ for all } K\in \mathcal K :\\
&\hspace{4.5cm} \ind_K f^{(n)} \in L^2(X_E), n=0,\ldots,j   \}.
\end{align*}
\end{defn}

The product of a compactly supported function $f\in L^2\komp(X_E)$ with a locally square integrable function $g\in L^2\lok(X_E)$ is integrable, i.\,e. in $L^1(X_E)$.
With the integral over this product we can define a mapping similar to the scalar product via
$\langle \cdot | \cdot \rangle:L^2\komp(X_E)\times L^2\lok(X_E)\to \kz$
yielding the value of the integral, the complex number
\begin{equation*}
\langle f | g \rangle := \sum\limits_{e \in E}\int\limits_{I_e} f(x)\overline{g(x)} \dx.
\end{equation*}
Obviously $\langle \cdot |\cdot \rangle$ is sesquilinear. The complex conjugate mapping, mapping $L^2\lok\times L^2\komp\to \kz $ will again be denoted by $\langle \cdot |\cdot\rangle$.
Let $f\in L^2\lok(X_E)$, $g\in L^2\komp(X_E)$. Then
\begin{equation*}
\sum\limits_{e\in E}\int\limits_{I_e} f(x)\overline{g(x)}\dx=\overline{\sum\limits_{e\in E}\int\limits_{I_e} g(x)\overline{f(x)}\dx}=\overline{\langle g|f \rangle}:=\langle f|g\rangle.
\end{equation*}
holds.
Thereby we can define mappings similar to the sesquilinear forms $(\form_\omega,\dom(\form_\omega))$ with boundary conditions \RBPLS\ and potential \eqref{pot:char,dichte}:
$\forms :\dom\komp(\form_\omega) \times \dom\lok(\form_\omega) \to \kz $ is defined by
\begin{align*}
\dom\komp(\form_\omega)&:=\{f=(f_e)_{e\in E}\with f \in W^{1,2}\komp(X_E), \fa v\in V :  \tr{v}(f)\in \dom(L_v) \},\\
\dom\lok(\form_\omega)&:=\{f=(f_e)_{e\in E}\ | \fa e\in E: f_e \in W^{1,2}(I_e), 
\fa v\in V : \tr{v}(f)\in\dom(L_v) \},\\
\forms[f,g]&:=\langle f'|g' \rangle +\sum\limits_{v\in V} \langle L_v\tr{v}(f),\tr{v}(g)\rangle +\langle\pot_\omega f|g\rangle.
\end{align*}
For $g\in\dom\lok(\form_\omega)$ and $f\in \dom\komp(\form_\omega)$ we obviously have:
\begin{equation*}
\langle g'|f' \rangle +\sum\limits_{v\in V} \langle L_v \tr{v}(g),\tr{v}(f)\rangle +\langle\pot_\omega g|f\rangle\\
=\overline{\forms[f,g]}=:\forms[g,f].
\end{equation*}

The concept of local functions will be mainly used for generalized eigenfunctions, which will be introduced in section \ref{ab:VEF}. 
With the help of the generalized scalar products and forms spectral localization will be proved in section \ref{ab:SL}.

\section{Growth, geometry and covering of metric graphs}
\label{ab:WG}
One essential tool of the multiscale analysis is the covering of subgraphs, such as cubes/boxes and an estimate on the number of needed sets. This number should be uniform and polynomially increasing with the radius or length scale of the sets.

Uniform coverings can be deduced from uniform polynomial growth of the graph---which will be presented for metric graphs in this section.

\begin{defn}
The volume of a metric graph $\Gamma=(E,V,l,i,j)$ is the sum of the lengths of all edges in $E$:
\begin{equation*}
\vol({\Gamma_{E}}) :=\sum\limits_{e\in E} l(e).
\end{equation*}
\end{defn}

\begin{defn}
\label{def:poly}
We will call a metric graph $\Gamma$ of uniform polynomial growth of degree $d$ if there is a constant $\cp \in \rz$ and a real number $d$, s.\,t.
\begin{equation}
\label{geom:poly}\tag{geom:poly}
\fa v \in V \text{ and } r\geq \ukl \qquad \vol(\Lambda_r(v)) \leq \cp  \cdot r^d.
\end{equation}
If additionally the assumption \eqref{geom:uU} is satisfied, we will denote the collection of these geometric properties by \hypertarget{geom:alles}{(geom:\ensuremath{\ukl},\ensuremath{\okl},poly)}.
\end{defn}

\begin{bem}
\begin{enumerate}
\item
The definition of uniform polynomial growth only includes a uniform bound of the growth from above. If we assumed also uniform polynomial growth from below we would get even better estimates for the covering, but have a more restrictive geometric property. For results and a multiscale analysis with uniform polynomial growth from below and from above see \cite{diss_11}.
\item
A uniform polynomial growth from below of degree one and with constant one is always given for a connected metric graph.
\item
Polynomial growth of combinatorical graphs can also be defined with the help of the counting measure. 
We want to comment that a uniform bounded vertex degree follows from uniform polynomial growth and that a metric graph with \eqref{geom:uU} is of uniform polynomial growth of degree $d$ iff the corresponding combinatoric graph is of uniform polynomial growth of degree $d$.
See Proposition 2.4.4 \cite{diss_11} for details.
\end{enumerate}
\end{bem}

\begin{bsp}
Let the metric graph, constructed by adding the graph $\nz$ in one of the points of $\gz^d$, be given (edges are defined between vertices with distance one). Then the graph will have a uniform polynomial growth of degree $d$ from above and a uniform polynomial growth of degree one from below.
\end{bsp}

Now we consider a ball with center $v_0$ and radius $R$ and want to cover it by balls of a fixed radius. To achieve this we define a set of centers inspired by Vitali's  covering lemma.
\begin{defn}
Let $\Gamma$ be  metric graph with \geomalles.
By $V_{R,r}(v_0)\subset V$ we denote a set with maximal cardinality, which satisfies the following conditions:
\begin{itemize}
 \item $\Lambda_r(v_i) \cap \Lambda_r(v_j)=\varnothing$ for $ v_i$, $ v_j \in V_{R,r}(v_0)$, $v_i\neq v_j$,
 \item $\Lambda_r(v_i) \subset \Lambda_R(v_0)$ for all $v_i\in V_{R,r}(v_0)$.
\end{itemize}
\end{defn}
The set $V_{R,r}(v_0)$ provides a maximal configuration of disjoint balls with radius $r$, which all lie inside the ball with radius $R$ and center $v_0$.
\begin{lem} \label{hsatz_ueberdeckung}
Let $\Gamma $ be a metric graph with \geomalles. Then
\begin{equation*}
\bigcup_{v\in V_{R,r}(v_0)} \Lambda_{3r+5\okl} (v) \supset \Lambda_R(v_0).
\end{equation*}
\end{lem}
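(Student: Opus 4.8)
The plan is to exploit the maximality of the center set $V_{R,r}(v_0)$ in a standard Vitali-type argument, adapted to the ball geometry of the metric graph. Let $x$ be an arbitrary point of $\Lambda_R(v_0)$, i.e. a point $(e,t)\in X_{E(v_0,R)}$, so by Definition \ref{def:ball} there is some $s\in I_e$ with $\metrik(v_0,(e,s))<R$; since the edge $e$ has length at most $\okl$, every point of $e$ — in particular $x$ and both endpoints of $e$ — lies within distance $R+\okl$ of $v_0$. Pick a vertex $w$ incident to $e$; then $w$ is a candidate center, and I claim $\Lambda_r(w)$ is \emph{not} disjoint from $\bigcup_{v\in V_{R,r}(v_0)}\Lambda_r(v)$ together with failing the containment condition, so by maximality either $\Lambda_r(w)\cap\Lambda_r(v_i)\neq\varnothing$ for some $v_i\in V_{R,r}(v_0)$, or $\Lambda_r(w)\not\subset\Lambda_R(v_0)$.

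Next I would handle the second alternative: if $w$ itself were a valid new center we could enlarge $V_{R,r}(v_0)$, contradicting maximality, \emph{unless} $\Lambda_r(w)\not\subset\Lambda_R(v_0)$. But a short estimate shows $w$, being within $\okl$ of $x\in\Lambda_R(v_0)$, is close to $v_0$, and hence $\Lambda_r(w)\subset\Lambda_{r+R+2\okl}(v_0)$ — this is not automatically contained in $\Lambda_R(v_0)$, so the containment obstruction is real and we cannot simply add $w$. The correct move is to instead work with the intersection alternative: the point is that \emph{either} $w\in V_{R,r}(v_0)$ already (in which case $x\in\Lambda_r(w)$ trivially), \emph{or} $\{w\}\cup V_{R,r}(v_0)$ violates one of the two defining conditions. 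If it violates disjointness, there is $v_i$ with $\metrik(w,v_i)$ small; if it violates containment, we fall back on the fact that $\Lambda_R(v_0)$ can itself be covered by finitely many $\Lambda_r$'s centered at its own vertices, and among those the chosen one must collide with the maximal family. In either resolution I get a center $v_i\in V_{R,r}(v_0)$ with $\metrik(w,v_i)\le 2r+2\okl$ (the two balls $\Lambda_r(w)$, $\Lambda_r(v_i)$ meet, and each has radius-plus-overhang bounded by $r+\okl$).

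Finally I would assemble the radii: from $\metrik(x,w)\le\okl$, $\metrik(w,v_i)\le 2r+2\okl$, and the fact that membership of a point $y$ in $\Lambda_\rho(v_i)$ is implied by $\metrik(v_i,y)<\rho$, I get that $x$ lies within distance $\le 2r+3\okl$ of $v_i$, hence $x\in\Lambda_{2r+3\okl}(v_i)\subset\Lambda_{3r+5\okl}(v_i)$, using $r\ge\ukl>0$ and the obvious monotonicity $\Lambda_{\rho}(v)\subset\Lambda_{\rho'}(v)$ for $\rho\le\rho'$ (in edge-inclusion sense). Since $x\in\Lambda_R(v_0)$ was arbitrary this gives the claimed covering. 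The main obstacle I anticipate is bookkeeping the additive $\okl$-overhangs correctly: distances in the metric $\metrik$ are measured between points of $X_\Gamma$, whereas the subgraphs $\Lambda_r(v)$ are \emph{edge}-induced and hence reach slightly beyond the metric ball $B_r(v)$, so each use of "$(e,t)\in\Lambda_r(v)$" must be converted to a statement about $\metrik(v,\cdot)$ with the $+\okl$ slack, and it is exactly the accumulation of these slacks (one from $x$ to its incident vertex, one or two more from the collision of the two $r$-balls) that the slightly generous radius $3r+5\okl$ is designed to absorb; getting a clean inequality chain without off-by-$\okl$ errors is the part that needs care.
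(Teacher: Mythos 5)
Your argument for the first alternative (disjointness failure) is fine, but the second alternative (containment failure, $\Lambda_r(w)\not\subset\Lambda_R(v_0)$) is not actually resolved, and this is exactly where the substance of the lemma lies. You acknowledge the obstruction is real, but the proposed remedy---"fall back on the fact that $\Lambda_R(v_0)$ can itself be covered by finitely many $\Lambda_r$'s centered at its own vertices, and among those the chosen one must collide with the maximal family''---is circular (it assumes a covering of $\Lambda_R(v_0)$, which is the statement under proof) and does not explain why $\Lambda_r(w)$ should intersect any $\Lambda_r(v_i)$. Nothing prevents $\Lambda_r(w)$ from being disjoint from every ball of the maximal family \emph{and} sticking out of $\Lambda_R(v_0)$; the maximality of $V_{R,r}(v_0)$ gives you no leverage against such a $w$. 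So as written, a point $x$ near the boundary of $\Lambda_R(v_0)$ is simply not shown to be covered.

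The idea you are missing is to choose the candidate new center not adjacent to $x$, but \emph{inward} along a shortest path from $x$ to $v_0$, at distance roughly $r+2\okl$ from $x$ (such a vertex exists since edges have length $\leq\okl$). Call it $\tilde v$. Because $\metrik(\tilde v,v_0)=\metrik(x,v_0)-\metrik(x,\tilde v)\leq(R+\okl)-(r+2\okl)=R-r-\okl$, every point of $\Lambda_r(\tilde v)$ is within $R$ of $v_0$, so $\Lambda_r(\tilde v)\subset\Lambda_R(v_0)$ is automatic and the containment obstruction disappears. Now if $x$ were not covered, one shows $\Lambda_r(\tilde v)$ is also disjoint from every $\Lambda_r(v_i)$, contradicting maximality; otherwise some $\Lambda_r(v_i)$ meets $\Lambda_r(\tilde v)$, and combining $\metrik(x,\tilde v)\leq r+3\okl$ with $\metrik(\tilde v,v_i)<2r+2\okl$ gives $\metrik(x,v_i)<3r+5\okl$, whence $x\in\Lambda_{3r+5\okl}(v_i)$. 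This inward step is what the paper's proof does (it is phrased as a contradiction argument, but the geometry is the same), and it is the precise reason the radius $3r+5\okl$ appears: one $\okl$-overhang less than your bookkeeping suggests from the edge of $x$, but an extra $r+3\okl$ from the step to $\tilde v$. Your proposal, which keeps the candidate center glued to $x$, cannot close this case.
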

\begin{proof}
We assume the contrary: Let $x\in X_{E(v_0,R)} $ lie in none of the balls with radius $3r+5\okl$. Then we have $\metrik(x,v)>3r+5\okl$ for all $ v\in V_{R,r}(v_0)$. On one of the shortest paths from $x$ to $v_0$ there exists a vertex $\tilde{v}\in V$ with distance $\metrik(x,\tilde{v})\in[r+2\okl,r+3\okl]$. By triangle inequality we get for all $y\in \Lambda_r(\tilde{v})$:
\begin{align*}
\metrik(y,v)&\geq \metrik(x,v)-\metrik(x,y) \\
\metrik(y,v)& \geq \metrik(x,v)-\metrik(x,\tilde{v})-\metrik(\tilde{v},y) \\
\metrik(y,v)& \geq 3r+5\okl-(r+3\okl)-(r+\okl)=r+\okl
\end{align*}
for all $v\in V_{R,r}(v_0)$. Which means, that $\Lambda_r(\tilde{v})$ is disjoint to all $\Lambda_r(v)$ with $v\in V_{R,r}(v_0)$. And it also lies inside $\Lambda_R(v_0)$, as
\begin{align*}
\metrik(\tilde{v},v_0)&= \metrik(x,v_0)-\metrik(x,\tilde{v}) \leq (R+\okl)-(r+2\okl)=R-r-\okl\\
\metrik(y,v_0)&\leq \metrik(y,\tilde{v})+\metrik(\tilde(v),v_0)\leq r+\okl+R-r-\okl=R,
\end{align*}
but $\tilde{v}$ is not a vertex from $V_{R,r}(v_0)$:
\begin{equation*}
\metrik(\tilde{v},v)\geq \metrik(x,v)-\metrik(x,\tilde{v})\geq 3r+5\okl-(r+3\okl)=2r+2\okl>0,
\end{equation*}
yielding a contradiction to the maximality of $V_{R,r}(v_0)$.
\end{proof}

Now we can extract the number of smaller balls needed to cover a bigger one, which is important for the multiscale analysis.
\begin{lem}
\label{hsatz_rasterzahl}
Let $\Gamma$ be a metric graph with \geomalles. Then we have
\begin{equation*}
\frac{R}{\cp\,(3r+5\okl)^d}\leq 
| V_{R,r}(v_0)| \leq \cp\, \frac{R^d}{r} \qquad \text{for all } r\geq \ukl, v_0\in V.
\end{equation*}
\end{lem}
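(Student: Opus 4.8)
The plan is to obtain the two bounds separately, each by a volume comparison using the uniform polynomial growth \eqref{geom:poly} together with the trivial lower bound $\vol(\Lambda_\rho(v)) \geq \rho$ (valid since a ball of radius $\rho\geq\ukl$ around any vertex contains at least one edge of length $\geq \rho$ once $\rho$ is not larger than a single edge, and in general contains a path of length $\rho$; more carefully, one uses that along a shortest path emanating from $v$ the ball $\Lambda_\rho(v)$ contains edges of total length at least $\rho$).

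For the \textbf{upper bound}, I would use that the balls $\Lambda_r(v_i)$, $v_i\in V_{R,r}(v_0)$, are pairwise disjoint (no common edges) and all contained in $\Lambda_R(v_0)$. Hence by additivity of the volume over disjoint edge sets and then \eqref{geom:poly},
\begin{equation*}
\sum_{v_i\in V_{R,r}(v_0)} \vol(\Lambda_r(v_i)) = \vol\Bigl(\bigcup_i \Lambda_r(v_i)\Bigr) \leq \vol(\Lambda_R(v_0)) \leq \cp\, R^d.
\end{equation*}
Bounding each summand from below by $\vol(\Lambda_r(v_i)) \geq r$ (using $r\geq\ukl$) gives $|V_{R,r}(v_0)|\cdot r \leq \cp R^d$, i.e. the claimed $|V_{R,r}(v_0)|\leq \cp R^d/r$.

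For the \textbf{lower bound}, I would invoke the covering from Lemma \ref{hsatz_ueberdeckung}: the $|V_{R,r}(v_0)|$ balls $\Lambda_{3r+5\okl}(v_i)$ cover $\Lambda_R(v_0)$, so
\begin{equation*}
\vol(\Lambda_R(v_0)) \leq \sum_{v_i\in V_{R,r}(v_0)} \vol(\Lambda_{3r+5\okl}(v_i)) \leq |V_{R,r}(v_0)|\cdot \cp\,(3r+5\okl)^d,
\end{equation*}
again by \eqref{geom:poly} applied to each $\Lambda_{3r+5\okl}(v_i)$ (note $3r+5\okl\geq\ukl$). Since $\vol(\Lambda_R(v_0))\geq R$, rearranging yields $|V_{R,r}(v_0)| \geq R/(\cp\,(3r+5\okl)^d)$, which is the left-hand inequality.

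The main obstacle I anticipate is not the volume arithmetic but justifying the elementary lower bound $\vol(\Lambda_\rho(v))\geq\rho$ cleanly: since the metric graph is connected and infinite, from any vertex $v$ there is a path of length exactly $\rho$ (built edge by edge along the graph), and $E(v,\rho)$ by definition contains every edge meeting the open ball $B_\rho(v)$, so it contains every edge traversed by that path up to the last one, whose contribution may be partial — one should check that the total length picked up is still at least $\rho$, or more simply argue that $\Lambda_\rho(v)$ contains $B_\rho(v)$ as a subset of $X_E$ and $\vol$ of an induced subgraph dominates the one-dimensional Lebesgue measure of any subset of its edges, which is $\geq \rho$ for a path of length $\rho$. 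A secondary (purely bookkeeping) point is the additivity step in the upper bound: disjointness of the subgraphs $\Lambda_r(v_i)$ means disjoint \emph{edge} sets, so the volumes genuinely add with no double counting, even though the subgraphs may share boundary vertices — which contribute nothing to $\vol$.
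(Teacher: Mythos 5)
Your proof is correct and follows essentially the same route as the paper: disjointness of the $\Lambda_r(v_i)$ plus $\vol(\Lambda_r(v_i))\geq r$ and \eqref{geom:poly} applied to $\Lambda_R(v_0)$ for the upper bound; Lemma~\ref{hsatz_ueberdeckung} plus $\vol(\Lambda_R(v_0))\geq R$ and \eqref{geom:poly} applied to the covering balls for the lower bound. The extra care you devote to the bound $\vol(\Lambda_\rho(v))\geq\rho$ is a point the paper takes for granted (it records elsewhere that a connected metric graph always has uniform polynomial lower growth of degree one with constant one), but your justification via a length-$\rho$ path emanating from $v$ is exactly the right argument.
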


\begin{proof}
Let $V_{R,r}(v_0)\subset V$ as defined above. Then by the last lemma and \eqref{geom:poly} we conclude
\begin{align*}
|V_{R,r}(v_0)| \cdot r \leq \vol\left(\bigcup_{v\in V_{R,r}(v_0)} \Lambda_r(v)  \right) \leq \vol(\Lambda_R(v_0)) \leq \cp \, R^d,\\
R \leq \vol(\Lambda_R(v_0)) 
\leq \vol\left(\bigcup_{v \in V_{R,r}(v_0)} \Lambda_{3r+5\okl}(v)  \right)
\leq |V_{R,r}(v_0)|\cdot\cp\cdot (3r+5\okl)^d,
\end{align*}
which easily yield the assertion.
\end{proof}
The last two lemmas show how to cover balls with radius $R$ by sets with radius $r$, e.\,g.  by choosing the vertex-raster $V_{R,\frac{r}{4}}$ for $r$ big enough ($r\geq 20\okl$).
In the induction process of the multiscale analysis we will cover $\Lambda_R(v_0)$ be interiors of balls with radius $r$, i.\,e. by $\Lint_r(v)$.
Therefore we will choose the raster $V_{R,\frac{r}{10}}$, which is a bit finer as necessary. This yields a few properties, which make calculations easier.

In the multiscale analysis we will construct so-called {\it container sets} (which will also be balls with certain centers and radii), which include all sets with certain properties. The exterior of the container sets can be covered by sets, which are no subsets of the container and thus don't posses the property. The next proposition will show, that this is possible.
\begin{hsatz}
\label{hsatz_containnerrand}
Let $\Gamma$ be a metric graph with \geomalles, $x\in V$ and $r>300\okl $. Let $\Lambda_s(v) \subset \Lambda_R(x)$ with $v\in V_{R,\frac{r}{10}}(x)$.
Then $\Lout_s(v)$ can be covered by sets with radius $\frac{r}{3}$ and centers in $V_{R,\frac{r}{10}}(x)$, which are not subsets of $\Lambda_s(v)$, i.\,e. there is a subset $W\subset V_{R,\frac{r}{10}}(x)$ with:
\begin{align*}
\bigcup_{w\in W} \Lambda_\frac{r}{3}(w)\supset \Lout_s(v) \qquad \text{and} \qquad \fa w\in W: \Lambda_\frac{r}{3}(w)\not\subset \Lambda_s(v).
\end{align*}
\end{hsatz}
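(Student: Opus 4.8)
The plan is to cover $\Lout_s(v)$ directly by the balls $\Lambda_{r/3}(w)$ coming from the raster $V_{R,\frac r{10}}(x)$, using Lemma \ref{hsatz_ueberdeckung} applied with the fine radius $\frac r{10}$: since
\[
\bigcup_{w\in V_{R,\frac r{10}}(x)} \Lambda_{3\cdot\frac r{10}+5\okl}(w)\supset \Lambda_R(x)\supset \Lout_s(v),
\]
and $3\cdot\frac r{10}+5\okl\le \frac r3$ for $r>150\okl$ (which holds since $r>300\okl$), the slightly larger balls $\Lambda_{r/3}(w)$ already cover $\Lout_s(v)$. So if I set $W_0:=\{w\in V_{R,\frac r{10}}(x) : \Lambda_{r/3}(w)\cap \Lout_s(v)\neq\varnothing\}$, then $\bigcup_{w\in W_0}\Lambda_{r/3}(w)\supset \Lout_s(v)$. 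It remains to discard from $W_0$ those $w$ for which $\Lambda_{r/3}(w)\subset \Lambda_s(v)$ and check the remaining family still covers $\Lout_s(v)$.

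First I would record a geometric estimate: if $\Lambda_{r/3}(w)\subset \Lambda_s(v)$, then every point of $\Lambda_{r/3}(w)$ lies in $\Lambda_s(v)$, hence in particular, since $\vol(\Lambda_{r/3}(w))\ge r/3>0$, the ball $\Lambda_{r/3}(w)$ contains points of $\Lambda_s(v)$; more usefully, by the triangle inequality and the bound that all points of a ball $\Lambda_\rho(\cdot)$ lie within $\rho+\okl$ of the center (stated after Definition \ref{def:ball}), $\Lambda_{r/3}(w)\subset \Lambda_s(v)$ forces $w$ to be \enquote{deep} inside $\Lambda_s(v)$, roughly $\metrik(w,v)\le s-\frac r3+O(\okl)$, so in particular $w$ is far (by more than $\approx \frac r3$) from the boundary region defining $\Lout_s(v)=\Lambda_s(v)\setminus\Lambda_{s-3\okl}(v)$. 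Consequently such a $w$ has $\metrik(w,v)\le s-\frac r3+c\okl< s-3\okl$ (using $r>300\okl$), so $\Lambda_{r/3}(w)$ lies inside $\Lambda_{s-3\okl}(v)$ up to a controlled margin — and the point is that the part of $\Lambda_{r/3}(w)$ that can reach $\Lout_s(v)$ is empty. I would make this precise: for $w$ with $\Lambda_{r/3}(w)\subset\Lambda_s(v)$ one in fact gets $\Lambda_{r/3}(w)\cap\Lout_s(v)=\varnothing$, because any point $y\in\Lambda_{r/3}(w)$ satisfies $\metrik(y,v)\le \metrik(y,w)+\metrik(w,v)\le (\tfrac r3+\okl)+(s-\tfrac r3+c\okl)$; getting this $\le s-3\okl$ is exactly where the constant $r>300\okl$ is used, and it shows $y\notin\Lout_s(v)$.

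Hence every $w\in W_0$ with $\Lambda_{r/3}(w)\subset\Lambda_s(v)$ automatically fails $\Lambda_{r/3}(w)\cap\Lout_s(v)\neq\varnothing$, i.e. is not in $W_0$ at all — contradiction. Therefore $W:=W_0$ already satisfies $\Lambda_{r/3}(w)\not\subset\Lambda_s(v)$ for all $w\in W$, and $\bigcup_{w\in W}\Lambda_{r/3}(w)\supset\Lout_s(v)$, which is the claim. So the proof reduces to: (i) the covering from Lemma \ref{hsatz_ueberdeckung} with radius $\frac r{10}$ enlarged to $\frac r3$, valid because $r>300\okl$; and (ii) the elementary triangle-inequality computation showing a $\frac r3$-ball contained in $\Lambda_s(v)$ cannot meet the $3\okl$-thick shell $\Lout_s(v)$.

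The main obstacle is bookkeeping the constants in step (ii): one must be careful that \enquote{$\Lambda_{r/3}(w)\subset\Lambda_s(v)$} is a statement about the induced-subgraph edge sets (Definition \ref{def:ball}), not just about metric balls, so I would translate containment of subgraphs into the metric statement $\metrik(w,\cdot)$-bounds via the \enquote{$+\okl$} slack, and then verify that $\bigl(\tfrac r3+\okl\bigr)+\bigl(s-\tfrac r3+c\okl\bigr)\le s-3\okl$ holds with the $c$ that actually comes out (I expect $c\le 3$ or so, comfortably absorbed by $r>300\okl$). Everything else is a direct application of the two covering lemmas already proved.
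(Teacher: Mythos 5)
Your overall plan—cover $\Lout_s(v)$ using Lemma \ref{hsatz_ueberdeckung} with the fine radius $\frac r{10}$, then check the covering balls escape $\Lambda_s(v)$—is the paper's approach in spirit, but the crucial step 3 is wrong and cannot be repaired by bookkeeping. The claim that $\Lambda_{r/3}(w)\subset\Lambda_s(v)$ forces $\Lambda_{r/3}(w)\cap\Lout_s(v)=\varnothing$ is false. Your own triangle-inequality estimate gives, for $y\in\Lambda_{r/3}(w)$,
\[
\metrik(y,v)\le\bigl(\tfrac r3+\okl\bigr)+\bigl(s-\tfrac r3+c\okl\bigr)=s+(c+1)\okl,
\]
in which the two $\frac r3$'s cancel: the parameter $r$ has vanished, so the hypothesis $r>300\okl$ cannot possibly lower this to $s-3\okl$. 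And conceptually the bound $\metrik(y,v)\le s+O(\okl)$ only re-confirms $y\in\Lambda_s(v)$, which is where $\Lout_s(v)$ lives — it does nothing to keep $y$ out of the shell. A concrete counterexample: take the line graph $\gz$ with unit edge lengths, $v=0$, $s=1000$, $r=400$, $w=866$. Then $E(w,\tfrac r3)$ reaches exactly up to the edge $(999,1000)$, so $\Lambda_{r/3}(w)\subset\Lambda_s(v)$, yet $\Lambda_{r/3}(w)$ contains the edges $(997,998),(998,999),(999,1000)\subset\Lout_s(v)$. Such a $w$ (one whose ball just barely fits inside $\Lambda_s(v)$, i.e.\ $\metrik(w,v)\approx s-\tfrac r3$) generically does reach into the $3\okl$-thin shell; the shell is far thinner than the slack in your estimate.

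The missing idea is that you must not take \emph{all} raster balls meeting $\Lout_s(v)$, but rather, for each $x_0\in\Lout_s(v)$, the \emph{specific} raster ball from Lemma \ref{hsatz_ueberdeckung} whose center $v_0$ lies close to $x_0$: $\metrik(x_0,v_0)<\frac{3r}{10}+6\okl$. Then $\metrik(v,v_0)\ge \metrik(v,x_0)-\metrik(x_0,v_0)\ge s-3\okl-\bigl(\tfrac{3r}{10}+6\okl\bigr)=s-9\okl-\tfrac{3r}{10}$, so $\metrik(v,v_0)+\tfrac r3\ge s-9\okl+\bigl(\tfrac r3-\tfrac{3r}{10}\bigr)=s-9\okl+\tfrac r{30}$, and this exceeds $s+\okl$ exactly when $r>300\okl$. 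The essential gain is the margin $\tfrac r3-\tfrac{3r}{10}=\tfrac r{30}$ between the raster's covering radius and the radius $\tfrac r3$ that you actually get to use; this is the quantity the hypothesis $r>300\okl$ is calibrated against, and it is lost entirely in your step 3 where both appearances of $\tfrac r3$ cancel.
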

\begin{proof}
Let $W$ be the subset of $V_{R,\frac{r}{10}}(x)$, such that $\Lambda_\frac{r}{3}(w)\not\subset \Lambda_s(v)$ for all $w\in W$. 

Let $x_0 \in \Lout_s(v)$. By lemma \ref{hsatz_ueberdeckung} there exists a raster point $v_0 \in V_{R,\frac{r}{10}}(x)$, such that $x_0$ lies in the ball with radius  $\left( 3\tfrac{r}{10}+5\okl\right) $ and center $v_0$. Thus we have
\begin{align}
\notag \metrik(x_0,v_0)&< \frac{3r}{10}+6\okl \qquad and \\
\label{gl_cont_rand}
\metrik(v,v_0)&\geq \metrik(v,x_0)-\metrik(x_0,v_0)\geq s-3\okl-\left(\frac{3r}{10}+6\okl\right)=s-9\okl-\frac{3r}{10}.
\end{align}
The following relations show, that $x_0\in \Lambda_\frac{r}{3}(v_0)$ and $v_0 \in W$:
\begin{itemize}
\item $x_0\in \Lambda_\frac{r}{3}(v_0) $, if $\metrik(x_0,v_0)\leq \frac{r}{3}$. Which is true for $\frac{3r}{10}+6\okl\leq \frac{r}{3}$, i.\,e. $r\geq 180\okl$,
 \item $\metrik(v,v_0)+\frac{r}{3}>s+\okl$ holds with \eqref{gl_cont_rand} if $s-9\okl-\frac{3r}{10}+\frac{r}{3}-s-\okl>0 $, i.\,e. $r>300\okl$.\qedhere
\end{itemize}
\end{proof}

The following remark is used to simplify calculations:
\begin{bem}
\label{bem_ueberdeckung}
Let $\Gamma$ be a metric graph with \geomalles, $\Lambda_R(x)$ and $\Lambda_s(v)$ two balls with $\Lambda_s(v)\subset \Lambda_R(x)$ and $v\in V_{R,\frac{r}{10}}(x)$ with 
$r\geq 180\okl$.
Then $\Lambda_s(v)$ can be covered with sets $\Lambda_{\frac{r}{3}}(v_i)$, such that 
$v_i\in V_{R,\frac{r}{10}}(x)$ and $\metrik(v,v_i)\leq s+\frac{r}{3}$.
\end{bem}
\begin{proof}
Lemma \ref{hsatz_ueberdeckung} and the fine raster show that the statement is true for $r\geq 180 \okl$.
\end{proof}

Some statements in the last two sections are only true for a minimal radius. We will gather all assumptions in one for which all lemmas, propositions and remarks are true:
\begin{equation*}
r> \rgeom=300\okl.
\end{equation*}

\section{Combes-Thomas estimate}
\label{sec:CTE}

The Combes-Thomas estimate is an important estimate and will often be used in the multiscale analysis. It provides an initial exponential decay of the local resolvent, if the energy parameter lies inside a known spectral gap.

The rate of the exponential decay depends on the size of the spectral gap, the distance of the energy parameter to the boundary of the gap and the distance of the two subgraphs between which the resolvent acts.
The constants will not depend on a special choice of the subgraphs or the concrete realization of the random operator, i.\,e. they don't depend on $\omega$.

The estimate goes back to the article \cite{CombesT} and was further improved in \cite{BCH}. The proof presented here uses associated quadratic forms and is based on the proof of theorem 2.4.1 in \cite{Stollmann-01}.
Differences arise from the inner boundary conditions \RBPLS\ in the sesquilinear form and the more restricted domain.

The estimate is proved using sectorial forms and sectorial operators, extending the theory of lower bounded forms and operators. For general theory see \cite{kato}.

The proof of the Combes-Thomas estimate uses perturbation of the original form by a weight function, where the weight function depends on the two subsets between which the resolvent acts.
The weight function will be defined in the following, where we suppress the dependence on the subgraphs, as they will be fixed in the proof.

Let $\Gamma=(E,V,l,i,j)$ be a metric graph with \eqref{geom:u}.
For given sets $Y_1$, $Y_2 \subset X_E$ we define two weight functions $\tilde{w},$ $w:X_E\to [0,\infty)$ by
\begin{align*}
\tilde{w}(x)&=\inf \{\metrik(y,x) \with y\in Y_1 \},\\
w(x)&=\min\left\{ \tilde{w}(x), \inf\{ \tilde{w}(z)\with z\in Y_2 \} \right\}.
\end{align*}
Thus $\tilde{w}(x)$ yields the minimal distance from $x$ to the set $Y_1$ in $X_E$ and $w(x)$ yields the minimal distance from $x$ to $Y_1$, bounded from above by $\dist(Y_1,Y_2)$.

\begin{bem}
The function $w$ is continuous on $X_E$, moreover $w_e \in W^{1,2}(I_e)$ with $|w'|\leq 1$.
\end{bem}

Let $\alpha \in \rz^+$. By $e^{\alpha w}$ we define the multiplication operator, acting as multiplication with $e^{\alpha w(x)} $.
The inverse operator is $e^{-\alpha w}$ and obviously $e^{\alpha w}\phi$, $e^{-\alpha w}\phi \in L^2(X_E) $ for all $\phi \in L^2(X_E) $, as $e^{\alpha w(x)}$, $e^{-\alpha w(x)}\in L^{\infty}(X_E) $.

\begin{bem}
Let $\Gamma$ be a metric graph with \eqref{geom:u}, $H^{P,L}$ an operator with boundary condition of the form \RBPLS.
If $f\in \dom(\form_L)$ we have $ (e^{\alpha w} f) \in \dom(\form_L)$, as $w$ and thus also $e^{\alpha w(x)}$ are bounded, continuous functions on $X_E$. Hence the function is continuous in the vertices.
The boundary values satisfy:
\begin{equation*}
\tr{v}(e^{\alpha w} f)=c(v)\cdot \tr{v}(f),
\end{equation*}
for a real constant $c(v)$, depending only on the vertex $v$. As the operators $P_v$ and $L_v$ are linear $(e^{\alpha w} f)$ satisfies the boundary conditions.
\end{bem}

\begin{satz}
\label{satz_CTA}
Let $\Gamma$ be a metric graph with \eqref{geom:u} and $H^{P,L}(\omega)$ a negative Laplacian of the form \RBPLS\ with a potential $\pot_\omega$ satisfying \eqref{pot:char,dichte}.
Let $\omega\in \Omega$ and $(s,t)$, with $|s|<R$ and $|t|<R$, be a spectral gap of $H^{P,L}(\omega)$.
Let $\lambda\in (s,t)$ and $\eta=\dist\left(\{\lambda\},(s,t)^c\right)$.
Let $\Gamma_{E_1}$ and $\Gamma_{E_2}$ be two induced subgraphs of $\Gamma$ with positive distance
 $\delta=\dist(\Gamma_{E_1},\Gamma_{E_2})$.

Then there are two constants $C_\mathrm{CTA}(R,S,\alpha,C_\pot)$ and $\tilde{C}(R,S,\alpha,C_\pot)$ with:
\begin{equation*}
\label{gl_CTA}
\left\|\ind_{X_{E_1}} \left(H^{P,L}(\omega)-\lambda\right)^{-1} \ind_{X_{E_2}}\right\|\leq C_\mathrm{CTA}\cdot \eta^{-1}\cdot \mathrm{e}^{-\tilde{C} \sqrt{\eta(t-s)}\delta}.
\end{equation*}
\end{satz}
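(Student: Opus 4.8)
The plan is to follow the classical Combes--Thomas scheme adapted to quadratic forms, as in theorem 2.4.1 of \cite{Stollmann-01}, the only new ingredients being the vertex terms $\sum_v \langle L_v\tr{v}(\cdot),\str{v}(\cdot)\rangle$ in $\form_\omega$ and the potential term $\langle \pot_\omega\cdot,\cdot\rangle$. First I would introduce, for $\alpha>0$ to be fixed later, the conjugated form $\form_\omega^{(\alpha)}[f,g]:=\form_\omega[e^{\alpha w}f, e^{-\alpha w}g]$ on $\dom(\form_\omega)$; by the remark preceding the theorem, multiplication by $e^{\pm\alpha w}$ maps $\dom(\form_L)$ into itself and multiplies the boundary values by a positive constant $c(v)$, so the vertex terms are \emph{unchanged} under the conjugation (the factors $c(v)$ and $c(v)^{-1}$ cancel), and the potential term is likewise unchanged since $\pot_\omega$ is a multiplication operator commuting with $e^{\pm\alpha w}$. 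Hence the only effect of the conjugation is in the kinetic term $\langle f',g'\rangle$: writing $(e^{\alpha w}f)'=e^{\alpha w}(f'+\alpha w'f)$ and similarly for $g$, one gets
\begin{align*}
\form_\omega^{(\alpha)}[f,g]=\form_\omega[f,g]+\alpha\langle w'f,g'\rangle-\alpha\langle f',w'g\rangle-\alpha^2\langle |w'|^2 f,g\rangle,
\end{align*}
using $|w'|\le 1$ a.e.\ from the remark on $w$.

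Next I would show that $\form_\omega^{(\alpha)}-\lambda$ is a \emph{sectorial} form which is boundedly invertible, with quantitative control of the inverse. The self-adjoint part of the perturbation, $-\alpha^2\langle |w'|^2\cdot,\cdot\rangle$, is a bounded form perturbation of size $\le\alpha^2$; the antisymmetric part, $\alpha\langle w'f,g'\rangle-\alpha\langle f',w'g\rangle$, is relatively form-bounded with respect to the kinetic energy with relative bound controlled by $\alpha$ times $\|f'\|$. Since $\lambda\in(s,t)$ lies in a spectral gap with $\dist(\lambda,(s,t)^c)=\eta$, the unperturbed form $\form_\omega-\lambda$ satisfies $|\form_\omega[f]-\lambda\|f\|^2|\ge\eta\|f\|^2$ and moreover, because $|s|,|t|<R$ and $L$ is bounded below by $-S$ (hence $\form_\omega$ is bounded below by $-S-C_\pot$ via \eqref{gl_pot_stetig}), one controls $\|f'\|^2$ in terms of $(\form_\omega[f]-\lambda\|f\|^2)$ and $\|f\|^2$ with constants depending only on $R,S,C_\pot$. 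Combining these, for $\alpha\le c_0\sqrt{\eta(t-s)}$ with a suitable absolute small constant $c_0$ (here the $(t-s)$ enters because the numerical range of $(\form_\omega-\lambda)^{-1}$ restricted away from the gap behaves like $\eta^{-1}$ while the kinetic-energy bound improves near the middle of a wide gap), the form $\form_\omega^{(\alpha)}-\lambda$ is sectorial with vertex of its numerical range bounded away from $0$ by $\gtrsim\eta$, so by the Lax--Milgram/sectorial-form theory (see \cite{kato}) its associated operator $H^{P,L}(\omega)^{(\alpha)}-\lambda$ is boundedly invertible with $\|(H^{P,L}(\omega)^{(\alpha)}-\lambda)^{-1}\|\le C_{\mathrm{CTA}}\eta^{-1}$, the constant depending only on $R,S,\alpha,C_\pot$.

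Finally I would unwind the conjugation. By construction $e^{-\alpha w}(H^{P,L}(\omega)-\lambda)^{-1}e^{\alpha w}=(H^{P,L}(\omega)^{(\alpha)}-\lambda)^{-1}$ as operators, so
\begin{align*}
\ind_{X_{E_1}}(H^{P,L}(\omega)-\lambda)^{-1}\ind_{X_{E_2}}=\ind_{X_{E_1}}e^{\alpha w}\,(H^{P,L}(\omega)^{(\alpha)}-\lambda)^{-1}\,e^{-\alpha w}\ind_{X_{E_2}}.
\end{align*}
Now I would choose $Y_1=X_{E_1}$ and $Y_2=X_{E_2}$ in the definition of $w$: then $w\equiv 0$ on $X_{E_1}$, so $\ind_{X_{E_1}}e^{\alpha w}=\ind_{X_{E_1}}$, while on $X_{E_2}$ one has $w\ge\min\{\tilde w,\dist(Y_1,Y_2)\}=\delta$ (since $\tilde w\ge\delta$ there), hence $\|e^{-\alpha w}\ind_{X_{E_2}}\|\le e^{-\alpha\delta}$. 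Taking operator norms and inserting $\alpha=c_0\sqrt{\eta(t-s)}$ yields the claimed bound with $\tilde C=c_0$. The main obstacle is the middle step: verifying sectoriality and the bounded invertibility of the conjugated form \emph{uniformly in $\omega$ and in the subgraphs}, which requires carefully tracking how the spectral-gap hypotheses $|s|,|t|<R$ and the lower bound $-S$ translate into the kinetic-energy control needed to absorb the $\alpha$-perturbations — this is exactly where the restricted domain and the extra vertex terms differ from the scalar $\rz^d$ proof, though as noted those terms are actually invariant under the conjugation so the estimate goes through with the same structure as in \cite{Stollmann-01}.
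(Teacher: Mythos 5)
Your overall strategy coincides with the paper's: conjugate the form by $e^{\pm\alpha w}$, expand via the product rule, observe that the vertex terms and the potential term are invariant under the conjugation, decompose into symmetric and antisymmetric parts, establish sectoriality, and unwind the conjugation. Your observations about the weight function, the invariance of the $L_v$-terms (the real scalars $c(v)$ and $c(v)^{-1}$ cancel in the sesquilinear pairing), and the kinetic-energy control via the uniform lower bound on $L_v$ all match the paper's computations leading to $\tilde{\form}[f,f]\geq\tfrac12\|f'\|^2-C\|f\|^2$ and $|\formk[f,f]|\leq(\tilde{\form}+C_1)[f,f]$.

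The gap is in the invertibility step. You claim that $\lambda$ lying in a spectral gap $(s,t)$ implies the form-coercivity bound $|\form_\omega[f]-\lambda\|f\|^2|\geq\eta\|f\|^2$, and you then close via Lax--Milgram. This is false when $\lambda$ lies strictly inside the gap: take $\lambda=(s+t)/2$ and $f$ an equal superposition of spectral states of the self-adjoint operator near $s$ and near $t$; then $\form_\omega[f]-\lambda\|f\|^2\approx 0$ while $\|f\|=1$, so the numerical range of $\form_\omega-\lambda$ (and of $\form_\alpha-\lambda$) contains $0$ and the form is not coercive. The gap yields only the graph-norm bound $\|(H-\lambda)f\|\geq\eta\|f\|$, which is strictly weaker. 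Your shortcut therefore covers only $\lambda<\inf\sigma(H)$ (where coercivity does hold), but the theorem --- and in particular its use in Step \ref{schritt4} of the induction, with the symmetric gap $(\lambda-\tfrac12 r_i^{-\theta n},\lambda+\tfrac12 r_i^{-\theta n})$ --- requires $\lambda$ in the interior. To invert $H^\alpha-\lambda$ there, one must run the Barbaroux--Combes--Hislop-type argument built on the spectral calculus of the self-adjoint part $\tilde H$: verify that $\tilde H$ retains a slightly shrunk gap around $\lambda$ for small $\alpha$, then conjugate with $|\tilde H-\lambda|^{-1/2}$ and use the spectral projections of $\tilde H$ onto the two sides of the gap to absorb the antisymmetric perturbation; it is this step that actually produces the factor $\sqrt{\eta(t-s)}$. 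The paper's proof stops at the sectoriality bounds above and explicitly defers this last step to Stollmann's theorem 2.4.1 and to the cited thesis; to complete your argument you would invoke that same abstract lemma rather than the coercivity/Lax--Milgram shortcut you sketched.
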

\begin{proof}
We define the form $\form_\alpha $ with $\dom(\form_\alpha)=\dom(\form_\omega) $ by
\begin{equation*}
\form_\alpha[f,g]=\form_\omega\left[e^{-\alpha w}f,e^{\alpha w}g\right] \qquad \text{for all } f,g \in \dom(\form_\omega),
\end{equation*}
where the weight function $w(x)$ is constructed with the sets $Y_1=X_{E_1}$ and $Y_2=X_{E_2}$.
Using the product rule, we have
\begin{align*}
\form_\alpha[f,g]
&=\left\langle e^{-\alpha w}f',e^{\alpha w}g' \right\rangle- \alpha \left\langle e^{-\alpha w}f w',e^{\alpha w}g' \right\rangle
+\alpha \left\langle e^{-\alpha w}f', e^{\alpha w}g w'\right\rangle\\
&\phantom{=}-\alpha^2 \left\langle e^{-\alpha w}fw',e^{\alpha w}gw' \right\rangle
+\sum\limits_{v\in V} \left\langle L_v \tr{v}(e^{-\alpha w}f),\tr{v}(e^{\alpha w}g)   \right\rangle
+\left\langle \pot_\omega e^{-\alpha w}f,e^{\alpha w}g\right\rangle\\
&=\langle f',g' \rangle -\alpha^2\langle fw',gw' \rangle -\alpha \langle fw',g' \rangle +\alpha\langle f',gw' \rangle\\
&\phantom{=}\hspace{0.5cm}
+ \sum\limits_{v\in V} \langle  L_v  \tr{v}(f),\tr{v}(g) \rangle+
\langle \pot_\omega f,g\rangle.
\end{align*}
The real and imaginary (resp. symmetric and antisymmetric) parts of $\form_\alpha$ will be denoted by
\begin{align*}
\tilde{\form}[f,g] &:= \re \form_\alpha[f,g]=\left\langle f',g' \right\rangle -\alpha^2\left\langle fw',gw' \right\rangle + \sum\limits_{v\in V} \left\langle  L_v \tr{v}(f),\tr{v}(g) \right\rangle+\left\langle \pot_\omega f,g\right\rangle,\\
\formk[f,g]&:= \frac{1}{\alpha} \im \form_\alpha[f,g]= i\left\langle fw',g' \right\rangle-i\left\langle f',gw' \right\rangle,
\end{align*}
such that $\form_\alpha=\tilde{\form}+i\alpha\,\formk$.
By a Sobolev theorem and the uniform lower bound of $L_v$ (see \cite{diss_11}) we get 
\begin{equation*}
2S\ep\|f'\|^2+\sum\limits_{v \in V} \langle L_v \tr{v}(f),\tr{v}(f)\rangle\geq -\frac{4S}{\ep}\|f\|^2 \qquad \text{for all } f\in \dom(\form_L)
\end{equation*}
and all $\ep $ with $0<\ep\leq \ukl$. Choosing $\ep $ with $\ep\leq \min \left\{\ukl,\frac{1}{4S}\right\}$ we get:
\begin{align*}
\tilde{\form}[f,f]&\underset{\phantom{\eqref{gl_pot_stetig}}}{=}\|f'\|^2+\sum\limits_{v\in V} \langle L_v \tr{v}(f),\tr{v}(f)\rangle-\alpha^2 \langle fw',fw' \rangle +\langle \pot_\omega f,f\rangle\\
&\underset{\eqref{gl_pot_stetig}}{\geq} \frac{1}{2} \left\|f'\right\|^2 -\left(\frac{4S}{\ep}+\alpha^2+C_\pot\right) \|f\|^2.
\end{align*}
Thus $\tilde{\form}$ is a lower bounded quadratic form. We will denote the associated self-adjoint operator by $\tilde{H}$, for which $\dom(\tilde{H})=\dom(H^{P,L})$ holds.
For the imaginary part we have:
\begin{align}
\notag |\formk[f,f]|&=|\langle fw',f'\rangle- \langle f',fw' \rangle|\\
\notag &\leq  2 |\langle  fw',f'\rangle | \leq 2 \| fw' \| \|f'\|\leq 2 \|f\| \|f'\| \leq \frac{1}{2}\|f'\|^2 +2\|f\|^2  \\
\label{gl_K_norm_1} &\leq (\tilde{\form} +C_1)[f,f]
\end{align}
with a constant $C_1$ with $C_1\geq \frac{4S}{\ukl}+\alpha^2+C_\pot +2$, depending on $S$, $\ukl$ and $C_\pot$.
Thus $\form_\alpha$ is a sectorial form and possesses a unique associated sectorial operator $H^\alpha$.

From here on the proof loses its connection with the special choice of the model and operators and carries on with abstract properties of the forms and operators $\tilde{\form}$, $\form_\alpha$, $\tilde{H}$ and $H^\alpha$ as usual---see theorem 2.5.7 in \cite{diss_11} or \cite{ExnerHS-07} for details.
\end{proof}

For the application in the MSA we will use the following 
\begin{folg}
\label{folg:CTA}
Let $\Gamma$ be a metric graph with \eqref{geom:uU} and $H^{P,L}(\omega)$ an operator with \RBPLS\ and potential \eqref{pot:char,dichte}.
Then for all radii $r_1 \in \rz$ and vertices $x \in V$, such that $H^{\Lambda_{r_1}(x)}(\omega)$ posses a spectral gap $(s,t)$ with
$|s|$, $|t|\leq R$ and $\lambda \in (s,t)$ with $\eta:=\dist(\{\lambda \},(s,t)^c)$ and $\delta:=\dist\left(\Lout_{r_1}(x),\Lint_{r_2}(v)\right)>0$ the Combes-Thomas estimate
\begin{equation*}
\left\|  \ind_{\Lout_{r_1}(x)}\big(H^{\Lambda_{r_1}(x)}(\omega)-\lambda\big)^{-1}\ind_{\Lint_{r_2}(v)}  \right\|
\leq C_\mathrm{CTA}\, \eta^{-1} \, \exp\big(\tilde{C} \, \sqrt{(t-s)\eta} \,\delta\big),
\end{equation*}
holds, where $\tilde{C}$ and $C_\mathrm{CTA}$ don't depend on the radii $r_1$, $r_2$ or the vertices $x$ and $v$.
\end{folg}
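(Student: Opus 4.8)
The plan is to deduce Corollary \ref{folg:CTA} directly from Theorem \ref{satz_CTA} by specializing the two induced subgraphs and the ambient operator. First I would observe that the operator appearing in the corollary is $H^{\Lambda_{r_1}(x)}(\omega)$, i.e.\ the restriction of the random operator to the induced subgraph $\Gamma_{E(x,r_1)}$ with Dirichlet conditions on the boundary vertices. Such a restriction is itself a self-adjoint, lower bounded Laplacian of the form \RBPLS\ (on the subgraph, with the interior boundary conditions kept and Dirichlet boundary conditions added on $\rk{E(x,r_1)}$) together with a potential satisfying \eqref{pot:char,dichte}. Hence Theorem \ref{satz_CTA} applies verbatim with the ambient graph replaced by $\Gamma_{E(x,r_1)}$, provided the relevant edge lengths are uniformly bounded below, which holds because \eqref{geom:uU} is assumed and the subgraph inherits this bound.

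Next I would apply Theorem \ref{satz_CTA} with $\Gamma_{E_1}:=\Lout_{r_1}(x)$ and $\Gamma_{E_2}:=\Lint_{r_2}(v)$, both viewed as induced subgraphs of $\Gamma_{E(x,r_1)}$. The hypothesis $\delta=\dist(\Lout_{r_1}(x),\Lint_{r_2}(v))>0$ is exactly the positive-distance condition required there; the spectral gap $(s,t)$ of $H^{\Lambda_{r_1}(x)}(\omega)$ with $|s|,|t|\le R$ and $\lambda\in(s,t)$ with $\eta=\dist(\{\lambda\},(s,t)^c)$ match the hypotheses of the theorem. The conclusion of Theorem \ref{satz_CTA} then reads
\begin{equation*}
\left\|\ind_{\Lout_{r_1}(x)}\big(H^{\Lambda_{r_1}(x)}(\omega)-\lambda\big)^{-1}\ind_{\Lint_{r_2}(v)}\right\|\leq C_\mathrm{CTA}\cdot\eta^{-1}\cdot\mathrm e^{-\tilde C\sqrt{\eta(t-s)}\,\delta},
\end{equation*}
which is (up to the harmless sign convention in the exponent noted in the corollary) the assertion.

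The one point that genuinely needs attention — and which I expect to be the main obstacle — is the uniformity of the constants $C_\mathrm{CTA}$ and $\tilde C$: the corollary insists they do not depend on $r_1,r_2,x,v$. Looking back at the proof of Theorem \ref{satz_CTA}, the constants are built from $R$, $S$, $\alpha$ and $C_\pot$ only (via the lower bound on $\tilde{\form}$ and the estimate \eqref{gl_K_norm_1} with $C_1\ge \tfrac{4S}{\ukl}+\alpha^2+C_\pot+2$). Crucially, the lower bound $S$ for the boundary operators $L_v$, the constant $C_\pot$ from \eqref{gl_pot_stetig}, and the edge-length bound $\ukl$ from \eqref{geom:uU} are all \emph{uniform} over the graph, hence also over every induced subgraph; the Dirichlet conditions added on $\rk{E(x,r_1)}$ correspond to $L_v=0$, which only improves the bound. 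Therefore one may fix a single $\alpha$ (e.g.\ the value used to optimize the rate, depending only on $R$) and obtain $C_\mathrm{CTA}$, $\tilde C$ as functions of $R,S,C_\pot,\ukl$ alone, independent of the choice of subgraphs. Spelling out this uniformity — i.e.\ checking that nothing in the construction of $\form_\alpha$, $\tilde{\form}$, $\tilde H$ and $H^\alpha$ secretly depends on the size or location of the box — is the only real content of the proof; the rest is a direct citation of Theorem \ref{satz_CTA}.
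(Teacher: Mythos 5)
Your proposal is correct and supplies exactly the specialization argument the paper leaves implicit (the corollary is stated without a proof). The two ingredients you identify are precisely the ones that matter: the restriction $H^{\Lambda_{r_1}(x)}(\omega)$ is again a Laplacian of the form \RBPLS\ on the induced subgraph (Dirichlet at boundary vertices being $P_v=\operatorname{id}$, $L_v=0$, which trivially respects the uniform lower bound $S$), and all constants in Theorem~\ref{satz_CTA} are built only from $R$, $S$, $\ukl$, $C_\pot$ and the free parameter $\alpha$, none of which depend on the choice of $r_1,r_2,x,v$. You also correctly flag the sign misprint in the exponent of the corollary (which should, of course, be $-\tilde C\sqrt{(t-s)\eta}\,\delta$, as in Theorem~\ref{satz_CTA} and as used in the induction start).
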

See also the illustration in figure \ref{fig:folg:cta}.

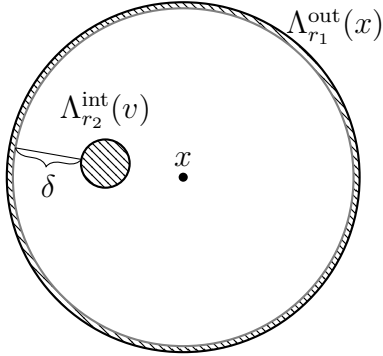
\begin{figure}[!hbt]

\centering
\begin{tikzpicture}[scale=0.8]
\coordinate (x) at (0,0);\coordinate (v) at (170:1.3);
\draw[thick,pattern=north west lines] (x) circle (2.9cm);   
\draw[thick,color=gray,fill=white] (x) circle (2.8cm);   
\draw[thick,pattern=north west lines] (v) circle (0.4cm); 
\knotenn{x}
\node at ($(x)+(2.5,2.5)$) {$\Lout_{r_1}(x)$};
\node[anchor=south] at ($(v)+(0,0.4)$) {$\Lint_{r_2}(v) $};
\draw (170:1.7) -- (170:2.8);
\draw [decorate,decoration={brace,amplitude=5pt}]
(170:1.7)  -- (170:2.8) node [black,midway,below=4pt] { $\delta$};
\end{tikzpicture}
\caption{Illustration Combes-Thomas estimate.}
\label{fig:folg:cta}
\end{figure}

\section{Geometric resolvent inequality}
\label{sec:GRI}
The multiscale analysis is an inductive procedure which gives decay of the resolvent of balls/boxes from one length scale to the next.
Therefore we need a relation of the resolvents of different length scales, which is given by the geometric resolvent inequality (GRI).

We will prove a general version of the GRI and give  a formulation, which can be used in the induction step, in corollary \ref{folg:gru}.

The statements and proofs of this section are based on the same statements in section 2.5 in \cite{Stollmann-01}.
Modifications in the statements and proofs arise from the concrete form of the sesquilinear form and its domain.
We will suppress the dependence on the randomness, as we always can use the uniform estimate $\|\pot_\omega\|\leq C_\pot$.

\begin{hsatz}[Geometric resolvent equation]
\label{satz_RGL}
Let $\Gamma$ be a metric graph satisfying \eqref{geom:uU} and $H^{P,L}(\omega)$ a random operator with boundary conditions of the form \RBPLS\ and a random potential with \eqref{pot:char,dichte}.
Let $\Gamma_{E_1}\subset \Gamma_{E_2}$ be two finite induced subgraphs of $\Gamma$,
$H^{E_1}:=H^{\Gamma_{E_1}}(\omega)$ and $H^{E_2}:=H^{\Gamma_{E_2}}(\omega)$
the restrictions of $H^{P,L}(\omega)$ on the induced subgraphs, $\psi \in C^2_\mathrm{komp}(X_E)$ with $\supp \psi \subset X_{E_1}$
a twice continuously differential, real function, which is constant on an open neighborhood of any vertex in $\Gamma_{E_1}$. Let $z\in \res(H^{E_1})\cap \res(H^{E_2})$. Then
\begin{equation*}
\big(H^{E_1}-z\big)^{-1}\psi=\psi \big(H^{E_2}-z\big)^{-1}+\big(H^{E_1}-z\big)^{-1}\left(\psi' \partial +\partial \psi' \right)\big(H^{E_2}-z\big)^{-1},
\end{equation*}
holds.
\end{hsatz}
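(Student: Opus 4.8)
The plan is to establish the geometric resolvent equation by the standard resolvent-difference trick, using quadratic forms to deal with the boundary conditions carefully. Write $R_i := (H^{E_i}-z)^{-1}$ for $i=1,2$, which are bounded operators since $z$ lies in both resolvent sets. The key observation is that multiplication by $\psi$ maps into the right domains: since $\psi$ is supported in $X_{E_1}$ and is constant near every vertex of $\Gamma_{E_1}$, the function $\psi\phi$ for $\phi \in \dom(H^{E_2})$ lies in $\dom(H^{E_1})$ — the support condition forces the Dirichlet conditions on $\rk{E_1}$ (as $\psi$ vanishes on a neighbourhood of the boundary vertices that were ``cut''), and the locally-constant condition near the inner vertices $\ik{E_1}$ means $\tr{v}(\psi\phi) = \psi(v)\tr{v}(\phi)$, so the $(P_v,L_v)$ conditions are inherited by linearity (exactly as in the weight-function remark preceding Theorem \ref{satz_CTA}).

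First I would compute the commutator $H^{E_1}(\psi\phi) - \psi(H^{E_2}\phi)$ for $\phi \in \dom(H^{E_2})$. On each edge $\psi\phi$ is the same function whether viewed in $X_{E_1}$ or $X_{E_2}$, and $H^{E_i}$ acts as $-(\cdot)'' + \pot_\omega(\cdot)$; the potential term cancels. By the product rule, $-(\psi\phi)'' = -\psi\phi'' - 2\psi'\phi' - \psi''\phi = \psi(-\phi'') - (2\psi'\partial + \psi'')\phi$. So $H^{E_1}(\psi\phi) - \psi H^{E_2}\phi = -(2\psi'\partial + \psi'')\phi$; one rewrites $2\psi'\partial + \psi'' = \psi'\partial + \partial\psi'$ (this last identity being the operator identity $\partial\psi' = \psi'\partial + \psi''$). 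Next I would insert resolvents: apply $R_1$ on the left and evaluate at $\phi = R_2 g$ for $g \in L^2(X_{E_2})$ (or its restriction), giving $R_1(H^{E_1}-z)(\psi R_2 g) - R_1\psi(H^{E_2}-z)R_2 g = -R_1(\psi'\partial + \partial\psi')R_2 g$. The left side simplifies to $\psi R_2 g - R_1\psi g$ after using $(H^{E_i}-z)R_i = \mathrm{id}$ and noting $(H^{E_1}-z)(\psi R_2 g) = H^{E_1}(\psi R_2 g) - z\psi R_2 g = \psi(H^{E_2}-z)R_2 g - (\psi'\partial+\partial\psi')R_2 g = \psi g - (\psi'\partial+\partial\psi')R_2 g$. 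Rearranging yields exactly $R_1\psi = \psi R_2 + R_1(\psi'\partial + \partial\psi')R_2$ (as an identity on the dense set and hence everywhere, with the understanding that the mixed domain/range identifications between $X_{E_1}$ and $X_{E_2}$ are handled by the cutoff $\psi$).

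The main obstacle I expect is the bookkeeping of domains and the rigorous justification that $\psi R_2 g \in \dom(H^{E_1})$ — i.e. that all the boundary conditions really are satisfied, and that the Sobolev regularity is genuinely $W^{2,2}$ so that the pointwise product-rule computation is valid. This is where the hypotheses on $\psi$ (compact support inside $X_{E_1}$, $C^2$, locally constant at vertices) are all used, and where one must be careful that $\partial$ applied to the resolvent output (a $W^{2,2}$ function) lands in $W^{1,2}$, so the term $R_1(\psi'\partial + \partial\psi')R_2$ makes sense as a composition of bounded operators. I would also note, as the excerpt does, that the operator-theoretic content here is the same as in Proposition 2.5.x of \cite{Stollmann-01}, the only genuinely new point being the verification of the vertex conditions under multiplication by $\psi$, which follows from linearity of $P_v$ and $L_v$ together with $\tr{v}(\psi f) = \psi(v)\tr{v}(f)$.
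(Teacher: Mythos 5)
Your proof is correct and arrives at the same identity, but it is organised at the operator level rather than the form level used in the paper. The paper fixes $g$ and $\varphi$, sets $h := \psi(H^{E_2}-z)^{-1}g + (H^{E_1}-z)^{-1}(\psi'\partial+\partial\psi')(H^{E_2}-z)^{-1}g$, and verifies the form identity $(\form^{E_1}-z)[h,\varphi]=\langle\psi g,\varphi\rangle$ by a chain of integrations by parts, making the vertex boundary terms (which vanish because $\psi'$ vanishes near every vertex) appear explicitly. You instead prove the pointwise commutator identity $H^{E_1}(\psi\phi)-\psi H^{E_2}\phi=-(\psi'\partial+\partial\psi')\phi$ for $\phi\in\dom(H^{E_2})$ and then sandwich it with resolvents, which is the more classical resolvent-difference argument. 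Both routes stand or fall on the same key lemma, which you correctly isolate: multiplication by $\psi$ maps $\dom(H^{E_2})$ into $\dom(H^{E_1})$, because $\supp\psi\subset X_{E_1}$ together with local constancy forces the Dirichlet conditions at $\rk{E_1}$, while $\tr{v}(\psi\phi)=\psi(v)\tr{v}(\phi)$ and $\str{v}((\psi\phi)')=\psi(v)\str{v}(\phi')$ at inner vertices (the latter using $\psi'\equiv 0$ near $v$) preserve the $(P_v,L_v)$ conditions by linearity. Your operator-level route is shorter and more transparent once the domain mapping is established; the paper's form-level route is marginally more robust in that it keeps the integration-by-parts boundary bookkeeping visible, which is the step most likely to go wrong for general vertex conditions. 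Either way the proof is complete, so I would only add that the identification between $L^2(X_{E_1})$ and the subspace $\ind_{X_{E_1}}L^2(X_{E_2})$ on which $\psi$ actually acts should be made explicit when passing between $H^{E_1}$ and $H^{E_2}$, as you flag at the end.
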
 
The operator $\partial: W^{1,2}(X_E)\to L^2(X_E)$ maps each function to its first weak derivative. 
Note that by construction of $\psi$ we have $\tr{v}(\psi)\equiv 0$ on $\rk{E_1} $ and $ \tr{v}(\psi)\equiv c_v$, $\str{v}(\psi')\equiv 0 $ on $V_{E_1}$.

\begin{proof}
We will prove the following equivalent equality ($\form^{E_1}:=\form^{\Gamma_{E_1}}$)
\begin{align}
\notag
\langle \psi g,\varphi \rangle=
(\form^{E_1}-z)\Bigl[\underbrace{ \Big( \psi \big(H^{E_2}-z\big)^{-1}+\big(H^{E_1}-z\big)^{-1}\left(\psi' \partial +\partial \psi' \right)\big(H^{E_2}-z\big)^{-1}\Big) g}_{=:h},\varphi \Bigr]
\end{align}
for all $g\in L^2(X_E)$ and all $\varphi \in \dom(\form^{E_1})$.
\begin{itemize}
\item 
Let $f:=\left(H^{E_2}-z\right)^{-1}g$. Now we get $\psi f\in W^{2,2}(X_{E_1})$ from $f\in W^{2,2}(X_{E_1})$ using the product rule and uniforms bounds of $\psi$, $\psi'$ and $\psi''$ in the $L^\infty$-norm. Moreover $\psi f \in \dom(H^{E_1})$ as $\psi$ is supported inside $X_{E_1}$, where $H_1$ and $H_2$ coincide, and the boundary conditions are satisfied as $\tr{v}(\psi f)=c_v\tr{v}(f)$, $\str{v}((\psi f)')=\tr{v}(\psi)\str{v}(f')=c_v\str{v}(f')$.
\item From $\supp \psi' \subset X_{E_1}$ we conclude $\big(H^{E_1}-z\big)^{-1}\left(\psi' \partial +\partial \psi' \right)\big(H^{E_2}-z\big)^{-1} g \in \dom(H^{E_1})$.
\end{itemize}
Thus we know $h\in \dom(H^{E_1})$ and we are able to show the above stated equation (using the above mentioned properties of $\psi$ and linearity of $L_v$, $\pot_\omega$ and partial integration)
{\allowdisplaybreaks 
\begin{flalign*}
&\big(\form^{E_1}-z\big)[h,\varphi]&\\   
&\hspace{0.8cm}=\big(\form^{E_1}-z\big)\Big[ \Big( \psi \big(H^{E_2}-z\big)^{-1}+\big(H^{E_1}
-z\big)^{-1}\left(\psi' \partial +\partial \psi' \right)\big(H^{E_2}-z\big)^{-1}\Big) g,\varphi\Big]&\\
&\hspace{0.8cm}=\left\langle \psi' \big(H^{E_2}-z\big)^{-1}g,\varphi'\right\rangle+\left\langle \psi \Big(\big(H^{E_2}-z\big)^{-1}g \Big)',\varphi'\right\rangle&\\
&\hspace{1.2cm}\phantom{=}\underbrace{+\sum\limits_{v \in V_{E_1}} \left\langle L_v \tr{v}\Big(\big(H^{E_2}-z\big)^{-1}g\Big),\tr{v}(\psi \varphi)\right\rangle +\left\langle (\pot_\omega-z)\big(H^{E_2}-z\big)^{-1}g,\psi \varphi \right\rangle}_{=:\, \mathcal C}&\\
&\hspace{1.2cm}\phantom{=}+\left\langle (\psi' \partial +\partial \psi' )\big(H^{E_2}-z\big)^{-1} g,\varphi\right\rangle&\\
&\hspace{0.8cm}= \left\langle \psi' \big(H^{E_2}-z\big)^{-1}g,\varphi'\right\rangle+\left\langle \Big(\big(H^{E_2}-z\big)^{-1}g \Big)',\psi \varphi'\right\rangle +\mathcal C+\left\langle \partial \big(H^{E_2}-z\big)^{-1}g,\psi'\varphi\right\rangle&\\
&\hspace{1.2cm}\phantom{=}  -\left\langle \psi'\big(H^{E_2}-z\big)^{-1}g,\varphi'\right\rangle+\underbrace{\sum\limits_{e\in E} (\psi' \big(H^{E_2}-z\big)^{-1}g)_e(x)\cdot \overline{\varphi_e(x)}|_0^{l(e)}}_{=0\text{, as }\psi_e'(0)=\psi_e'(l(e))= 0, \text{ for all } e\in E}&\\
&\hspace{0.8cm}=\left\langle \Big( \big(H^{E_2}-z\big)^{-1}g \Big)',(\psi \varphi)' \right\rangle+\left\langle (\pot_\omega-z)\big(H^{E_2}-z\big)^{-1}g,\psi \varphi \right\rangle+\mathcal C&\\
&\hspace{0.8cm}= \big(\form^{E_2}-z\big)\left[\big(H^{E_2}-z\big)^{-1}g,\psi \varphi\right]&\\
&\hspace{0.8cm}=\langle \psi g,\varphi \rangle.&
\end{flalign*}}
Hence proving $(H^{E_1}-z)h=\psi g $ for all $g \in L^2(X_E)$.
\end{proof}

\begin{defn} 
Let $\Gamma$ be a metric graph with \eqref{geom:uU} and $\Gamma_{\tilde{E}}$ an induced subgraph. Let $H^{P,L}(\omega)$ be a random operator with boundary conditions of the form \RBPLS\ and potential with \eqref{pot:char,dichte}. We call a function $f$ weak solution of the equation
\begin{equation*}
H^{P,L}(\omega)f=g \qquad \text{on }\Gamma_{\tilde{E}},
\end{equation*}
if $g\in L^2(X_{\Gamma_{\tilde{E}}})$, $f\in \{\psi \in W^{1,2}(X_{\Gamma_{\tilde{E}}}) \with \tr{v}(\psi)\in \dom(L_v) \text{ for all }v\in \ik{\tilde{E}}  \}$ and for all $\varphi \in \dom\left(\form^{\Gamma_{\tilde{E}}}_\omega\right)$ we have:
\begin{equation*}
\langle f',\varphi' \rangle+\sum\limits_{v\in \ik{\tilde{E}}} \langle L_v\tr{v}(f),\tr{v}(\varphi) \rangle+\langle \pot_\omega f,\varphi\rangle=\langle g,\varphi \rangle.
\end{equation*}
\end{defn}

Thus a function will be tested by the map of the associated form to be a weak solution and
has to satisfy the boundary conditions of the form at the inner vertices, but no boundary conditions at the boundary, as the test funtctions are zero there.
For weak solutions we can prove 

\begin{lem}[Caccioppoli inequality] 
\label{hsatz_ru2}
Let $\Gamma$ be a metric graph with \eqref{geom:uU} and bounded vertex degree. Let $H^{P,L}(\omega)$ be a negative Laplacian satisfying \RBPLS\ and \eqref{pot:char,dichte}.
Let $\Gamma_{E_3}\subset \Gamma_{E_4} $ be induced subgraphs of $\Gamma$ with $\rk{E_3}\subset \ik{E_4} $ and let $g\in L^2(X_{E_4})$. 
Then there exists a constant $C_\mathrm{CP}=C_\mathrm{CP}(\ukl,S,q_-,q_+,c_+)$, such that for all weak solutions $f$ of $H^{P,L}(\omega)f=g$ on $\Gamma_{E_4}$ we have
\begin{equation*}
\|f'\|_{L^2(X_{E_3})}  \leq C_\mathrm{CP}\left( \|f\|_{L^2(X_{E_4})}+ \|g\|_{L^2(X_{E_4})} \right).
\end{equation*}
\end{lem}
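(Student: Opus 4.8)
The plan is to use the standard Caccioppoli trick: test the weak equation against $\eta^2 f$ for a suitable cutoff function $\eta$, and absorb the resulting $\|f'\|^2$ term on the left. First I would choose a real cutoff function $\eta \in C^\infty_{\mathrm{comp}}(X_E)$ with $0 \le \eta \le 1$, which equals $1$ on $X_{E_3}$, is supported in $X_{E_4}$, is constant in a neighbourhood of every vertex of $\Gamma_{E_4}$ (so that $\eta^2 f$ lies in $\dom(\form^{\Gamma_{E_4}}_\omega)$ and the inner-vertex boundary conditions are respected, exactly as for $\psi$ in Proposition \ref{satz_RGL}), and has $|\eta'| \le C$ with $C$ depending only on $\ukl$ (such a function exists by the uniform lower edge-length bound \eqref{geom:uU}; one builds it edge by edge, using that each edge has length at least $\ukl$ and the vertex degree is bounded). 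The hypothesis $\rk{E_3} \subset \ik{E_4}$ is what makes $\eta$ constant near the boundary vertices of $\Gamma_{E_3}$ available while keeping $\eta$ supported away from $\rk{E_4}$.

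Next I would insert $\varphi = \eta^2 f$ into the weak-solution identity. Using the product rule, $\langle f', (\eta^2 f)'\rangle = \langle f', \eta^2 f'\rangle + 2\langle f', \eta \eta' f\rangle = \|\eta f'\|^2 + 2\langle \eta f', \eta' f\rangle$. The vertex term $\sum_{v\in\ik{E_4}} \langle L_v \tr{v}(f), \tr{v}(\eta^2 f)\rangle$ equals $\sum_v c_v^2 \langle L_v \tr{v}(f), \tr{v}(f)\rangle$ where $c_v = \eta(v)^2 \in [0,1]$; this is bounded below by $-S \sum_v c_v^2 |\tr{v}(f)|^2$, and via the same Sobolev trace estimate already invoked in the proof of Theorem \ref{satz_CTA} (for all $\ep \le \ukl$, $2S\ep\|(\eta f)'\|^2 + \sum_v\langle L_v\tr{v}(\eta f),\tr{v}(\eta f)\rangle \ge -\tfrac{4S}{\ep}\|\eta f\|^2$) one controls it by a small multiple of $\|\eta f'\|^2$ plus a large multiple of $\|f\|^2_{L^2(X_{E_4})}$. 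The potential term is handled by the uniform bound $\|\pot_\omega\| \le C_\pot$, giving $|\langle \pot_\omega f, \eta^2 f\rangle| \le C_\pot \|f\|^2_{L^2(X_{E_4})}$, and the right-hand side by Cauchy--Schwarz, $|\langle g, \eta^2 f\rangle| \le \|g\|_{L^2(X_{E_4})}\|f\|_{L^2(X_{E_4})}$. Collecting terms and using Young's inequality on $2\langle \eta f', \eta' f\rangle$ to split off $\tfrac14\|\eta f'\|^2$, I would obtain $\|\eta f'\|^2 \le C'(\|f\|^2_{L^2(X_{E_4})} + \|g\|^2_{L^2(X_{E_4})})$ with $C'$ depending only on $\ukl, S, q_-, q_+, c_+$ (the latter three through $C_\pot$). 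Since $\eta \equiv 1$ on $X_{E_3}$, this gives $\|f'\|_{L^2(X_{E_3})}^2 \le C'(\|f\|_{L^2(X_{E_4})}^2 + \|g\|_{L^2(X_{E_4})}^2)$, and taking square roots yields the claimed inequality with $C_\mathrm{CP} = \sqrt{2C'}$.

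The main obstacle I anticipate is not the algebra but the careful treatment of the vertex (boundary-condition) terms: one must verify that $\eta^2 f$ genuinely belongs to $\dom(\form^{\Gamma_{E_4}}_\omega)$ — in particular that $\tr{v}(\eta^2 f) \in \dom(L_v)$ for $v \in \ik{E_4}$ (true because $\eta$ is constant near each vertex, so $\tr{v}(\eta^2 f)$ is a scalar multiple of $\tr{v}(f)$) and that $\tr{v}(\eta^2 f) \equiv 0$ on $\rk{E_4}$ (true because $\supp\eta \subset X_{E_4}$ stays away from $\rk{E_4}$) — and then that the indefinite sign of $L_v$ does not spoil the absorption argument. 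This last point is exactly where the uniform lower bound $-S$ on the $L_v$ and the Sobolev trace inequality enter, and it is the step that distinguishes this metric-graph version from the classical $\rz^d$ Caccioppoli estimate; everything else follows the template in \cite{Stollmann-01}.
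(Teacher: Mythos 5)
Your proposal follows the same route as the paper: test the weak identity against $\eta^2 f$ (the paper uses $\phi=\psi^2 f$), handle the vertex terms via the uniform lower bound $-S$ on $L_v$ together with the Sobolev trace inequality already used in the Combes--Thomas argument, and absorb the resulting $\|\eta f'\|^2$ contribution. The only cosmetic divergence is at the end: you absorb via Young's inequality, whereas the paper reads the resulting estimate as a quadratic inequality in $\|\psi f'\|$ and bounds its roots from above; these are equivalent. (Your insistence that the cutoff be constant near the vertices is slightly stronger than what the paper uses here — it only needs $\tr{v}(\psi)$ to be a constant vector at each inner vertex — but this is harmless.)
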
 

\begin{proof} 
By definition of the subgraphs there exists a cut-off function $\psi$ with $0\leq \psi\leq 1$, $\psi|_{X_{E_3}} \equiv 1$, $\psi|_{X_E \setminus X_{E_4}}\equiv 0$, $\psi\in C^\infty(X_E) $ (i.\,e. also smooth in the vertices), $\tr{v}(\psi)\equiv0$ on $\rk{E_4}$, $\tr{v}(\psi)\equiv 1 $ on $\ik{E_4} $ and with a uniform bound on $\|\psi'_e\|_\infty$ for all edges $e$, depending only on $\ukl$.

Let $g\in L^2(X_{E_4})$, $f$ be as in the theorem and $\phi:=f\psi^2 $. Then $\phi\in \dom(\form^{E_4})\subset W^{1,2}(X_{E_4})$.
By the condition of the theorem and the product rule we get (with scalar products and norms taken in $X_{E_4}$ if not stated otherwise)
\begin{align*}
\|\psi f'\|^2 &\underset{\phantom{\text{Vor.}}}{=}\langle \psi f',\psi f' \rangle = \langle f',\phi' \rangle -2 \langle \psi f',f\psi' \rangle\\
&\hspace{-9pt}\underset{\text{weak sol.}}{=} \langle g, \phi \rangle -\sum\limits_{v\in\ik{E_4}}\langle L_v\tr{v}(f),\tr{v}(\phi)\rangle_{\rrv}-\langle \pot_\omega f,\phi \rangle -2\langle \psi f',f\psi' \rangle\\
&\underset{\phantom{\text{Vor.}}}{\leq}  \|g\|\|f\|+ C_\pot \|f\|^2 +2 \|\psi'\|_\infty \|f\| \|\psi f'\|-\sum\limits_{v\in \ik{E_4}}\langle L_v \tr{v}(\psi f),\tr{v}(\psi f)\rangle_{\rrv}.
\end{align*}
Using the uniform bound $S$ of the operators $L_v$ and a uniform Sobolev inequality (e.g. see equation (4) in \cite{LSV})
we get on the subgraph induced by $E_4$
\begin{align*}
\sum\limits_{v\in \ik{E_4}} \langle L_v (\psi f)(v),(\psi f)(v)\rangle_{\rrv}
\geq -2S\left( \frac{2}{\ep}\|f\|^2+2\ep \| \psi f' \|^2+2\ep\|\psi'f \|^2 \right)
\end{align*}
for all $\ep$ with $0<\ep\leq \ukl$. Choosing $\ep$, such that $1-4S\ep\geq \frac{1}{2}$, i.\,e. $\ep :=\min \{\ukl,\frac{1}{8S} \}$ yields
\begin{align*}
\|\psi f'\|^2 \leq 2\|g\|\|f\|+ \underbrace{2\left(C_\pot+\frac{4S}{\ep}+4S\ep\|\psi'\|_\infty\right)}_{=:\widehat{C}}\|f\|^2 +4 \|\psi'\|_\infty \|f\| \|\psi f'\|,
\end{align*}
which can be read as a quadratic inequality in $x=\psi f'$. It is satisfied between its two roots. Estimating the roots from above we get
\begin{align*}
\|f'\|_{L^2(X_{E_3})} &\leq \|\psi f'\|_{L^2(X_{E_4})}\leq \widetilde{C}\|f\|_{L^2(X_{E_4})} \hspace{-1.5pt}+\hspace{-1.5pt} \sqrt{\widetilde{C}^2+\widehat{C}} \|f\|_{L^2(X_{E_4})} \hspace{-1.5pt}+\hspace{-1.5pt}\frac{1}{ \sqrt{\widetilde{C}^2+\widehat{C}}} \|g\|_{L^2(X_{E_4})},
\end{align*}
where $\widetilde{C}=2\|\psi'\|_\infty$ (for details see theorem 2.6.3 in \cite{diss_11}).
\end{proof} 

\begin{satz}[Geometric resolvent inequality] 
\label{satz_GRU}
Let $\Gamma$ be a metric graph with finite vertex degree and \eqref{geom:uU}. Let $H^{P,L}(\omega)$ be a random Laplacian with \RBPLS\ and \eqref{pot:char,dichte}.
Let $\Gamma_{E_1} \subset \Gamma_{E_2}$ be finite induced subgraphs of $\Gamma$, $I$ a bounded interval and
$\lambda\in \res(H^{E_1})\cap \res(H^{E_2})\cap I $.
Let $\Gamma_{E_A}\subset \Gamma_{E_1}$ and $\Gamma_{E_B} \subset \Gamma_{E_2} \setminus \Gamma_{E_1}$ be induced subgraphs.
If there exists a function $\varphi\in C^\infty_\mathrm{comp}(X_{E_1}) $ with $\varphi $ is constant on a neighborhood of any vertex in
$V_{E_1} $, identically zero at any boundary vertex of $\Gamma_{E_1} $, $\varphi|_{X_{E_A}}\equiv 1 $ and there exist two induced subgraphs $\Gamma_{E_3}\subset \Gamma_{E_4}\subset \Gamma_{E_1}$, such that $\supp \varphi' \subset X_{E_3}$, $\rk{E_3}\subset \ik{E_4}$ and $\Gamma_{E_A}$ and $\Gamma_{E_4}$ are disjoint, then there exists a constant $C_\mathrm{GRU}$, depending only on $S$, $\ukl$, the constants of the potential and the interval $I$, such that 
\begin{flalign*}
&\left\| \ind_{X_{E_B}} \big(H^{E_2}-\lambda\big)^{-1} \ind_{X_{E_A}} \right\| 
\leq C_\mathrm{GRU} \left\| \ind_{X_{E_B}} \big(H^{E_2}-\lambda\big)^{-1} \ind_{X_{E_4}} \right\|
\left\| \ind_{X_{E_4}} \big(H^{E_1}-\lambda\big)^{-1} \ind_{X_{E_A}} \right\|&
\end{flalign*}
holds.
\end{satz}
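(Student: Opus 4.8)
The plan is to derive the geometric resolvent inequality from the geometric resolvent equation (Proposition \ref{satz_RGL}) combined with the Caccioppoli inequality (Lemma \ref{hsatz_ru2}). First I would apply Proposition \ref{satz_RGL} to the pair $\Gamma_{E_1}\subset\Gamma_{E_2}$ with the cut-off function $\varphi$ (which by hypothesis is smooth, compactly supported in $X_{E_1}$, constant near every vertex of $V_{E_1}$, zero at the boundary vertices of $\Gamma_{E_1}$, and identically $1$ on $X_{E_A}$). This yields
\begin{equation*}
\big(H^{E_1}-\lambda\big)^{-1}\varphi=\varphi\big(H^{E_2}-\lambda\big)^{-1}+\big(H^{E_1}-\lambda\big)^{-1}\left(\varphi'\partial+\partial\varphi'\right)\big(H^{E_2}-\lambda\big)^{-1}.
\end{equation*}
Rearranging, multiplying on the left by $\ind_{X_{E_B}}$ and on the right by $\ind_{X_{E_A}}$, and using that $\varphi\equiv1$ on $X_{E_A}$ while $\Gamma_{E_B}\subset\Gamma_{E_2}\setminus\Gamma_{E_1}$ forces $\ind_{X_{E_B}}\varphi\big(H^{E_2}-\lambda\big)^{-1}\ind_{X_{E_A}}$ and $\ind_{X_{E_B}}\big(H^{E_1}-\lambda\big)^{-1}$ to be handled carefully — in fact $\varphi$ vanishes on $X_{E_B}$ and $\big(H^{E_1}-\lambda\big)^{-1}$ lives on $X_{E_1}$ which is disjoint from $X_{E_B}$ — the only surviving term is
\begin{equation*}
\ind_{X_{E_B}}\big(H^{E_2}-\lambda\big)^{-1}\ind_{X_{E_A}}=-\ind_{X_{E_B}}\big(H^{E_2}-\lambda\big)^{-1}\left(\varphi'\partial+\partial\varphi'\right)\big(H^{E_1}-\lambda\big)^{-1}\ind_{X_{E_A}}.
\end{equation*}
(I will need to keep track of signs and of which resolvent appears where; the identity in Proposition \ref{satz_RGL} gives $(H^{E_1}-z)^{-1}$ on the outer factor, so one solves for the cross term accordingly.)

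Next I would insert $\ind_{X_{E_3}}$: since $\supp\varphi'\subset X_{E_3}$, the operator $\varphi'\partial+\partial\varphi'$ factors through $\ind_{X_{E_3}}$, so the right-hand side equals $-\ind_{X_{E_B}}\big(H^{E_2}-\lambda\big)^{-1}\ind_{X_{E_3}}\left(\varphi'\partial+\partial\varphi'\right)\ind_{X_{E_3}}\big(H^{E_1}-\lambda\big)^{-1}\ind_{X_{E_A}}$. Now I would estimate the norm. The factor $\varphi'\partial+\partial\varphi'$ is first order, so the key point is to control $\ind_{X_{E_3}}\partial\big(H^{E_1}-\lambda\big)^{-1}\ind_{X_{E_A}}$. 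Set $f:=\big(H^{E_1}-\lambda\big)^{-1}\ind_{X_{E_A}}h$ for $h\in L^2$; then $f$ is a weak solution of $H^{P,L}(\omega)f=\lambda f+\ind_{X_{E_A}}h$ on $\Gamma_{E_1}$, hence on $\Gamma_{E_4}$ (here the hypotheses $\Gamma_{E_3}\subset\Gamma_{E_4}\subset\Gamma_{E_1}$ and $\rk{E_3}\subset\ik{E_4}$ are exactly what Lemma \ref{hsatz_ru2} requires). Caccioppoli gives
\begin{equation*}
\|f'\|_{L^2(X_{E_3})}\leq C_\mathrm{CP}\left(\|f\|_{L^2(X_{E_4})}+\|\lambda f+\ind_{X_{E_A}}h\|_{L^2(X_{E_4})}\right)\leq C_\mathrm{CP}\left((1+|\lambda|)\|f\|_{L^2(X_{E_4})}+\|h\|\right),
\end{equation*}
and since $\Gamma_{E_A}$ and $\Gamma_{E_4}$ are disjoint we have $\|f\|_{L^2(X_{E_4})}=\|\ind_{X_{E_4}}\big(H^{E_1}-\lambda\big)^{-1}\ind_{X_{E_A}}h\|\leq\|\ind_{X_{E_4}}\big(H^{E_1}-\lambda\big)^{-1}\ind_{X_{E_A}}\|\,\|h\|$ — so the $\ind_{X_{E_4}}$ appears with $f$ and not its derivative, which is why $E_4$ (not $E_3$) shows up in the final bound. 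Combining, and absorbing $|\lambda|$ into a constant depending on the bounded interval $I$, one gets $\|\ind_{X_{E_3}}\partial\big(H^{E_1}-\lambda\big)^{-1}\ind_{X_{E_A}}\|\lesssim\|\ind_{X_{E_4}}\big(H^{E_1}-\lambda\big)^{-1}\ind_{X_{E_A}}\|$ plus a harmless zeroth-order term; the zeroth-order piece $\varphi'\cdot\ind_{X_{E_3}}\big(H^{E_1}-\lambda\big)^{-1}\ind_{X_{E_A}}$ is directly dominated by $\|\ind_{X_{E_4}}\big(H^{E_1}-\lambda\big)^{-1}\ind_{X_{E_A}}\|$ since $\Gamma_{E_3}\subset\Gamma_{E_4}$.

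The second factor is then just $\|\ind_{X_{E_B}}\big(H^{E_2}-\lambda\big)^{-1}\ind_{X_{E_3}}\|$, and using $\Gamma_{E_3}\subset\Gamma_{E_4}$ once more (so $\ind_{X_{E_3}}=\ind_{X_{E_3}}\ind_{X_{E_4}}$) this is bounded by $\|\ind_{X_{E_B}}\big(H^{E_2}-\lambda\big)^{-1}\ind_{X_{E_4}}\|$, which is the form appearing in the claim. Collecting the constants into a single $C_\mathrm{GRU}$ depending only on $S$, $\ukl$, the potential constants and the interval $I$ (through the bound on $|\lambda|$ and through $\|\varphi'\|_\infty$, which the paper allows since $\varphi$ can be chosen with such a bound, cf. the cut-off construction in Lemma \ref{hsatz_ru2}) finishes the proof. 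The main obstacle I anticipate is the bookkeeping in the first step — verifying that when one sandwiches the geometric resolvent equation between $\ind_{X_{E_B}}$ and $\ind_{X_{E_A}}$ the two "diagonal" terms genuinely drop out (using $\varphi\equiv1$ on $X_{E_A}$, $\varphi\equiv0$ on $X_{E_B}$, and $\operatorname{supp}$ of the resolvent $\big(H^{E_1}-\lambda\big)^{-1}$ inside $X_{E_1}$ with $X_{E_1}\cap X_{E_B}=\varnothing$) and that the surviving cross term has exactly the claimed structure; the analytic heart, the derivative bound, is a clean application of Caccioppoli once the domains $E_3,E_4$ are in place.
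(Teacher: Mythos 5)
Your proof follows essentially the same route as the paper: apply the geometric resolvent equation, sandwich with indicator functions so only the cross term survives, insert $\ind_{X_{E_3}}$ using $\supp\varphi'\subset X_{E_3}$, and control the first-order piece by the Caccioppoli inequality on the pair $\Gamma_{E_3}\subset\Gamma_{E_4}$. Two points of bookkeeping need to be repaired, and the second one is material.

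First, multiplying the identity of Proposition~\ref{satz_RGL} by $\ind_{X_{E_B}}$ on the left and $\ind_{X_{E_A}}$ on the right only gives $0=0$: both $\ind_{X_{E_B}}(H^{E_1}-\lambda)^{-1}$ and $\ind_{X_{E_B}}\varphi$ already vanish because $X_{E_B}\cap X_{E_1}=\varnothing$. The non-trivial identity comes from the opposite sandwich, $\ind_{X_{E_A}}\cdot(\text{GRE})\cdot\ind_{X_{E_B}}$, where the LHS vanishes (since $\varphi\ind_{X_{E_B}}=0$) but $\ind_{X_{E_A}}\varphi=\ind_{X_{E_A}}$ survives on the right; one then passes to your orientation via $\bigl\|\ind_{X_{E_B}}(H^{E_2}-\lambda)^{-1}\ind_{X_{E_A}}\bigr\|=\bigl\|\ind_{X_{E_A}}(H^{E_2}-\lambda)^{-1}\ind_{X_{E_B}}\bigr\|$, which is how the paper opens its proof. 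You anticipated a sign/ordering issue; this is it, and once fixed it is harmless.

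Second, your Caccioppoli display keeps a stray $\|h\|$:
\begin{equation*}
\|f'\|_{L^2(X_{E_3})}\leq C_\mathrm{CP}\bigl((1+|\lambda|)\|f\|_{L^2(X_{E_4})}+\|h\|\bigr).
\end{equation*}
Carried forward, that yields $\bigl\|\ind_{X_{E_3}}\partial(H^{E_1}-\lambda)^{-1}\ind_{X_{E_A}}\bigr\|\leq C\bigl(\bigl\|\ind_{X_{E_4}}(H^{E_1}-\lambda)^{-1}\ind_{X_{E_A}}\bigr\|+1\bigr)$ and hence an additive $+1$ in the final product, which is strictly weaker than the claimed multiplicative inequality and would ruin the MSA iteration. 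The correct use of the hypothesis that $\Gamma_{E_A}$ and $\Gamma_{E_4}$ are disjoint is to observe $\|\ind_{X_{E_A}}h\|_{L^2(X_{E_4})}=0$, so the source term in Caccioppoli vanishes outright and no $\|h\|$ survives; the equality $\|f\|_{L^2(X_{E_4})}=\bigl\|\ind_{X_{E_4}}(H^{E_1}-\lambda)^{-1}\ind_{X_{E_A}}h\bigr\|$, to which you attached the disjointness, is just the definition of $f$ and needs no geometric hypothesis. This is precisely the step the paper singles out with an underbrace, and it is the one place where the disjointness assumption on $\Gamma_{E_A}$ and $\Gamma_{E_4}$ actually does work.
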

If the six induced subgraphs are chosen first, with the necessary properties, then the cut-off function $\varphi$ exists, if the induced subgraphs are big enough and have enough distance. See also the illustration in figure \ref{abb:gru} and corollary \ref{folg:gru} for explanation. 
\begin{figure}[!htb]
\centering
\begin{tikzpicture}[scale=0.7]
\draw[thick] (-4,-3.5) -- (4,-3.5) -- (4,3.5) -- (-4,3.5) -- (-4,-3.5);\node[anchor=west] at (4,2.5) {$\Gamma_{E_2}$};
\draw[thick] (-3.5,-3) -- (2,-3) -- (2,3) -- (-3.5,3) -- (-3.5,-3);\node[anchor=west] at (2,2.5) {$\Gamma_{E_1}$};
\draw[thick,pattern=north west lines] (3,-1.2) ellipse (0.75 and 1.5);
\draw[thick,color=white,fill=white] (3,-1.2) circle (0.55cm); \node at(3,-1.2) {$\Gamma_{E_B}$}; 
\draw[thick,pattern=north west lines] (-1,-0) ellipse (2cm and 1.8cm);
\draw[thick,fill=white] (-1,-0) ellipse (1.5cm and 1.6cm);
\draw[thick] (-0.4,1.6) -- (0.6,1.6);\node[anchor=west] at (0.6,1.6) {$\Gamma_{E_4}$};
\draw[thick,pattern=north west lines] (-1,-0) circle (1);
\draw[thick,color=white,fill=white] (-1,-0) circle (0.55cm); \node at(-1,-0) {$\Gamma_{E_A}$}; 
\end{tikzpicture}
\caption{Induced subgraph for the geometric resolvent inequality.}
\label{abb:gru}
\end{figure}
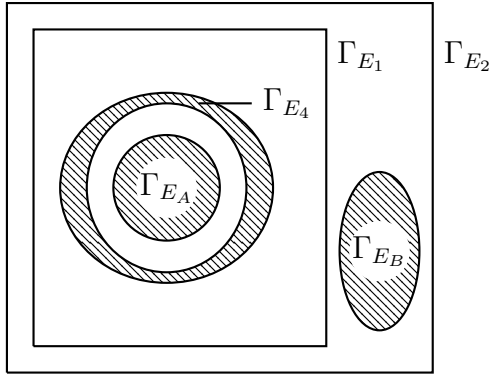

\begin{proof}
With $\varphi|_{X_{E_A}}\equiv 1, \varphi|_{X_{E_B}}\equiv 0$ we see
{\allowdisplaybreaks 
\begin{align*}
\left\| \ind_{X_{E_B}} \big(H^{E_2}-\lambda\big)^{-1}\ind_{X_{E_A}} \right\|
&=\left\| \ind_{X_{E_A}} \big(H^{E_2}-\lambda\big)^{-1}\ind_{X_{E_B}} \right\| \\
&=\left\|  \ind_{X_{E_A}} \left( \varphi \big(H^{E_2}-\lambda\big)^{-1} -\big(H^{E_1}-\lambda\big)^{-1}\varphi \right) \ind_{X_{E_B}} \right\| \\
&\hspace{-8pt}\underset{\text{pr. }\ref{satz_RGL}}{=}\left\| \ind_{X_{E_A}} \big(H^{E_1}-\lambda\big)^{-1}(\partial \varphi'+\varphi'\partial)\big(H^{E_2}-z\big)^{-1} \ind_{X_{E_B}} \right\| \\
&\leq \underbrace{\left\|\ind_{X_{E_A}} \big(H^{E_1}-\lambda\big)^{-1} \partial \varphi' \big(H^{E_2}-\lambda\big)^{-1}\ind_{X_{E_B}} \right\|}_{(i)} \\
&\phantom{=\ }+\underbrace{\left\|\ind_{X_{E_A}} \big(H^{E_1}-\lambda\big)^{-1}  \varphi' \partial \big(H^{E_2}-\lambda\big)^{-1}\ind_{X_{E_B}}\right\|}_{(ii)}.
\end{align*}}
First we want to treat $(i)$, which due to $\supp \varphi'\subset X_{E_3}$ yields
\begin{align*}
(i)
&=\left\|\ind_{X_{E_A}} \big(H^{E_1} -\lambda\big)^{-1} \partial \ind_{X_{E_3}} \ind_{X_{E_4}}  \varphi' \big(H^{E_2}-\lambda\big)^{-1}\ind_{X_{E_B}}\right\| \\
&\leq \left\|\varphi'\right\|_\infty \underbrace{\left\|\ind_{X_{E_A}} \big(H^{E_1} -\lambda\big)^{-1} \partial \ind_{X_{E_3}} \right\|}_{(iii)} \left\| \ind_{X_{E_4}}  \big(H^{E_2}-\lambda\big)^{-1}\ind_{X_{E_B}}\right\|.
\end{align*}
We have to estimate (iii). Therefore we define for $f\in L^2(X_{E_1})$, $g:=\ind_{X_{E_A}} f $ and $h:=\left(H^{E_1}-\lambda\right)^{-1}g $, such that $h\in \dom(H^{\Gamma_{E_1}})\subset\dom(\form^{E_1})$. By partial integration (see proposition 2.1 in \cite{LSV} for rearranging the sums) we get for all $w\in \dom(\form^{E_4})$
\begin{flalign*}
&\langle h',w' \rangle_{L^2(X_{E_4})}+\sum\limits_{v \in \ik{E_4}}\langle L_v \tr{v}(h),\tr{v}(w) \rangle+(\pot_\omega -\lambda)\langle h,w \rangle_{L^2(X_{E_4})}&\\
&\hspace{1.5cm}=-\langle h'',w \rangle_{L^2(X_{E_4})} -\sum\limits_{v \in \ik{E_4}} \langle \str{v}(h'),\tr{v}(w) \rangle&\\
&\hspace{1.5cm}\hspace{2em}+\sum\limits_{v \in \ik{E_4}}\langle L_v \tr{v}(h),\tr{v}(w) \rangle+(\pot_\omega -\lambda)\langle h,w \rangle_{L^2(X_{E_4})}&\\
&\hspace{1.5cm}=\left\langle -h''+(\pot_\omega-\lambda)h,w \right\rangle_{L^2(X_{E_4})}+\sum\limits_{v \in \ik{E_4}} \underbrace{\langle L_v \tr{v}(h)-\str{v}(h'),\tr{v}(w)\rangle}_{=0,\text{ by boundary conditions}} &\\
&\hspace{1.5cm}=\langle g,w \rangle_{L^2(X_{E_4})},&
\end{flalign*}
Thus we see that $h$ is a weak solution of $H^{P,L}(\omega)=g+\lambda h$ on $\Gamma_{E_4}$.
With lemma \ref{hsatz_ru2} we conclude
\begin{flalign}
\label{gl_ru_01}
\notag \|h'\|_{L^2(X_{E_3})}&=\left\|\ind_{X_{E_3}}\partial \big(H^{E_1}-\lambda\big)^{-1}\ind_{X_{E_A}} f \right\|&\\
\notag &\leq C_\mathrm{CP}\bigg((1+|\lambda|)\left\| \big(H^{E_1}-\lambda\big)^{-1}\ind_{X_{E_A}} f\right\|_{L^2(X_{E_4})}+\underbrace{\left\|\ind_{X_{E_A}} f \right\|_{L^2(X_{E_4})}}_{=0} \bigg).&
\end{flalign}
Hence $\ind_{X_{E_3}}\partial \left(H^{E_1}-\lambda\right)^{-1}\ind_{X_{E_A}}$ is a bounded operator yielding
\begin{align*}
(iii)=\left\|\ind_{X_{E_A}} \big(H^{E_1}-\lambda\big)^{-1}\partial \ind_{X_{E_3}}\right\|
&\leq C_\mathrm{CP}(1+|\lambda|)\left\| \ind_{X_{E_A}} \big(H^{E_1}-\lambda\big)^{-1} \ind_{X_{E_4}}\right\|.
\end{align*}
So we can estimate $(i)$ as claimed with a constant $C_\mathrm{CP}(1+|\lambda|)\|\varphi'\|$. Part $(ii)$ can be treated analogously, yielding the same constant and overall
\begin{flalign*}
&\left\|\ind_{X_{E_B}} \big(H^{E_2}-\lambda\big)^{-1} \ind_{X_{E_A}}\right\|&\\
 &\hspace{1cm}\leq C_\mathrm{GRU} \left\| \ind_{X_{E_A}} \big(H^{E_1}-\lambda\big)^{-1} \ind_{X_{E_4}}\right\| \left\|\ind_{X_{E_4}} \big(H^{E_2}-\lambda\big)^{-1} \ind_{X_{E_B}} \right\|,
\end{flalign*}
where $C_\mathrm{GRU}=2\, C_\mathrm{CP}(1+|\lambda|) \|\varphi'\|_\infty$, depending only on $\ukl$, $S$ the interval $I$ and $C_\pot$.
\end{proof}

\begin{folg}
\label{folg:gru}
Let $\Gamma$ be a metric graph with \geomalles\ and $H^{P,L}(\omega)$ satisfy \RBPLS\ and \eqref{pot:char,dichte}, $R$, $s$, $r$ three length scales and $x$, $v$, $v_1\in V$ three vertices, such that
\begin{itemize}
 \item $\Lambda_s(v)\subset \Lambda_R(x) $ where $\Lout_R(x)$ and $ \Lambda_s(v)$ are disjoint,
 \item $\Lambda_r(v_1)\subset \Lambda_s(v)$.
\end{itemize}
Let $I$ be a bounded interval and  $\lambda \in \res(H^{\Lambda_R(x)}(\omega))\cap \res(H^{\Lambda_s(v)}(\omega))\cap I$.
Then we have
\begin{flalign*}
&\left\| \ind_{\Lout_R(x)} \big( H^{\Lambda_R(x)}(\omega)-\lambda\big)^{-1} \ind_{\Lint_r(v_1)} \right\| &\\
&\hspace{0.8cm}\leq C_{\mathrm{GRU}} \left\| \ind_{\Lout_R(x)}\big( H^{\Lambda_R(x)}(\omega)-\lambda  \big)^{-1} \ind_{\Lout_s(v)}\right\|
\cdot \left\| \ind_{\Lout_s(v)}\big( H^{\Lambda_s(v)}(\omega)-\lambda  \big)^{-1} \ind_{\Lint_r(v_1)}\right\|.
\end{flalign*}

\end{folg}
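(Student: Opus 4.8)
The plan is to deduce the corollary from the general geometric resolvent inequality, Theorem~\ref{satz_GRU}, by specializing the six induced subgraphs occurring there to the present ``ball'' configuration, and then to enlarge the localizing sets on the right-hand side to the boundary layer $\Lout_s(v)$ by monotonicity of the operator norm in the cut-off sets.

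Concretely, I would apply Theorem~\ref{satz_GRU} with
\[
 \Gamma_{E_2}:=\Lambda_R(x),\qquad \Gamma_{E_1}:=\Lambda_s(v),\qquad \Gamma_{E_A}:=\Lint_r(v_1),\qquad \Gamma_{E_B}:=\Lout_R(x),
\]
so that $H^{E_2}=H^{\Lambda_R(x)}(\omega)$, $H^{E_1}=H^{\Lambda_s(v)}(\omega)$ and $\lambda\in\res(H^{E_1})\cap\res(H^{E_2})\cap I$ by hypothesis. All balls involved are finite subgraphs by \eqref{geom:poly}, and the finiteness of the vertex degree demanded by Theorem~\ref{satz_GRU} is part of \geomalles. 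The structural inclusions are read off from the two bullet points: $\Gamma_{E_1}\subset\Gamma_{E_2}$ is the first; $\Gamma_{E_A}=\Lambda_{r/3}(v_1)\subset\Lambda_r(v_1)\subset\Lambda_s(v)=\Gamma_{E_1}$ follows from the second; and $\Gamma_{E_B}=\Lout_R(x)\subset\Lambda_R(x)=\Gamma_{E_2}$ is edge-disjoint from $\Lambda_s(v)$ by the first, hence $\Gamma_{E_B}\subset\Gamma_{E_2}\setminus\Gamma_{E_1}$.

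It then remains to provide the cut-off function $\varphi$ and the auxiliary subgraphs $\Gamma_{E_3}\subset\Gamma_{E_4}\subset\Gamma_{E_1}$ required by Theorem~\ref{satz_GRU}. I would take $\Gamma_{E_3}\subset\Gamma_{E_4}$ to be two nested collars lying inside the boundary layer of $\Lambda_s(v)$, chosen so that $X_{E_4}\subseteq X_{\Lout_s(v)}=X_{\Lambda_s(v)}\setminus X_{\Lambda_{s-3\okl}(v)}$, $\rk{E_3}\subset\ik{E_4}$, and $\Gamma_{E_4}$ is disjoint from $\Gamma_{E_A}=\Lint_r(v_1)$ (which sits well inside $\Lambda_s(v)$); and then $\varphi\in C^\infty_{\mathrm{comp}}(X_{E_1})$ which equals $1$ on $\Lint_r(v_1)$ (indeed on all of $\Lambda_s(v)$ away from that boundary layer), equals $0$ at every boundary vertex of $\Gamma_{E_1}$, is constant on a neighbourhood of every vertex of $\Gamma_{E_1}$, and satisfies $\supp\varphi'\subset X_{E_3}$. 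Producing such a $\varphi$ with $\|\varphi'\|_\infty$ bounded by a constant depending only on the edge-length bounds $\ukl,\okl$ — and not on $R,s,r$ — is the one genuinely technical point and the step I expect to cost the most care: the interpolation from $1$ to $0$ must be carried out inside the boundary layer while $\varphi$ stays locally constant around the vertices met there, and it is exactly here that \geomalles\ and the standing assumption $r>\rgeom$ (which forces the relevant radii to be large in units of $\okl$) enter, as already anticipated in the discussion following Theorem~\ref{satz_GRU}.

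Granting $\varphi$, Theorem~\ref{satz_GRU} yields
\[
 \big\|\ind_{X_{E_B}}(H^{E_2}-\lambda)^{-1}\ind_{X_{E_A}}\big\|\le C_\mathrm{GRU}\,\big\|\ind_{X_{E_B}}(H^{E_2}-\lambda)^{-1}\ind_{X_{E_4}}\big\|\;\big\|\ind_{X_{E_4}}(H^{E_1}-\lambda)^{-1}\ind_{X_{E_A}}\big\|,
\]
with $C_\mathrm{GRU}=2\,C_\mathrm{CP}\,(1+|\lambda|)\,\|\varphi'\|_\infty$ depending only on $\ukl$, $S$, the potential constants and $I$ — in particular not on the radii or vertices, since $C_\mathrm{CP}$ (from Lemma~\ref{hsatz_ru2}) and $\|\varphi'\|_\infty$ are uniform. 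Finally, since $X_{E_A}=X_{\Lint_r(v_1)}$, $X_{E_B}=X_{\Lout_R(x)}$ and $X_{E_4}\subseteq X_{\Lout_s(v)}$, one has $\ind_{X_{E_4}}=\ind_{X_{E_4}}\ind_{\Lout_s(v)}$, so inserting this and using $\|\ind_A T\ind_C\|\le\|\ind_{A'}T\ind_{C'}\|$ whenever $A\subseteq A'$ and $C\subseteq C'$ bounds each of the two factors on the right by the corresponding norm with $X_{E_4}$ replaced by $\Lout_s(v)$; the left-hand side is $\|\ind_{\Lout_R(x)}(H^{\Lambda_R(x)}(\omega)-\lambda)^{-1}\ind_{\Lint_r(v_1)}\|$, and the asserted inequality follows. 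Thus the whole argument reduces to the geometric construction of $\varphi$ and the two collars; the analytic substance is already contained in Theorem~\ref{satz_GRU}.
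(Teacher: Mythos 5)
Your proposal is correct and essentially reproduces the paper's own reduction: the paper's remark after the corollary identifies exactly the same six sets ($E_2=E(x,R)$, $E_1=E(v,s)$, $E_A=E(v_1,r/3)$, $E_B=E(x,R)\setminus E(x,R-3\okl)$, $E_3=E(v,s-\okl)\setminus E(v,s-2\okl)$, $E_4=E(v,s)\setminus E(v,s-3\okl)$, the last inducing $\Lout_s(v)$ directly), so the argument is the same application of Theorem~\ref{satz_GRU}. The only cosmetic difference is that you allow $X_{E_4}$ to be a proper subset of $X_{\Lout_s(v)}$ and then enlarge by monotonicity of the cut-off norms, whereas the paper chooses $E_4$ to equal the inducing set of $\Lout_s(v)$ outright; your version is in fact marginally more robust about the disjointness of $E_4$ from $E_A$, which the corollary's hypotheses only guarantee implicitly.
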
 

\begin{bem}
The sets used in the corollary in the notation of theorem \ref{satz_GRU} are:
$E_2=E(x,R)$, $E_1=E(v,s)$, $E_A=E(v_1,\frac{r}{3})$ inducing $\Lint_r(v_1)$, $E_B=E(x,R)\setminus E(x,R-3\okl)$ inducing $\Lout_R(x)$, 
$E_3=E(v,s-\okl)\setminus E(v,s-2\okl)$ and $E_4=E(v,s)\setminus E(v,s-3\okl)$ inducing $\Lout_s(v)$.
The construction of the sets $E_B$ and $E_3$ in this case are the actual reason of defining the exteriors $\Lout$ with a width of $3\okl$.

The corollary will be used with container sets, taking the role of $\Lambda_s(v)$ and in the special case $r=s$ and $v=v_1$.
\end{bem}

\begin{figure}[!htb]
\hspace*{2.2cm}
\begin{minipage}[hb]{5.35cm}
\centering
\begin{tikzpicture}[scale=0.8]
\coordinate (x) at (0,0);\coordinate (v) at (-0.7,-0.4);\coordinate (v1) at (-1.2,-1.1);
\draw[thick,pattern=north west lines] (x) circle (2.9cm);   
\draw[thick,color=gray,fill=white] (x) circle (2.8cm);   
\draw[thick,pattern=north west lines] (v) circle  (1.8cm);  
\draw[thick,color=gray,fill=white] (v) circle (1.7cm); 
\draw[thick,pattern=north west lines] (v1) circle (0.25cm);
\knotenn{v}
\knotenn{x}
\node at ($(x)+(2.5,2.5)$) {$\Lout_R(x)$};
\node at ($(v)+(1.7,1.7)$) {$\Lout_s(v) $};
\draw (v1)--($(v1)+(0.4,0)$);
\node[anchor=west] at ($(v1)+(0.24,0)$) {$\Lint_r(v_1) $};
\end{tikzpicture}
\end{minipage}
\begin{minipage}[hb]{5.35cm}
\centering
\begin{tikzpicture}[scale=0.8]
\coordinate (x) at (0,0);\coordinate (v) at (-1.3,+0.8);
\draw[thick,pattern=north west lines] (x) circle (2.9cm);   
\draw[thick,color=gray,fill=white] (x) circle (2.8cm);   
\draw[thick,pattern=north west lines] (v) circle  (0.9cm);  
\draw[thick,color=gray,fill=white] (v) circle (0.8cm); 
\draw[thick,pattern=north west lines] (v) circle (0.25cm);
\knotenn{x}
\node at ($(x)+(2.5,2.5)$) {$\Lout_R(x)$};
\draw (v)--($(v)+(1.1,0)$);
\node at ($(v)+(1.02,1.02)$) {$\Lout_r(v) $};
\node[anchor=west] at ($(v)+(1.1,0)$) {$\Lint_r(v) $};
\end{tikzpicture}
\end{minipage}
\caption{Induced subgraphs as used in corollary \ref{folg:gru} and special case $v=v_1$, $s=r$.}
\label{fig:folg:gru}
\end{figure}
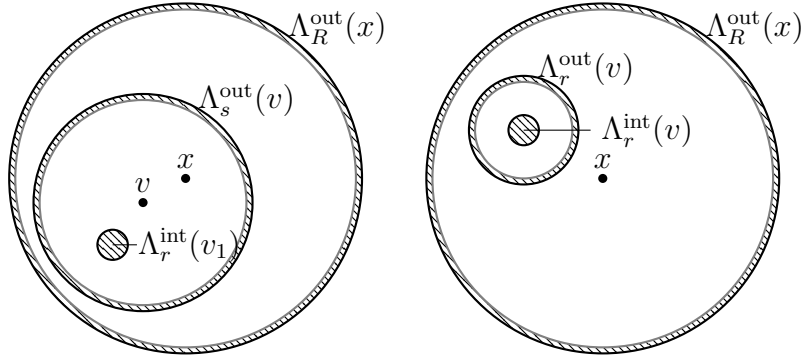

\section{Weyl asymptotics}
\label{sec:WEYL}
In this section we will discuss uniform bounds on the distribution of eigenvalues for Laplace operators on finite induced subgraphs.
We will use the spectral counting function of the Dirichlet Laplacian to estimate the perturbation by other boundary conditions and the potential.

\begin{defn}
Let $\Gamma=(E,V,l,i,j)$ be a metric graph.
The spectral counting function $n:\rz \to \nz\cup\{0,\infty\} $ of a self-adjoint operator $H$ on $\Gamma$ is defined by
\begin{equation*}
n(\lambda)=\Tr \left(\ind_{(-\infty,\lambda]}(H) \right).
\end{equation*}
\end{defn}
For discrete spectrum it yields the number of eigenvalues below $\lambda$ counted with multiplicity and in fact is an eigenvalue counting function.
\begin{hsatz} 
\label{hsatz:weyl_dir}
The following holds:
\begin{enumerate}
\item 
Let $\Gamma$ be a finite metric graph with finite edge lengths and $H^{P,L}$ a self-adjoint Laplacian with boundary conditions \RBPLS\ on $\Gamma$. Then the spectrum of $H^{P,L}$ is discrete.
\item 
Let $H$ be a self-adjoint operator with compact resolvent and $V$ a bounded potential on a Hilbert space $\hil$. Then $H+V$ has compact resolvent.
\item 
The negative Laplace operator with Dirichlet boundary conditions on an interval with length $l$ possesses the eigenvalues $\frac{n^2
 \pi^2}{l^2}$ for $n\in \nz$.
\item 
The eigenvalue counting function of the operator from point 3 has the form $n(\lambda)=\left\lfloor \frac{l}{\pi}\sqrt{\lambda} \right\rfloor $ for $\lambda\geq 0$ and $n(\lambda)=0$ for $\lambda <0$.
\end{enumerate}
\end{hsatz}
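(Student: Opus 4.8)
The plan is to handle the four assertions in order; only the first requires real work, and the other three are explicit computations or textbook facts that I would dispatch quickly.

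For point 1, I would work with the form $\form_L$ from Theorem \ref{satz_H_sa}. Since $\Gamma$ is finite and all edge lengths are finite, $X_E$ is a finite disjoint union of bounded intervals, so $W^{1,2}(X_E)$ embeds compactly into $L^2(X_E)$ by Rellich's theorem applied on each interval. The remaining point is that the form norm on $\dom(\form_L)$ controls the $W^{1,2}$ norm. Here I would invoke the trace estimate for bounded intervals, $\|\tr(f)\|^2 \le \ep\|f'\|^2 + C_\ep\|f\|^2$ (the same ingredient already used in the proof of Theorem \ref{satz_CTA}, and valid here since the finitely many edge lengths are bounded below), together with $s_L[\tr(f)] \ge -S\|\tr(f)\|^2$, which is immediate from the uniform lower bound $-S$ in \RBPLS. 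This yields $\form_L[f] \ge (1-S\ep)\|f'\|^2 - SC_\ep\|f\|^2$; choosing $\ep<1/S$ shows that $f\mapsto (\form_L[f] + (1+SC_\ep)\|f\|^2)^{1/2}$ dominates a multiple of $\|f\|_{W^{1,2}(X_E)}$ on $\dom(\form_L)$. Consequently a sequence bounded in the form norm is bounded in $W^{1,2}(X_E)$ and has an $L^2$-convergent subsequence, i.e. the form domain is compactly embedded in $L^2(X_E)$. By the standard criterion for semibounded closed forms (compact embedding of the form domain implies compact resolvent of the associated operator; see e.g. \cite{kato}, after a harmless spectral shift making the form positive), $H^{P,L}$ has compact resolvent and hence purely discrete spectrum.

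For point 2, I would pick $z$ in $\res(H)$ with $\|(H-z)^{-1}\|\,\|V\|<1$ (for instance $z=it$ with $|t|$ large) and factor $H+V-z = (H-z)\bigl(1+(H-z)^{-1}V\bigr)$; the second factor is boundedly invertible by a Neumann series, so $(H+V-z)^{-1} = \bigl(1+(H-z)^{-1}V\bigr)^{-1}(H-z)^{-1}$ is a product of a bounded operator with a compact one, hence compact, and the resolvent identity then propagates compactness to all of $\res(H+V)$. For point 3, on $(0,l)$ the equation $-u''=\lambda u$ with $u(0)=u(l)=0$ is solved explicitly: for $\lambda\le 0$ the boundary conditions force $u\equiv 0$, while for $\lambda>0$ the condition $u(0)=0$ removes the cosine mode and $u(l)=0$ forces $\sin(\sqrt\lambda\,l)=0$, i.e. $\sqrt\lambda\,l\in\pi\nz$, giving precisely the eigenvalues $n^2\pi^2/l^2$, each with one-dimensional eigenspace spanned by $x\mapsto\sin(n\pi x/l)$. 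Point 4 follows at once: these eigenvalues are simple and positive, so for $\lambda\ge 0$ we get $n(\lambda)=\#\{n\in\nz : n^2\pi^2/l^2\le\lambda\}=\#\{n\in\nz : n\le (l/\pi)\sqrt\lambda\}=\lfloor (l/\pi)\sqrt\lambda\rfloor$, and $n(\lambda)=0$ for $\lambda<0$.

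The main obstacle is point 1 — concretely, keeping the form coercive on $W^{1,2}(X_E)$ up to a multiple of the $L^2$ norm despite the possibly indefinite boundary term $s_L[\tr(f)]$; once the compactness of the form-domain embedding is secured, the compact resolvent criterion and the remaining three points are routine.
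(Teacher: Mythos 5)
Your proposal is correct, and its structure mirrors the paper's, but the substance differs on the first two points because the paper's proof there is essentially by citation. For point~1 the paper simply refers to Theorem~18 of \cite{Kuchment-04}, whereas you give a self-contained argument: the finitely many bounded edges make $W^{1,2}(X_E)\hookrightarrow L^2(X_E)$ compact by Rellich, the uniform trace estimate together with $s_L\geq -S\|\cdot\|^2$ shows the form norm dominates a multiple of $\|\cdot\|_{W^{1,2}(X_E)}$, and the standard criterion (compact embedding of the form domain implies compact resolvent) then gives discrete spectrum. That is a more elementary and more explicit route, and it recycles exactly the Sobolev/trace estimate the paper itself uses inside the Combes--Thomas proof, so it is very much in the spirit of the paper even though the paper chose to cite instead. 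For point~2 the paper says it follows from the second resolvent identity, i.e.\ from $(H+V-z)^{-1}=(H-z)^{-1}-(H+V-z)^{-1}V(H-z)^{-1}$, which exhibits the resolvent as compact for any $z\in\res(H)\cap\res(H+V)$ without a smallness condition; your Neumann-series factorization $H+V-z=(H-z)\bigl(1+(H-z)^{-1}V\bigr)$ is a close variant that first needs $\|(H-z)^{-1}\|\,\|V\|<1$ and then propagates compactness via the first resolvent identity. Both are valid; the paper's version is marginally shorter, yours is fully explicit. Points~3 and~4 coincide with the paper's treatment (explicit Dirichlet eigenvalues and direct counting).
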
 
\begin{proof}
\begin{enumerate}
 \item This was proven in theorem 18 in \cite{Kuchment-04}.
 \item Easily follows from the second resolvent identity.
 \item Of course, this is known. The solution of the differential equation $-f''=\lambda f$ with the boundary values $f(0)=f(l)=0$ yields  
the above eigenvalues with the eigenfunctions $c\cdot \sin(\sqrt{\lambda}x) $ for $\lambda >0 $ and only the trivial solution for $\lambda \leq 0$.
 \item Follows from 3. \qedhere
\end{enumerate}
\end{proof}

Now we will estimate the eigenvalue counting function of restrictions of \RBPLS\ Laplacians to finite subgraphs.
To achieve this we will use the corresponding function for the Dirichlet boundary condition.
The idea is the same as in \cite{GruberLV-07}, where this was proven for metric graphs with constant edge lengths.
Point 4 of the last proposition yields
\begin{bem}
\label{bem:EZf:Dir}
Let $\Gamma$ be a finite metric graph with finite edge lengths and $\okl$ be the maximal edge length. Then we have the following estimate for the eigenvalue counting function of the Dirchlet Laplace $H_\mathrm{D}$ on $\Gamma$
\begin{equation*}
n_{H_\mathrm{D}}(\lambda) \leq \begin{cases} \frac{\okl}{\pi}\sqrt{\lambda}\cdot |E|, & \lambda \geq 0 \\ 0, & \lambda<0, \end{cases}
\end{equation*}
where $|E|$ stands for the number of edges in $E$.
\end{bem}

\begin{hsatz} 
\label{hsatz:EZf:diff}
Let $\Gamma$ be a finite metric graph with finite edge lengths and $H_1$ and $H_2$ be two self-adjoint realizations of the negative Laplacian of the form \RBPLS\ on $\Gamma$. Then we have
\begin{equation*}
|n_{H_1}(\lambda)-n_{H_2}(\lambda)|\leq 2\,|E|.
\end{equation*}
\end{hsatz}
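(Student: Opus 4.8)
The plan is to compare each of $H_1$ and $H_2$ with the Dirichlet Laplacian $H_\mathrm{D}$ on $\Gamma$ (all vertex conditions replaced by $\tr{v}(f)\equiv 0$, as in remark \ref{bem:EZf:Dir}), and to exploit that passing from $H_\mathrm{D}$ to an arbitrary \RBPLS-realization only enlarges the form domain by a subspace of dimension at most $2|E|$. First I would note that by proposition \ref{hsatz:weyl_dir}, point 1, all three operators have purely discrete spectrum, so their spectral counting functions genuinely count eigenvalues with multiplicity, and by theorem \ref{satz_H_sa} the associated quadratic forms $\form_1$, $\form_2$ are closed and lower bounded; the same is clear for the Dirichlet form $\form_\mathrm{D}$, which is just $f\mapsto\|f'\|^2_{L^2(X_E)}$ on the space of $W^{1,2}(X_E)$-functions vanishing at every vertex. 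Hence the min--max principle is available for all of them.

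Next I would record the two structural facts that drive the argument. On the one hand, $\dom(\form_\mathrm{D})=\{f\in W^{1,2}(X_E)\with\tr{}(f)=0\}$ is contained in $\dom(\form_i)=\{f\in W^{1,2}(X_E)\with\tr{}(f)\in\dom(s_L)\}$ for $i=1,2$, because $0\in\dom(s_L)$; moreover on $\dom(\form_\mathrm{D})$ the boundary term $s_L[\tr{}(f)]$ vanishes, so $\form_1$, $\form_2$ and $\form_\mathrm{D}$ all coincide there. On the other hand, the trace map $f\mapsto\tr{}(f)$ on $W^{1,2}(X_E)$ takes values in $\bigoplus_{v\in V}\rrv$, which for a finite graph with finite edges is a linear space of dimension $\sum_{v\in V}d_v=2|E|$; consequently the codimension of $\dom(\form_\mathrm{D})$ inside $\dom(\form_i)$ is at most $2|E|$.

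Finally I would invoke the standard form version of eigenvalue bracketing: if the form of a lower bounded self-adjoint operator $A$ with discrete spectrum is restricted to a subspace of its domain of codimension $k$, yielding the form of an operator $B$, then $\lambda_n(A)\le\lambda_n(B)\le\lambda_{n+k}(A)$ for every $n$, which at the level of counting functions is precisely $n_B(\lambda)\le n_A(\lambda)\le n_B(\lambda)+k$. This comes directly from the characterisation $\lambda_n=\inf_{\dim M=n}\sup_{0\neq u\in M}q[u]/\|u\|^2$, using that any $(n+k)$-dimensional test subspace for $A$ meets $\dom(\form_B)$ in dimension at least $n$. Applying this with $A=H_i$, $B=H_\mathrm{D}$ and $k\le 2|E|$ gives
\begin{equation*}
n_{H_\mathrm{D}}(\lambda)\le n_{H_i}(\lambda)\le n_{H_\mathrm{D}}(\lambda)+2|E| \qquad (i=1,2),
\end{equation*}
so both $n_{H_1}(\lambda)$ and $n_{H_2}(\lambda)$ lie in an interval of length $2|E|$, whence $|n_{H_1}(\lambda)-n_{H_2}(\lambda)|\le 2|E|$. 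The only point requiring care — the main, if modest, obstacle — is the codimension bookkeeping together with the verification that the forms genuinely agree on $\dom(\form_\mathrm{D})$ (both hinge on $\dom(s_L)$ being a linear subspace containing $0$, so that $H_\mathrm{D}$ has the smallest form domain among all \RBPLS-realizations); the remainder is the routine min--max argument, and no estimate involving a potential is needed here since $H^{P,L}$ carries none.
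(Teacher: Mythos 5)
Your proof is correct, but it takes a genuinely different route from the paper's. The paper works directly at the level of \emph{operator} domains: it observes that all \RBPLS-realizations contain the common minimal domain $D_0 := W^{2,2}_0(X_E)$ (zero trace and zero signed derivative trace at every vertex), on which $H_1$ and $H_2$ literally coincide, and that each $\dom(H_i)$ has codimension exactly $2|E|$ over $D_0$ by von Neumann's theory of deficiency indices. It then compares $n_{H_1}$ and $n_{H_2}$ to each other in one stroke through this common subspace, using the variational characterization $n_H(\lambda)=\max\{\dim Y : Y\subset\dom(H),\ H|_Y\le\lambda\}$. You instead work at the level of \emph{form} domains, introduce the Dirichlet Laplacian $H_\mathrm{D}$ as a third comparison object, note that $\dom(\form_\mathrm{D})\subset\dom(\form_i)$ has codimension at most $2|E|$ (via the trace map onto $\dom(s_L)\subset\bigoplus_v\ell^2(E_v)$), and apply form bracketing twice, once for each $H_i$ against $H_\mathrm{D}$. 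Both yield the same bound. Your approach has the advantage of staying entirely within the quadratic-form framework that the paper otherwise emphasizes (and matches remark~\ref{bem:EZf:Dir} nicely, since you already need $H_\mathrm{D}$ there); the paper's is slightly leaner in that it avoids the detour through a third operator and doesn't need to check that the forms agree on the common domain, but it does have to assert the deficiency-index count $\dim(\dom(H_i)/D_0)=2|E|$ which you sidestep by bounding $\dim\dom(s_L)\le\sum_v d_v=2|E|$ directly.
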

\begin{proof} 
We have $\dom(H_i)/D_0=2|E|$, where  $D_0:=W^{2,2}_0(X_E)$ is the domain of the minimal Laplace operator. 
Stating $n_H(\lambda)$ in terms of dimensions of eigenfunction spaces, we get
\begin{align*}
n_{H_2}(\lambda)&=\max\{ \dim Y \with Y\subset \dom(H_2), H_2\big|_Y \leq \lambda \}\\
&\leq \max\{ \dim Y \with Y\subset D_0, H_2\big|_Y \leq \lambda \}+2\,|E|,
\intertext{as we cut a subspace of dimension $2|E|$; moreover $H_1\big|_{D_0}=H_2\big|_{D_0}$ holds}
&=\max\{ \dim Y \with Y\subset D_0, H_1\big|_Y \leq \lambda \}+2|E|\\
&\leq \max\{ \dim Y \with Y\subset \dom(H_1), H_1\big|_Y \leq \lambda \}+2|E|\\
&=n_{H_1}(\lambda)+2|E|.\qedhere
\end{align*}
\end{proof}

\begin{hsatz}
Let $\Gamma$ be a finite metric graph with finite edge lengths and $H$ a negative Laplacian with boundary conditions \RBPLS. Let $W$ be a bounded self-adjoint operator on $L^2(X_E)$. Then the eigenvalue counting function of $H+W$ satisfies
\begin{equation*}
n_{H+W}(\lambda) \leq
\begin{cases}
|E|\left[2+\frac{\okl}{\pi} \left(\sqrt{\lambda}+\|W\|\right)\right] , & \lambda\geq -\|W\| \\
2\,|E|, & \lambda < -\|W\|,
\end{cases}
\end{equation*}
where $\okl$ ist the length of one of the longest edges in $\Gamma$.
\end{hsatz}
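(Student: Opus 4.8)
The plan is to reduce the bound to the Dirichlet Laplacian, whose eigenvalue counting function is explicit (remark~\ref{bem:EZf:Dir}), absorbing the bounded perturbation $W$ as a shift of the spectral parameter and the change of boundary conditions as an additive error of $2|E|$.

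First I would note that $H+W$ again has purely discrete spectrum: $H$ has compact resolvent by proposition~\ref{hsatz:weyl_dir}, $W$ is bounded, so proposition~\ref{hsatz:weyl_dir}(2) applies, $\dom(H+W)=\dom(H)$, and $n_{H+W}$ really is an eigenvalue counting function. Since $W=W^\ast$ with $-\|W\|\,\ind\le W\le\|W\|\,\ind$, the quadratic form of $H+W$ dominates that of $H-\|W\|$ on their common domain, so by the min-max principle $\mu_k(H-\|W\|)\le\mu_k(H+W)$ for every $k$, writing $\mu_k(\cdot)$ for the $k$-th eigenvalue counted with multiplicity. Therefore
\begin{equation*}
n_{H+W}(\lambda)\le n_{H-\|W\|}(\lambda)=\Tr\big(\ind_{(-\infty,\lambda+\|W\|]}(H)\big)=n_H(\lambda+\|W\|).
\end{equation*}

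Next I would let $H_\mathrm{D}$ be the Dirichlet Laplacian on $\Gamma$, which is of the form \RBPLS\ (take $P_v=\ind$ and $L_v=0$ for every $v\in V$, so that the vertex condition reads $\tr{v}(f)\equiv0$). Proposition~\ref{hsatz:EZf:diff} then gives $n_H(\mu)\le n_{H_\mathrm{D}}(\mu)+2|E|$ for all $\mu\in\rz$, and remark~\ref{bem:EZf:Dir} gives $n_{H_\mathrm{D}}(\mu)\le\tfrac{\okl}{\pi}\sqrt{\mu}\,|E|$ for $\mu\ge0$ and $n_{H_\mathrm{D}}(\mu)=0$ for $\mu<0$. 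Substituting $\mu=\lambda+\|W\|$ and combining: if $\lambda<-\|W\|$ then $\mu<0$ and $n_{H+W}(\lambda)\le2|E|$; if $\lambda\ge-\|W\|$ then $\mu\ge0$ and
\begin{equation*}
n_{H+W}(\lambda)\le 2|E|+\tfrac{\okl}{\pi}\sqrt{\lambda+\|W\|}\,|E|\le|E|\Big[2+\tfrac{\okl}{\pi}\big(\sqrt{\lambda}+\|W\|\big)\Big],
\end{equation*}
the last step being the elementary inequality $\sqrt{\lambda+\|W\|}\le\sqrt{\lambda}+\|W\|$.

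I do not expect a real obstacle once proposition~\ref{hsatz:weyl_dir}, proposition~\ref{hsatz:EZf:diff} and remark~\ref{bem:EZf:Dir} are available: the argument is essentially bookkeeping. The points deserving care are verifying that $H+W$ stays purely discrete (so that $n_{H+W}$ is meaningful), applying the min-max comparison on the correct common form domain, and keeping the two cases $\lambda\ge-\|W\|$ and $\lambda<-\|W\|$ apart when reading off the final inequality.
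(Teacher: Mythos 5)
Your approach is exactly the paper's: shift by $\|W\|$ via min-max to get $n_{H+W}(\lambda)\leq n_H(\lambda+\|W\|)$, compare with the Dirichlet Laplacian via proposition~\ref{hsatz:EZf:diff}, insert the explicit Dirichlet count from remark~\ref{bem:EZf:Dir}, and split on the sign of $\lambda+\|W\|$. Up to that point the argument correctly yields $n_{H+W}(\lambda)\leq |E|\bigl[2+\tfrac{\okl}{\pi}\sqrt{\lambda+\|W\|}\,\bigr]$ for $\lambda\geq-\|W\|$.

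Your last line, however, is false. The claimed ``elementary inequality'' $\sqrt{\lambda+\|W\|}\leq\sqrt{\lambda}+\|W\|$ does not hold in general: take $\lambda=0$, $\|W\|=\tfrac{1}{2}$, giving $\sqrt{1/2}\approx 0.707>0.5$. Squaring reduces it to $2\sqrt{\lambda}+\|W\|\geq1$, which fails whenever $\lambda$ and $\|W\|$ are both small, and $\sqrt{\lambda}$ is not even real for $\lambda\in[-\|W\|,0)$, a range the first case admits. The inequality that is actually available (for $\lambda\geq0$) is subadditivity of the square root, $\sqrt{\lambda+\|W\|}\leq\sqrt{\lambda}+\sqrt{\|W\|}$. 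Comparing with corollary~\ref{folg:weyl}, where the corresponding term reads $\sqrt{\lambda}+\sqrt{C_\pot}$, the $\|W\|$ in the printed proposition is almost certainly a misprint for $\sqrt{\|W\|}$; the paper's own proof in fact stops at $\sqrt{\lambda+\|W\|}$ and passes silently to the stated bound. You should stop there too, or invoke subadditivity, rather than assert a false inequality to match the printed form verbatim.
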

\begin{proof} 
By min-max principle and proposition \ref{hsatz:EZf:diff} we get with the Dirichlet Laplacian $H_\mathrm{D}$:
\begin{align*}
n_{H+W}(\lambda)&\leq n_H(\lambda+\|W\|)\\
&\leq 2\,|E|+ n_{H_\mathrm{D}}(\lambda +\|W\|).
\end{align*}
Inserting the eigenvalue counting function of $H_\mathrm{D}$ from proposition \ref{bem:EZf:Dir} we conclude the assertion.
\end{proof} 
We finish this section by stating a corollary for special induced subgraphs of infinite metric graphs, namely balls with radius $r$.
\begin{folg} 
\label{folg:weyl}
Let $\Gamma$  be a metric graph with \geomalles\ of polynomial growth with degree $d$. Let $(H^{P,L}(\omega))$ be a random negative Laplacian with \RBPLS\ and \eqref{pot:char,dichte}.
Then for the restriction of $H^{P,L}$ to balls $\Lambda_r(v)$ with radius $r$ we have the estimate:
\begin{equation*}
n_{H^{\Lambda_r(v)}(\omega)}(\lambda) \leq \begin{cases}\left( 2+\frac{(\sqrt{\lambda}+\sqrt{C_\pot})\okl  }{\pi} \right)\frac{\cp \cdot r^d}{\ukl},& \lambda \geq -C_\pot\\\frac{2\,\cp\cdot r^d}{\ukl}, & \lambda<-C_\pot.    \end{cases}
\end{equation*}
Thus for each bounded interval $I$ there exists a constant $C_\mathrm{Weyl}$, which depends only on $\ukl$, $\okl$, $C_\pot$, $\cp$ and $I$, but not on $v$ and $\omega$, with
\begin{equation*}
\fa \lambda \in I: \qquad
n_{H^{\Lambda_r(v)}(\omega)}(\lambda)\leq C_\mathrm{Weyl}\cdot r^d.
\end{equation*}
\end{folg}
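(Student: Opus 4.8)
The plan is to reduce the corollary to the preceding proposition (the Weyl bound for $n_{H+W}$ on a finite metric graph) applied to the finite induced subgraph $\Lambda_r(v)$. The random operator restricted to this ball is $H^{\Lambda_r(v)}(\omega) = H^{\Lambda_r(v)} + \pot_\omega|_{\Lambda_r(v)}$, where $H^{\Lambda_r(v)}$ is a self-adjoint Laplacian on the finite graph $\Gamma_{E(v,r)}$ with boundary conditions of the form \RBPLS\ (namely the primary $(P_w,L_w)$ conditions at inner vertices, Dirichlet at boundary vertices — this is again of the form \RBPLS), and the potential is a bounded self-adjoint multiplication operator with $\|\pot_\omega|_{\Lambda_r(v)}\| \leq C_\pot$ by \eqref{gl_pot_stetig}. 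Thus the previous proposition applies verbatim with $W = \pot_\omega|_{\Lambda_r(v)}$, $\|W\| \leq C_\pot$, and $\okl$ the maximal edge length, which by \eqref{geom:uU} is bounded by the global $\okl$. This gives
\begin{equation*}
n_{H^{\Lambda_r(v)}(\omega)}(\lambda) \leq
\begin{cases}
|E(v,r)|\left[2 + \tfrac{\okl}{\pi}\left(\sqrt{\lambda}+\sqrt{C_\pot}\right)\right], & \lambda \geq -C_\pot,\\
2\,|E(v,r)|, & \lambda < -C_\pot,
\end{cases}
\end{equation*}
where I have used $\|W\| \leq C_\pot$ inside the square root (and in the case distinction $\lambda \geq -\|W\|$ is implied by $\lambda \geq -C_\pot$), with the convention $\sqrt{\lambda+\|W\|} \leq \sqrt{\lambda}+\sqrt{C_\pot}$ for $\lambda \geq 0$ and the straightforward bound otherwise.

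The remaining step is to bound the number of edges $|E(v,r)|$ in terms of $r^d$. Here I would use the geometric assumptions: each edge $e \in E(v,r)$ has length $l(e) \geq \ukl$ by the lower bound in \eqref{geom:uU}, so $|E(v,r)|\,\ukl \leq \sum_{e \in E(v,r)} l(e) = \vol(\Lambda_r(v))$, and by uniform polynomial growth \eqref{geom:poly} (valid since $r \geq \ukl$) this is $\leq \cp\, r^d$. Hence $|E(v,r)| \leq \cp\, r^d/\ukl$. Substituting this into the displayed inequality gives exactly the claimed case-split estimate.

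Finally, for the uniform statement over a bounded interval $I$: on $I$ the quantity $\sqrt{\lambda}$ (or rather $\sqrt{\lambda^+}$, with $\sqrt{\cdot}$ interpreted as $0$ for negative arguments) is bounded by a constant depending only on $I$, and $2 + \tfrac{\okl}{\pi}(\sqrt{\lambda}+\sqrt{C_\pot})$ together with the constant $2$ from the other case are both bounded above by some $C$ depending only on $\ukl$, $\okl$, $C_\pot$ and $I$; then $C_\mathrm{Weyl} := C\,\cp/\ukl$ works, and it is manifestly independent of $v$ and $\omega$. I do not expect any genuine obstacle here: the only point requiring a moment of care is that the restricted operator $H^{\Lambda_r(v)}(\omega)$ really falls under the hypotheses of the earlier proposition (finite graph, finite edge lengths, boundary conditions of the form \RBPLS, bounded self-adjoint perturbation), which follows directly from the definition of the restriction given in Section~\ref{ab:ITG} and from \eqref{gl_pot_stetig}; the rest is the elementary volume-counting argument above.
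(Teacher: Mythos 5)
Your proof is correct and follows exactly the route the paper intends: apply the preceding proposition on $n_{H+W}$ to the finite graph $\Lambda_r(v)$ with $W=\pot_\omega|_{\Lambda_r(v)}$ and $\|W\|\le C_\pot$, then bound $|E(v,r)|$ by $\ukl\,|E(v,r)|\le\vol(\Lambda_r(v))\le\cp\,r^d$ via \eqref{geom:uU} and \eqref{geom:poly}, and finally absorb the bounded‐interval case into a single constant $C_\mathrm{Weyl}$. You also implicitly (and rightly) correct what looks like a typo in the earlier proposition, replacing $\sqrt{\lambda}+\|W\|$ by $\sqrt{\lambda+\|W\|}$ before applying subadditivity of the square root.

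One small slip: you write that ``$\lambda\ge -\|W\|$ is implied by $\lambda\ge -C_\pot$,'' but since $\|W\|\le C_\pot$ one has $-\|W\|\ge -C_\pot$, so the implication goes the other way. This is harmless here: for $-C_\pot\le\lambda<-\|W\|$ the proposition already gives the sharper bound $2|E|$, which is dominated by the first‐case expression of the corollary, so the stated inequality still holds on all of $\lambda\ge -C_\pot$. It would be cleaner to say this explicitly rather than claim the wrong implication.
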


\section{Initial length scale and Wegner estimate}
\label{ab:ALAW}

In the last sections we proved estimates uniform in $\omega$. Now we will state two results which depend on $\omega$, namely the initial length scale estimate and the Wegner estimate.

With the initial length scale estimate we prove the spectrum of restrictions to finite subgraphs to lie at the lower bound of the spectrum with very low probability only. This will be concluded from an assumption on the single site measure, not to be concentrated at the lower end of its support, which is usually called \enquote{high disorder}:
\begin{ann}
\hypertarget{pot:alles}{}
\label{ann_unordnung}
Let $\Gamma$ be a metric graph with \eqref{geom:uU} and uniform polynomial growth of degree $d$. We assume the existence of a constant $\tau > \frac{3d}{2}-1$, such that
\begin{equation}
\label{pot:unord}\tag{pot:disord}
\mu([q_-,q_-+h])\leq h^\tau \qquad \text{for small } h.
\end{equation}
\end{ann}
For the proof $\tau > \tfrac{d}{2}$ would be sufficient, but for later reasons we choose another bound.
The proof of the initial length scale estimate is done as usual---see Theorem 3.2 in \cite{ExnerHS-07} for the Kirchhoff Laplace on $\gz^d$. Here the dimension $d$ is taken by the degree of the polynomial growth. 
By $\leos$ we denote the lower bound of $H^{P,L}+(q_- \nu_e)_{e \in E}$, which is the lower bound of the spectrum of all random operators $H^{P,L}(\omega)$.
\begin{satz}[Initial length scale estimate]
\label{satz:ALA}
Let $\Gamma$ be a metric graph which satisfies \geomalles\ and $(H^{P,L}(\omega))$  satisfy \RBPLS\ and \potalles.
Then for each $\xi\in (0,2\tau-d) $ there exists a $\beta>0$ (depending on $\tau$ and $\xi$) and a radius $\rone \geq \rgeom$, such that for all $v\in V$ and $r\geq \rone$ we have
\begin{equation*}
\PP\left\{ \dist\left(\sigma\left(H^{\Lambda_r(v)}(\omega)\right),\{\leos\}\right) \leq r^{\beta-2} \right\} \leq r^{-\xi}.
\end{equation*}
\end{satz}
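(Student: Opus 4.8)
The plan is to run the classical, elementary initial-length-scale argument (exactly as for Theorem~3.2 in \cite{ExnerHS-07}), which on a metric graph collapses to one deterministic operator inequality plus a union bound over the edges, the lattice dimension being replaced by the growth degree $d$. A useful first simplification: since $\leos\leq\inf\sigma(H^{P,L}(\omega))$ and, by Dirichlet bracketing, also $\leos\leq\inf\sigma(H^{\Lambda_r(v)}(\omega))$, one has $\dist\bigl(\sigma(H^{\Lambda_r(v)}(\omega)),\{\leos\}\bigr)=\inf\sigma(H^{\Lambda_r(v)}(\omega))-\leos$, so only the ground state energy is involved.

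First I would separate off the nonnegative part of the random potential. On the edges of $\Lambda_r(v)$ write $q_e(\omega)\nu_e=q_-\nu_e+(\omega_e-q_-)\nu_e$, so that $H^{\Lambda_r(v)}(\omega)=H_0^{\Lambda_r(v)}+W_\omega$, where $H_0^{\Lambda_r(v)}$ is the restriction (in the sense of Section~\ref{ab:ITG}) of $H^{P,L}+(q_-\nu_e)_{e\in E}$ to $\Lambda_r(v)$ and $W_\omega:=\bigl((\omega_e-q_-)\nu_e\bigr)_{e\in E(v,r)}\geq0$ is a multiplication operator on $L^2(X_{E(v,r)})$. Two observations then give a deterministic bound. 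First, extending a form-domain element of $\form_0^{\Lambda_r(v)}$ by zero to $X_E$ keeps it in $\dom(\form_L)$ and leaves the form value unchanged (admissible because the restricted form imposes Dirichlet conditions on $\rk{E(v,r)}$, only finitely many vertex terms occur, and $0\in\dom(L_w)$ for every vertex $w$, cf.\ the remark in Section~\ref{ab:ITG}); hence $\form_0^{\Lambda_r(v)}[f]\geq\leos\|f\|^2$, i.e.\ $\inf\sigma(H_0^{\Lambda_r(v)})\geq\leos$. Second, since $\omega_e-q_-\geq0$ and $\nu_e\geq c_-\ind_{I_e}$ by \eqref{pot:char}, one has $W_\omega\geq c_-\min_{e\in E(v,r)}(\omega_e-q_-)$ as a multiplication operator. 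Adding the two forms yields
\begin{equation*}
\inf\sigma\bigl(H^{\Lambda_r(v)}(\omega)\bigr)\;\geq\;\leos+c_-\min_{e\in E(v,r)}(\omega_e-q_-).
\end{equation*}

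The probabilistic step is then a plain union bound. From the last inequality,
\begin{equation*}
\Bigl\{\dist\bigl(\sigma(H^{\Lambda_r(v)}(\omega)),\{\leos\}\bigr)\leq r^{\beta-2}\Bigr\}\ \subseteq\ \bigcup_{e\in E(v,r)}\bigl\{\omega_e\leq q_-+c_-^{-1}r^{\beta-2}\bigr\},
\end{equation*}
and, using $|E(v,r)|\leq\vol(\Lambda_r(v))/\ukl\leq\cp\,r^d/\ukl$ from \eqref{geom:poly} and \eqref{geom:u} together with the high-disorder hypothesis \eqref{pot:unord} (applicable once $r$ is large enough that $c_-^{-1}r^{\beta-2}$ lies in its small-$h$ regime),
\begin{equation*}
\PP\bigl\{\dist(\sigma(H^{\Lambda_r(v)}(\omega)),\{\leos\})\leq r^{\beta-2}\bigr\}\;\leq\;\frac{\cp}{\ukl}\,r^d\bigl(c_-^{-1}r^{\beta-2}\bigr)^{\tau}\;=\;\frac{\cp}{\ukl\,c_-^{\tau}}\;r^{\,d+\tau(\beta-2)}.
\end{equation*}
Since $\xi\in(0,2\tau-d)$ we have $d+\xi<2\tau$, so I would fix $\beta>0$ with $\tau(2-\beta)>d+\xi$, i.e.\ $d+\tau(\beta-2)<-\xi$; then for all $r$ beyond a threshold $\rone\geq\rgeom$—chosen large enough to absorb the constant $\cp\,\ukl^{-1}c_-^{-\tau}$ and to enter the small-$h$ regime of \eqref{pot:unord}—the right-hand side is at most $r^{-\xi}$, which is the assertion.

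The one genuinely model-dependent ingredient, and hence the step I expect to be the real obstacle, is the bracketing inequality $\inf\sigma(H_0^{\Lambda_r(v)})\geq\leos$ for the general, merely lower-bounded vertex conditions \RBPLS; its justification via zero-extension is short but rests on the structural facts of Section~\ref{ab:ITG} ($0\in\dom(L_w)$ and the explicit form of the restricted operator). Everything else is verbatim the $\gz^d$ argument. Note that this proof only uses $\tau>d/2$, so that $(0,2\tau-d)\neq\varnothing$; the stronger bound $\tau>\tfrac{3d}{2}-1$ in Assumption~\ref{ann_unordnung} is needed only later.
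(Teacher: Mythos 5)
Your proof is correct and takes essentially the same route as the paper: decompose $H^{\Lambda_r(v)}(\omega)$ into the deterministic minimum-coupling operator plus a nonnegative remainder, lower-bound the ground state by $\leos$ plus a multiple of $\min_e(\omega_e-q_-)$, and control the probability of a small minimum by a union (equivalently, Bernoulli's inequality on the i.i.d.\ product) over the $O(\cp r^d/\ukl)$ edges, choosing $\beta$ with $\tau(2-\beta)>d+\xi$. You are somewhat more explicit than the paper on the Dirichlet-bracketing step $\inf\sigma(H_0^{\Lambda_r(v)})\geq\leos$ (the zero-extension argument), which the paper takes for granted; everything else, including the observation that only $\tau>d/2$ is needed here, matches.
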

\begin{proof} 
For a radius $r$ and a distance $h$ we define the set $\Omega_{r,h}$ as
\begin{equation*}
\Omega_{r,h}:=\{\omega \in\Omega \with \fa v\in V,~\fa e\in E(v,r) :  q_e(\omega)\geq q_-+h  \}.
\end{equation*}
With \eqref{pot:char} we conclude for all $f \in \dom(H^{P,L})$ with $\|f\|=1$:
\begin{equation*}
\left\langle H^{\Lambda_r(v)}(\omega) f,f \right\rangle \geq \leos  +\frac{h}{c_-}c_-=\leos  +h \qquad \text{for all } \omega \in \Omega_{r,\frac{h}{c_-}}.
\end{equation*}
Thus the probability that $H^{\Lambda_r(v)}(\omega)$ has spectrum in $[\leos,\leos+h]$ is greater or equal to the probability of the complementary event to $ \Omega_{r,\frac{h}{c-}}$
\begin{align*}
\PP\left(\Omega_{r,\frac{h}{c_-}}\right)&=\left(1- \mu\left(\left[q_-,q_-+\frac{h}{c_-} \right]\right) \right)^{|E(v,r)|}\\
&\geq 1-|E(v,r)| \cdot \mu\left(\left[q_-,q_-+\frac{h}{c_-} \right]\right) \\
&\hspace{-1.4cm} \underset{ \eqref{geom:poly},\eqref{pot:unord}}{\geq} 1-\frac{\cp\cdot r^d}{\ukl}\left(\frac{h}{c_-}\right)^\tau.
\end{align*}
For $\xi\in (0,2\tau-d)$ we choose $\beta$  with $\beta<\frac{2\tau-d-\xi}{\tau}$, such that $\xi < \tau(2-\beta)-d $ holds. Moreover we set $h:=r^{\beta-2}$. Then
\begin{equation*}
\PP\left(\Omega_{r,\frac{r^{\beta-2}}{c_-}}\right) \geq 1- \underbrace{\frac{\cp\cdot r^{\xi-\tau(2-\beta)+d}}{\ukl\cdot c_-^\tau}}_{(i)}\cdot  r^{-\xi},
\end{equation*}
where $(i) \leq 1$ for $r \geq \rone$ (where $\rone$ depends on $\ukl,\cp,c_-,\tau,d,\xi$ and $\beta$), as the exponent of $r$ in $(i)$ is smaller as zero.
\end{proof} 
Note that there is a proof of Lifshitz-type asymptotics for the metric graph $\gz^d$ and Kirchhoff boundary conditions in \cite{Sabri}, yielding localization for more general potentials.


The Wegner estimate gives an estimate over the mean value of the number of eigenvalues of a restriction to an induced subgraph in a small energy interval. 
\begin{defn}
For $\ep>0$  we define the modulus of continuity of a probability measure $\mu$ by
\begin{equation*}
s(\mu,\ep):=\sup\{ \mu([\lambda-\ep,\lambda+\ep]) \with\lambda \in \rz \}.
\end{equation*}
\end{defn}

In the next theorem we will state the Wegner estimate from theorem 6 in \cite{GruberHV-08}, adapted to our model. The estimates presented there are much more general, but cover the case presented here---see the remark after definition 3 in the second section in \cite{GruberHV-08}.
\begin{satz} 
\label{satz_wegner}
Let $\Gamma$ be a metric graph with \geomalles\ and $(H^{P,L}(\omega))$ a random operator with \RBPLS\ and \eqref{pot:char,dichte} on $\Gamma$.
Let $\lambda_0 \in \rz$. Then there exists a constant $C_\mathrm{W}=C_\mathrm{W}(\lambda_0)$, such that for all $\lambda\leq \lambda_0$, all finite sets of edges $\tilde{E}\subset E$ and all $\ep\leq \frac{1}{2}$ we have
\begin{equation*}
\label{gl_WA}
\mathbb E\left\{ \Tr\left( \ind_{[\lambda-\ep,\lambda+\ep]} \left(H^{\Gamma_{\tilde{E}}}(\omega)\right)  \right)  \right\}
\leq C_\mathrm{W}\cdot  s(\mu,\ep) \cdot|\tilde{E}|.
\end{equation*}
\end{satz}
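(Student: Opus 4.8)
The statement is a specialization of Theorem~6 in \cite{GruberHV-08}, so the plan splits into two parts: first, to check that the family $(H^{P,L}(\omega))$ restricted to a finite subgraph fits the framework of that reference; second, for a self-contained argument, to recall the spectral-averaging mechanism behind it.

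For the first part I would verify the structural hypotheses. The randomness enters only through the edgewise coupling constants $\omega_e$, which are i.i.d.\ with common distribution $\mu$, and whose regularity is measured precisely by $s(\mu,\ep)$. The single-site potentials satisfy the two-sided bound \eqref{pot:char}, in particular $\nu_e\geq c_-\ind_{I_e}$ with $c_->0$; since the intervals $I_e$, $e\in\tilde E$, exactly cover $X_{\tilde E}$, this gives the uniform positivity $\sum_{e\in\tilde E}\nu_e\geq c_-\ind_{X_{\tilde E}}$. The boundary conditions \RBPLS\ are local and produce a lower-bounded self-adjoint restriction $H^{\Gamma_{\tilde E}}(\omega)$ with associated form (as recorded in Section~\ref{ab:ITG}), which is precisely the class of operators covered by \cite{GruberHV-08}. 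Together these points let one quote the cited theorem with a constant $C_\mathrm{W}=C_\mathrm{W}(\lambda_0)$ independent of $\tilde E$; note that non-ergodicity is irrelevant here, since the estimate is for a fixed finite subgraph.

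For the (optional) self-contained part I would reproduce the spectral-averaging argument. Write $H^{\Gamma_{\tilde E}}(\omega)=H_0+\sum_{e\in\tilde E}\omega_e\,\nu_e$, where $H_0$ is the operator at $\omega\equiv q_-$; then $\partial_{\omega_e}H^{\Gamma_{\tilde E}}(\omega)=\nu_e\geq0$, so the eigenvalues are monotone in each $\omega_e$ and $\sum_e\partial_{\omega_e}H^{\Gamma_{\tilde E}}(\omega)\geq c_->0$ as a form inequality. Combining this positivity with Stollmann's version of spectral averaging (cf.\ \cite{Stollmann-01}) and the product structure $\PP=\bigotimes_e\mu$ yields
\begin{equation*}
\mathbb E\left\{\Tr\left(\ind_{[\lambda-\ep,\lambda+\ep]}\big(H^{\Gamma_{\tilde E}}(\omega)\big)\right)\right\}
\leq \frac{C}{c_-}\,s(\mu,\ep)\,\Tr\left(\ind_{(-\infty,\lambda_0+1]}\big(H^{\Gamma_{\tilde E}}(\omega)\big)\right).
\end{equation*}
The remaining trace is the eigenvalue-counting function $n_{H^{\Gamma_{\tilde E}}(\omega)}(\lambda_0+1)$, which by the uniform counting bound of Section~\ref{sec:WEYL} (applied with $W=\pot_\omega$, $\|W\|\leq C_\pot$) is at most $|\tilde E|\bigl[2+\tfrac{\okl}{\pi}(\sqrt{\lambda_0+1}+C_\pot)\bigr]$, i.e.\ a constant times $|\tilde E|$, uniformly in $\tilde E$ and $\omega$. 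Absorbing all constants into $C_\mathrm{W}(\lambda_0)$ gives the claim.

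The main obstacle is the spectral-averaging step: one must pass from the two-sided bound on a single $\nu_e$ to an averaging estimate for the whole sum $\sum_e\omega_e\nu_e$, which requires the covering property of the $I_e$ together with the monotonicity just noted, and one must keep the constant independent of the arbitrary finite edge set $\tilde E$. This last uniformity is exactly where the upper bound $\okl$ on edge lengths from \eqref{geom:uU} enters, through the Weyl-type bound on the counting function; without it the prefactor could not be made linear in $|\tilde E|$ with a uniform constant.
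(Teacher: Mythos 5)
Your primary route --- invoking Theorem~6 of \cite{GruberHV-08} after checking that the covering condition $\nu_e \geq c_-\ind_{I_e}$, the product structure $\PP=\bigotimes_e\mu$, and the \RBPLS\ boundary conditions place the restricted operator in that framework --- is exactly what the paper does; in fact the paper gives no proof at all beyond the pointer to the remark after Definition~3 in \cite{GruberHV-08}, so your hypothesis check is more explicit than what appears in the text. Your optional self-contained sketch (eigenvalue monotonicity in each $\omega_e$, the covering bound $\sum_e\nu_e\geq c_-\ind_{X_{\tilde E}}$, spectral averaging to produce $s(\mu,\ep)$, and the Weyl-type counting bound from Section~\ref{sec:WEYL} to make the prefactor linear in $|\tilde E|$) is a faithful outline of the mechanism behind the cited theorem and correctly identifies where \eqref{geom:uU} is needed for uniformity.
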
 
For induced subgraphs corresponding to balls with radius $r$, induced by edge sets $E(v,r)$, we get by the uniform polynomial growth
\begin{equation*}
|E(v,r)|\leq \frac{\cp \cdot r^d}{\ukl}.
\end{equation*}

\section{Existence of generalized eigenfunctions}
\label{ab:VEF}

The existence of generalized eigenfunctions (GEF) is an essential tool of the multiscale analysis. In this section we state the existence of GEF spectrally almost everywhere, which are of polynomial growth. Combined with the decay of the \enquote{local resolvent}---the result of the MSA---this yields localization.
Furthermore there is an expansion in generalized eigenfunctions, see \cite{PSW} for general theory and \cite{LSS-08} for results on metric graphs.

For the definition of GEF we will use the locally square integrable function spaces from section \ref{ab:ITG}.
\begin{defn}
\label{def:VEF}
Let $X$ be a topological space with measure $\dx$. Let $H$ be a local operator on $L^2(X)$, which means: with $f\in L^2_\mathrm{komp}(X)\cap \dom(H)$ also $Hf \in  L^2_\mathrm{komp}(X)$ holds and $\mathcal D :=\dom(H)\cap L^2_\mathrm{komp}(X)$ is a core for $H$, i.\,e. $H=\overline{H|_\mathcal D} $. 
We call a nontrivial function $f\in L^2_\mathrm{lok}(X)$ generalized eigenfunction for $H$ to $\lambda \in \kz$, if
\begin{equation*}
\langle H \phi | f \rangle=\lambda \langle \phi|f\rangle \qquad \text{for all }\phi\in \mathcal D.
\end{equation*}
\end{defn}
Next we will show, that this definition is applicable for $H^{P,L}(\omega)$.
\begin{hsatz} 
\label{hsatz:VEF_RB}
Let $\Gamma$ be a metric graph with \eqref{geom:u}, bounded vertex degree and $H^{P,L}(\omega)$ with \RBPLS\ and \eqref{pot:char,dichte}.
Then for $H^{P,L}(\omega)$ on $L^2(X_E)$ we have:
\begin{enumerate}
\item The operator $H^{P,L}(\omega)$ is local.
\item $\dom(H^{P,L}(\omega))\cap L^2\komp(X_E)$ is a core for $H^{P,L}(\omega)$.
\item Let $f$ be a generalized eigenfunction for $H^{P,L}(\omega)$. 
Then we have $f\in W^{2,2}\lok(X_E)$, $f$ satisfies the boundary condition of the form \RBPLS.
\end{enumerate}
\end{hsatz}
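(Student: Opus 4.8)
The three assertions are of increasing difficulty and I would treat them in order. For (1), locality is essentially built into the differential-operator structure: if $f\in\dom(H^{P,L}(\omega))\cap L^2\komp(X_E)$ then $H^{P,L}(\omega)f=-f''+\pot_\omega f$, and both $-f''$ (computed edgewise) and $\pot_\omega f$ (multiplication) have support contained in $\supp f$, hence $H^{P,L}(\omega)f\in L^2\komp(X_E)$; one only has to check that the support of $f$ stays in $\mathcal K$, i.e. that no boundary vertex of an induced subgraph gets created, which is immediate since multiplication and edgewise differentiation do not enlarge supports. So (1) reduces to (2), the core property, which is the main point.

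For (2), the plan is to show $\mathcal D:=\dom(H^{P,L}(\omega))\cap L^2\komp(X_E)$ is dense in $\dom(H^{P,L}(\omega))$ in the graph norm. Since $\pot_\omega$ is a bounded perturbation (estimate \eqref{gl_pot_stetig}), it suffices to establish this for $H^{P,L}$ itself. I would fix $f\in\dom(H^{P,L})\subset W^{2,2}(X_E)$ and approximate it by functions supported on finitely many edges: using the uniform lower bound \eqref{geom:u} on edge lengths and bounded vertex degree, exhaust $\Gamma$ by an increasing sequence of finite induced subgraphs $\Gamma_{E_n}$ and multiply $f$ by cut-off functions $\chi_n$ that are $\equiv 1$ on $X_{E_{n-1}}$, $\equiv 0$ outside $X_{E_n}$, constant near every vertex, with $\|\chi_n'\|_\infty,\|\chi_n''\|_\infty$ uniformly bounded (such functions exist by \eqref{geom:u}). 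The function $\chi_n f$ then lies in $W^{2,2}\komp(X_E)$; because $\chi_n$ is constant near each vertex and real, $\tr{v}(\chi_n f)=c_v\tr{v}(f)$ and $\str{v}((\chi_n f)')=c_v\str{v}(f')$, so the vertex conditions $L_v\tr{v}(\cdot)=(1-P_v)\str{v}((\cdot)')$ are preserved and $\chi_n f\in\mathcal D$. Finally $\chi_n f\to f$ in $L^2(X_E)$ by dominated convergence, and $H^{P,L}(\chi_n f)=-\chi_n f''-2\chi_n' f'-\chi_n'' f\to -f''$ in $L^2(X_E)$, using $f',f''\in L^2$ and the uniform bounds on the derivatives of $\chi_n$ together with the fact that $\chi_n',\chi_n''$ are supported on the "annulus" $X_{E_n}\setminus X_{E_{n-1}}$ whose $f$-, $f'$-mass tends to zero. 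This gives graph-norm convergence, hence $\mathcal D$ is a core.

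For (3), let $f\in L^2\lok(X_E)$ be a generalized eigenfunction, so $\langle H^{P,L}(\omega)\phi\mid f\rangle=\lambda\langle\phi\mid f\rangle$ for all $\phi\in\mathcal D$. First I would test against $\phi\in C\komp^\infty(I_e)$ supported in the interior of a single edge $e$; this gives $-\phi''+\omega_e\nu_e\phi$ paired with $f$ equals $\lambda\phi$ paired with $f$, i.e. $f_e$ is a weak (hence, by elliptic regularity on the interval, $W^{2,2}\lok$) solution of $-f_e''+(\omega_e\nu_e-\lambda)f_e=0$ on $I_e$; doing this edgewise yields $f\in W^{2,2}\lok(X_E)$ and $-f''+\pot_\omega f=\lambda f$ in the strong edgewise sense. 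Then I would test against $\phi\in\mathcal D$ that is nonzero at a single vertex $v$ and integrate by parts on each incident edge: the bulk terms cancel by the edgewise equation, leaving a vertex term of the form $\langle(1-P_v)\str{v}(f')-L_v\tr{v}(f),\tr{v}(\phi)\rangle=0$ for all admissible $\tr{v}(\phi)$, which forces $\tr{v}(f)\in\dom(L_v)$ and $L_v\tr{v}(f)=(1-P_v)\str{v}(f')$, i.e. $f$ satisfies the \RBPLS\ boundary condition.

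**Main obstacle.** The delicate point is part (2): one must construct the cut-off functions $\chi_n$ so that they are \emph{constant near every vertex} (to preserve the nonlocal vertex conditions), have uniformly bounded first and second derivatives (depending only on $\ukl$), and localize to finitely many edges; keeping the vertex conditions intact under truncation is exactly the feature that is automatic for $\delta$-type conditions but must be argued here, and the uniform bound \eqref{geom:u} together with bounded vertex degree is what makes it possible. Part (3)'s vertex-term bookkeeping — rearranging the (possibly infinite) sums over incident edges and justifying the integration by parts against locally-$L^2$ functions paired with compactly supported test functions — is routine but needs the $\langle\cdot\mid\cdot\rangle$ pairing machinery set up in section \ref{ab:ITG}.
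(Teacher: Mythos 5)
The paper disposes of this proposition by observation and citation: part~(1) is declared trivial (a differential operator plus a multiplication operator preserves supports), and parts~(2) and~(3) are referred to \cite{LSS-08}, Proposition~5.3, with the parenthetical remark that the proof there uses compactly supported cut-off functions localized near a vertex and the edgewise identity $f''=\pot_\omega f-\lambda f$. Your proposal supplies a direct, self-contained argument in place of the citation, and the route you take is precisely the one the paper's remark hints at: truncation by cut-off functions that are constant near every vertex (so that the nonlocal $(P_v,L_v)$ conditions survive multiplication, because $\tr{v}(\chi f)=c_v\tr{v}(f)$ and $\str{v}((\chi f)')=c_v\str{v}(f')$), graph-norm convergence using that $f,f'\in L^2$ and the annular support of $\chi_n',\chi_n''$, edgewise elliptic regularity to get $f\in W^{2,2}\lok$, and then vertex-by-vertex integration by parts to recover the boundary conditions. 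So the content is essentially the same; you have just unpacked what the paper delegates. Two small things deserve a sentence more care in a full write-up. First, in part~(2), the condition $\supp(\chi_n f)\in\mathcal K$ requires that $\chi_n$ vanish near every vertex in $\rk{E_n}$, which is compatible with $\chi_n\equiv1$ on $X_{E_{n-1}}$ only if $V_{E_{n-1}}\subset\ik{E_n}$ — i.e., $E_n$ must contain all edges incident to $V_{E_{n-1}}$; this is easy to arrange (e.g. $E_n=E(v_0,r_n)$ with $r_n-r_{n-1}\geq 2\okl$) but should be said. Second, in part~(3), extracting $P_v\tr{v}(f)=0$ and $L_v\tr{v}(f)=(1-P_v)\str{v}(f')$ from the vanishing boundary pairing is not a single linear-algebra step but needs the two-stage argument (first take $\tr{v}(\phi)=0$, $\str{v}(\phi')\in\operatorname{ran}P_v$ to kill the $P_v$-component of $\tr{v}(f)$; then let $\tr{v}(\phi)$ range over $\dom(L_v)$), which uses self-adjointness of the vertex condition; your "which forces" glosses over this, though the conclusion is correct.
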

\begin{proof}
{\it 1.} is trivial for the negative Laplacian and potentials acting as multiplication operators.
{\it 2.} and {\it 3.} can be found in \cite{LSS-08} in Proposition 5.3. (where compactly supported cut-off functions, supported in a small neighborhood of a vertex are used; note $f''=\pot_\omega f-\lambda f$ holds for the second weak derivative).
\end{proof}

Thus a GEF satisfies the boundary conditions and an eigenvalue equation---where the operator is replaced by its map:
\begin{equation*}
-\Delta f +\pot_\omega f=\lambda f.
\end{equation*}
Thereby we can construct an estimate of the norm of the first derivative, by the norm of the GEF.
\begin{hsatz} 
\label{hsatz:VEF:kegel}
Let $\Gamma$ be a metric graph with \eqref{geom:uU}, $(H^{P,L}(\omega))$ a random operator with \RBPLS\ and \eqref{pot:char,dichte}.
Let $f$ be a generalized eigenfunction for $\lambda \in \sigma(H^{P,L}(\omega))$. 
Then there is a constant $C_\mathrm{cone}$, depending only on $|\lambda|$, $\ukl$ and $C_\pot$, such that for all edges $e\in E$ we have the estimate
\begin{equation*}
\|f'\|^2_{L^2(I_e)}\leq C_\mathrm{cone} \|f\|^2_{L^2(I_e)}.
\end{equation*}
\end{hsatz}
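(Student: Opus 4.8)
The plan is to reduce the assertion to an elementary one-dimensional estimate: on a single edge a generalized eigenfunction solves a linear second order ODE with a bounded coefficient, so the vertex boundary conditions play no role, which is why the constant will only depend on $|\lambda|$, $\ukl$ and $C_\pot$. First I would use Proposition~\ref{hsatz:VEF_RB}: $f$ lies in $W^{2,2}\lok(X_E)$ and satisfies $-\Delta f+\pot_\omega f=\lambda f$ strongly on every edge. Fixing an edge $e$ and writing $g:=f_e\in W^{2,2}(I_e)$ with $I_e=(0,l)$ and $\ukl\le l\le\okl$, this becomes
\begin{equation*}
g''=Wg\quad\text{a.e. on }(0,l),\qquad W:=q_e(\omega)\,\nu_e-\lambda,
\end{equation*}
where $\lambda\in\rz$ since $H^{P,L}(\omega)$ is self-adjoint, and $\|W\|_{L^\infty(I_e)}\le C_\pot+|\lambda|=:M$ because $|q_e(\omega)\nu_e|\le C_\pot$ (see \eqref{pot:char}). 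If $g\equiv0$ there is nothing to prove; otherwise everything below uses only $M$, $\ukl$ and $\|g\|_{L^2(I_e)}$.

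The main step is a scalar Caccioppoli bound on a sub-interval, the one-dimensional analogue of Lemma~\ref{hsatz_ru2} with the $L_v$-terms absent: testing $g''=Wg$ against $\psi^2\bar g$ with a cut-off $\psi$ such that $\supp\psi\subset(\alpha,\alpha+\ukl)$, $\psi\equiv1$ on $(\alpha+\tfrac{\ukl}{4},\alpha+\tfrac{3\ukl}{4})$ and $|\psi'|\le c/\ukl$, and absorbing the cross term by Young's inequality, yields
\begin{equation*}
\int_{\alpha+\ukl/4}^{\,\alpha+3\ukl/4}|g'|^2\ \le\ C_0\,\|g\|_{L^2(I_e)}^2,\qquad C_0=C_0(M,\ukl).
\end{equation*}
Since $l\ge\ukl$, I apply this on $(0,\ukl)$ and on $(l-\ukl,l)$; combined with the trivial bound $\int|g|^2\le\|g\|_{L^2(I_e)}^2$ and Chebyshev's inequality, this lets me pick points $a\in(\tfrac{\ukl}{4},\tfrac{3\ukl}{4})$ and $b\in(l-\tfrac{3\ukl}{4},l-\tfrac{\ukl}{4})$ at which $|g|^2$ and $|g'|^2$ are both $\le\tfrac{8C_0}{\ukl}\|g\|_{L^2(I_e)}^2$.

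Next I would propagate these bounds to the endpoints of $I_e$ through the integrated forms of the equation,
\begin{equation*}
g'(t)=g'(a)+\int_a^t W g\,ds,\qquad g(t)=g(a)+g'(a)(t-a)+\int_a^t(t-s)\,W(s)g(s)\,ds,
\end{equation*}
noting that for $t\in[0,\ukl]$ one has $|t-a|\le\ukl$ and $\int_0^\ukl|g|\le\sqrt{\ukl}\,\|g\|_{L^2(I_e)}$; this bounds $|g(0)|$ and $|g'(0)|$, and symmetrically (using $b$) $|g(l)|$ and $|g'(l)|$, by $C_1\|g\|_{L^2(I_e)}$ with $C_1=C_1(M,\ukl)$. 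Finally, integrating by parts on $(0,l)$, which is legitimate since $g\in W^{2,2}(I_e)$, and using that $W$ is real,
\begin{equation*}
\|g'\|_{L^2(I_e)}^2=\re\!\big(g'(l)\overline{g(l)}\big)-\re\!\big(g'(0)\overline{g(0)}\big)-\int_0^l W|g|^2\ \le\ \big(2C_1^2+M\big)\|g\|_{L^2(I_e)}^2,
\end{equation*}
which is the claim with $C_\mathrm{cone}=2C_1^2+M$, depending only on $C_\pot$, $|\lambda|$ and $\ukl$.

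The one genuinely delicate point is keeping all constants independent of the edge length $l(e)$, hence of $\okl$: this is precisely why the boundary values of $g$ are estimated through a sub-interval of length $\ukl$ adjacent to each endpoint (which uses $l\ge\ukl$) rather than through the whole edge, and it is the reason the hypothesis \eqref{geom:uU} enters the final constant only via $\ukl$.
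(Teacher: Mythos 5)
Your proof is correct, and it takes a genuinely different route from the paper. The paper's own argument is a one-liner: it quotes the interpolation inequality for domains satisfying the cone condition (Adams--Fournier, Lemma~5.5), namely $\|f_e'\|\le C_\mathrm{K}(\ep\|f_e''\|+\ep^{-1}\|f_e\|)$ for all $\ep\le \ukl$ with $C_\mathrm{K}$ depending only on the cone (hence only on $\ukl$ uniformly over the edges), and then substitutes the equation $f_e''=(\pot_\omega-\lambda)f_e$. You instead give a self-contained derivation: a scalar Caccioppoli estimate on an interior sub-interval of length $\ukl$, a Chebyshev selection of a good point, propagation of pointwise bounds to the two endpoints by integrating the ODE, and finally an integration by parts over the whole edge. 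Both arguments are sound; the paper's is shorter because it outsources the analytic content to a cited lemma, while yours is more elementary and makes it transparent, without a black box, exactly why the constant depends only on $\ukl$ (and not on $\okl$): the endpoint data is controlled through a sub-interval of length $\ukl$ abutting each end, which is also the observation behind the paper's appeal to a cone of fixed size $\ukl$.

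One small remark: your final display already shows that, once endpoint bounds of the form $|g(0)|,|g'(0)|,|g(l)|,|g'(l)|\le C_1\|g\|$ are available, a single integration by parts over all of $(0,l)$ suffices; the interior Caccioppoli step serves only to produce those endpoint bounds. This is a perfectly sensible structure, but it is worth noting that one cannot shortcut it by applying the Caccioppoli estimate directly on a boundary sub-interval, since then the cut-off cannot vanish at the edge endpoint and a boundary term reappears --- so the detour through an interior good point and the integral form of the ODE is genuinely needed.
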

\begin{proof}
If a domain $\Omega$ satisfies the cone condition---which is obviously true for edges---the first derivative of a $W^{2,2}(\Omega)$-function can be bounded by the second derivative and the function itself in the following way
\begin{equation*}
\|f'_e\|_{L^2(I_e)} \leq C_\mathrm{K} \left(\ep \|f''\|_{L^2(I_e)}+\ep^{-1}\|f\|_{L^2(I_e)}  \right),
\end{equation*}
for all $\ep\leq \ukl$ and a constant $C_\mathrm{K}$, depending only on the relevant cone (see lemma 5.5 in \cite{AdamsF}).
With $f''=\pot_\omega f -\lambda f$ we get
\begin{equation*}
\|f'_e\|_{L^2(I_e)} \leq C_\mathrm{K} \left(\ep (C_\pot+|\lambda|)\|f\|_{L^2(I_e)}+\ep^{-1}\|f\|_{L^2(I_e)}  \right). \qedhere
\end{equation*}
\end{proof}

An expansion in generalized eigenfunctions for metric graphs was proven in \cite{LSS-08}, using general theory from \cite{PSW}.
From corollary 5.4 in \cite{LSS-08} we deduce:
\begin{folg} 
\label{cor:VEF}
Let $\Gamma$ be a metric graph with \eqref{geom:u} and $H^{P,L}(\omega)$ an operator with \RBPLS\ and \eqref{pot:char,dichte}.
Let $\varrho$ be a spectral measure for $H^{P,L}(\omega)$ and $w:X_E\to [1,\infty)$ a weight function with $w^{-1}\in L^2(X_E)$. Then 
there is a generalized eigenfunction $f_{\lambda}$ for $H^{P,L}(\omega)$ for $\varrho$-almost every spectral value $\lambda \in \sigma(H^{P,L}(\omega))$ with the property $w^{-1}f_\lambda \in L^2(X_E)$.
\end{folg}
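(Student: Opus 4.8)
The statement is a direct application of the abstract theory of expansions in generalized eigenfunctions, so the plan is to verify the hypotheses of corollary~5.4 in \cite{LSS-08} (which rests on \cite{PSW}) for $H:=H^{P,L}(\omega)$ on $\hil:=L^2(X_E)$. First I would recall what is already in place: by proposition~\ref{hsatz:VEF_RB} the operator $H$ is local on $\hil$ and $\mathcal D:=\dom(H)\cap L^2\komp(X_E)$ is a core for $H$, so the notion of generalized eigenfunction of definition~\ref{def:VEF} is meaningful, and any such function automatically lies in $W^{2,2}\lok(X_E)$ and satisfies \RBPLS.

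The rigging is provided by $T:=M_{w^{-1}}$, multiplication by $w^{-1}$. Since $1\le w<\infty$ pointwise, $w^{-1}\in L^\infty(X_E)$ with $\|w^{-1}\|_\infty\le 1$, so $T\in\mathcal B(\hil)$, and $T$ is injective with dense range because $w^{-1}>0$ everywhere. This $T$ implements the triple $L^2(X_E,w^2\,dx)\hookrightarrow\hil\hookrightarrow L^2(X_E,w^{-2}\,dx)$, and the asserted property $w^{-1}f_\lambda\in\hil$ is precisely the statement that $f_\lambda$ lies in the larger space $L^2(X_E,w^{-2}\,dx)$ on the right, where the generalized eigenfunctions produced by the abstract theorem live. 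The only hypothesis still requiring an argument is the Hilbert--Schmidt condition, i.e.\ that $T(H+i)^{-1}$ is Hilbert--Schmidt on $\hil$. Here is how I would obtain it: $(H+i)^{-1}$ maps $\hil$ boundedly into $\dom(H)$ with the graph norm; since $Hf=-f''+\pot_\omega f$ and $\|\pot_\omega\|\le C_\pot$ by \eqref{gl_pot_stetig}, the graph norm of $H$ controls $\|f''\|_{L^2(X_E)}$, and by the edge-wise interpolation inequality used in the proof of proposition~\ref{hsatz:VEF:kegel} (uniform over all edges thanks to the lower bound \eqref{geom:u}) it controls the full $W^{2,2}(X_E)$-norm; as the Sobolev embedding $W^{2,2}(I_e)\hookrightarrow L^\infty(I_e)$ has a constant uniform in all edges (again because $l(e)\ge\ukl$), it follows that $(H+i)^{-1}\colon\hil\to L^\infty(X_E)$ is bounded. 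Hence its integral kernel $R$ obeys
\begin{equation*}
\kappa:=\operatorname{ess\,sup}_{x\in X_E}\|R(x,\cdot)\|_{L^2(X_E)}=\big\|(H+i)^{-1}\big\|_{L^2(X_E)\to L^\infty(X_E)}<\infty,
\end{equation*}
and therefore, using $w^{-1}\in L^2(X_E)$,
\begin{equation*}
\big\|T(H+i)^{-1}\big\|_{\mathrm{HS}}^2=\int_{X_E}|w^{-1}(x)|^2\Big(\int_{X_E}|R(x,y)|^2\,dy\Big)\,dx\le\kappa^2\,\|w^{-1}\|_{L^2(X_E)}^2<\infty.
\end{equation*}

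With locality, the core property and this Hilbert--Schmidt bound at hand, corollary~5.4 of \cite{LSS-08} applies to any spectral measure $\varrho$ of $H$ and delivers, for $\varrho$-almost every $\lambda\in\sigma(H)$, a nontrivial $f_\lambda\in L^2\lok(X_E)$ with $\langle H\phi|f_\lambda\rangle=\lambda\langle\phi|f_\lambda\rangle$ for all $\phi\in\mathcal D$ and with $f_\lambda\in L^2(X_E,w^{-2}\,dx)$, that is $w^{-1}f_\lambda\in L^2(X_E)$, which is the claim. The one step that needs genuine care is the Hilbert--Schmidt estimate: on an infinite metric graph the resolvent kernel must be controlled on infinitely many edges at once, and it is precisely the uniform lower bound on the edge lengths (giving edge-uniform elliptic and Sobolev estimates) together with the uniform potential bound \eqref{gl_pot_stetig} that make $(H+i)^{-1}$ bounded from $\hil$ into $L^\infty(X_E)$; once $\sup_x\|R(x,\cdot)\|_{L^2}<\infty$ is known, the integrability of $|w^{-1}|^2$ finishes the argument. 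Alternatively, the off-diagonal decay of the resolvent furnished by a complex-energy version of the Combes--Thomas estimate (theorem~\ref{satz_CTA}) could be summed over the edges, but the Sobolev route above is shorter.
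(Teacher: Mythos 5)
Your approach matches the paper's: both deduce the statement directly from corollary~5.4 of \cite{LSS-08} (itself resting on \cite{PSW}). The paper simply cites that result, whereas you additionally verify its Hilbert--Schmidt hypothesis via the edge-uniform Sobolev embedding $W^{2,2}(I_e)\hookrightarrow L^\infty(I_e)$ and the bound $w^{-1}\in L^2(X_E)$ --- a correct elaboration of what the paper leaves implicit.
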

For the definition of a weight function we have to fix a root vertex $0\in V$. This vertex is an arbitrary one, but has to stay fixed in the following.

\begin{bem}{(a weight function)} 
\label{bem:VEF:gewfkt}
Let $\Gamma$ be a metric graph with \geomalles.
Let $m>\frac{d+1}{2}$ and $\we:X_E \to [1,\infty)$ be defined by
\begin{equation*}
\we(x):=(1+\metrik(x,0))^m.
\end{equation*}
Then $\we$ is a weight function in the sense of the last corollary, i.\,e. $\we^{-1}\in L^2(X_E)$.
\end{bem}
\begin{proof} 
We will estimate the $L^2$-norms of $\we^{-1}$ over the annuli with radius $n$. The uniform polynomial growth of the graph yields a uniform growth of the volumes of the balls:
\begin{align*}
\int\limits_{X_E}\| \we^{-1}(x) \|^2 \dx&=\sum\limits_{n=1}^\infty \int\limits_{B_n(0)\setminus B_{n-1}(0)} \we^{-2}(x)\dx 
\leq \sum\limits_{n=1}^\infty \int\limits_{B_n(0)\setminus B_{n-1}(0)} n^{-2m}\dx\\
&\leq\sum\limits_{n=1}^\infty \vol(B_n(0)) n^{-2m}
\leq  \cp \sum\limits_{n=1}^\infty n^{d-2m}<\infty,
\end{align*}
as $m>\frac{d+1}{2}$.
\end{proof}
The weight function and the last corollary yield polynomial growth of the generalized eigenfunctions. We have to choose $m$ small, to get a low growth rate:
\begin{hsatz}
\label{hsatz:VEF:poly}
Let $\Gamma$ be a metric graph with \geomalles\ and $(H^{P,L}(\omega))$ a random operator with \RBPLS\ and \eqref{pot:char,dichte}.
Then for each generalized eigenfunction for $H^{P,L}(\omega)$, for all verteices $v\in V$ and all radii $R>5\okl $ 
\begin{align*}
\left\|\ind_{\Lambda_R(v)} f \right\|\leq C_{\mathrm{poly}\hspace{-1pt}\nearrow} \cdot  R^d\cdot(1+\metrik(v,0) +R+\okl)^{\frac{d+2}{2}}
\end{align*}
holds, where $C_{\mathrm{poly}\hspace{-1pt}\nearrow}$ depends only on $\cp$ and $f$.
\end{hsatz}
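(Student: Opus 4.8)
The plan is to read the estimate off directly from the generalized-eigenfunction expansion, Corollary \ref{cor:VEF}, together with the explicit weight of Remark \ref{bem:VEF:gewfkt}. First I would fix the exponent $m:=\tfrac{d+2}{2}$. Since $m>\tfrac{d+1}{2}$, Remark \ref{bem:VEF:gewfkt} applies and $\we(x)=(1+\metrik(x,0))^m$ is an admissible weight function, i.e. $\we^{-1}\in L^2(X_E)$ (the series $\cp\sum_n n^{d-2m}=\cp\sum_n n^{-2}$ converges). Corollary \ref{cor:VEF} then guarantees $\we^{-1}f\in L^2(X_E)$, so that $C_f:=\|\we^{-1}f\|_{L^2(X_E)}<\infty$; this quantity, finite thanks to $\cp$ and the choice of $m$, will serve as the constant $C_{\mathrm{poly}\hspace{-1pt}\nearrow}$.

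The second step is a uniform bound for $\we$ on the ball $\Lambda_R(v)$. As noted right after Definition \ref{def:ball}, every point of $\Lambda_R(v)$ has distance less than $R+\okl$ from its centre $v$, so the triangle inequality gives $\metrik(x,0)\le\metrik(x,v)+\metrik(v,0)<R+\okl+\metrik(v,0)$ for all $x\in X_{E(v,R)}$, hence
\begin{equation*}
\we(x)\le\bigl(1+\metrik(v,0)+R+\okl\bigr)^{\frac{d+2}{2}}\qquad\text{for all }x\in X_{E(v,R)}.
\end{equation*}
Writing $|f|^2=\we^2\,|\we^{-1}f|^2$ and integrating over $X_{E(v,R)}$ gives
\begin{equation*}
\bigl\|\ind_{\Lambda_R(v)}f\bigr\|^2=\int_{X_{E(v,R)}}\we(x)^2\,\bigl|\we(x)^{-1}f(x)\bigr|^2\dx\le\Bigl(\sup_{X_{E(v,R)}}\we\Bigr)^2\|\we^{-1}f\|_{L^2(X_E)}^2,
\end{equation*}
and taking square roots, $\|\ind_{\Lambda_R(v)}f\|\le C_f\,(1+\metrik(v,0)+R+\okl)^{\frac{d+2}{2}}$. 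Because $R>5\okl$ is bounded below by a fixed positive number, the extra factor $R^d$ in the claimed inequality can be absorbed after enlarging the constant, which finishes the proof.

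I do not anticipate a real obstacle; the only points requiring care are (a) that $m=\tfrac{d+2}{2}$ indeed satisfies $m>\tfrac{d+1}{2}$, so that Remark \ref{bem:VEF:gewfkt} is available, and (b) that the weight $\we$ is centred at the fixed root $0$, not at $v$, which is exactly why the shift $\metrik(v,0)$ appears in the bound. One could instead estimate $\we$ annulus by annulus around $v$, or invoke the cone estimate of Proposition \ref{hsatz:VEF:kegel}, but neither is needed and both give the same power of $1+\metrik(v,0)+R+\okl$.
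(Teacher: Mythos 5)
Your proof is correct, and it takes a genuinely different and simpler route than the paper's. The paper proves the estimate by first covering $\Lambda_R(v_0)$ with roughly $\cp\,R^d/s$ balls of a fixed smaller radius $r$ (via lemma \ref{hsatz_ueberdeckung} and lemma \ref{hsatz_rasterzahl}), then invoking a uniform local bound $\sup_v\|\ind_{\Lambda_r(v)}\we^{-1}f\|\le C_r$ and summing; that covering argument is exactly the source of the $R^d$ factor in the stated inequality. You instead observe that the weight $\we(x)=(1+\metrik(x,0))^{\frac{d+2}{2}}$ can be bounded pointwise on $\Lambda_R(v)$ by $(1+\metrik(v,0)+R+\okl)^{\frac{d+2}{2}}$ and then factor $|f|^2=\we^2\,|\we^{-1}f|^2$ under the integral, which yields directly
\begin{equation*}
\left\|\ind_{\Lambda_R(v)}f\right\|\le \left\|\we^{-1}f\right\|_{L^2(X_E)}\cdot\bigl(1+\metrik(v,0)+R+\okl\bigr)^{\frac{d+2}{2}},
\end{equation*}
a strictly stronger statement with no $R^d$ factor at all. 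As you note, the extra factor in the paper's claim is harmless because $R>5\okl$ bounds it below, so the stated inequality follows by enlarging the constant. Your argument avoids the covering machinery entirely and reveals that the polynomial factor $R^d$ is an artefact of the covering approach rather than intrinsic to the bound; this is a nice simplification. One small remark: both your proof and the paper's rely on Corollary \ref{cor:VEF} to know that $\we^{-1}f\in L^2(X_E)$, which strictly speaking is furnished only for the generalized eigenfunctions occurring in the expansion ($\varrho$-a.e. $\lambda$), not for an arbitrary generalized eigenfunction — but this interpretational issue is shared with the paper's own proof and is not a defect of your argument.
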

\begin{proof}
By remark \ref{bem:VEF:gewfkt} we see, that $w(x)=(1+\metrik(x,0))^\frac{d+2}{2}$ is a weight function, such that $w^{-1} f \in L^2(X_E)$ for all generalized eigenfunctions $f$ for $H^{P,L}(\omega)$.
For all $r>5\okl$ we have
\begin{equation*}
\sup\limits_{v\in V} \left\| \ind_{\Lambda_r(v)} w^{-1}f\right\|\leq C_r < \infty.
\end{equation*}
Otherwise a contradiction to $ w^{-1}f $ lying in $L^2(X_E)$ could be found, using the polynomial growth of the graph.
For a point $x \in X_{E(v_0,R)} $ we find $\metrik(x,0)\leq \metrik(v_0,0)+R+\okl$.
Inserting this point in the weight function we get
\begin{equation*}
w^{-1}(x)\geq \left( 1+\metrik(v_0,0)+R+\okl\right)^{-\frac{d+2}{2}}.
\end{equation*}
The ball $\Lambda_R(v_0) $ can be covered with balls with radius $r$, if we choose a vertex-raster $V_{R,s}(v_0)$, with $s=\frac{r-5\okl}{3}$, by lemma \ref{hsatz_ueberdeckung}. By lemma \ref{hsatz_rasterzahl} this raster has at most $\cp\,\frac{R^d}{s} $ elements. This yields
\begin{align*}
\left( 1+\metrik(v_0,0)+R+\okl\right)^{-\frac{d+2}{2}} \cdot \left\| \ind_{\Lambda_R(v_0)}f \right\|_{L^2(X_E)} &\leq  \left\| \ind_{\Lambda_R(v_0)} f w^{-1}\right\| \\
&\leq  \sum\limits_{v\in V_{R,s}(v_0)} \left\|\ind_{\Lambda_r(v)} f w^{-1}\right\|\\
&\leq \frac{3\,\cp\cdot R^d}{r-5\okl}\, C_r
\end{align*}
and hence the assertion with a constant depending only on $\cp$, $f$ and $r$, where $r$ is a free parameter of no importance.
\end{proof}


\section{Multiscale analysis}
\label{ab:msa}
The aim of the multiscale analysis is to provide exponential or polynomial decay of the form
\begin{equation*}
\left\| \ind_{\Lout_r(v)} \big(H^{\Lambda_r(v)}(\omega)-\lambda\big)^{-1}\ind_{\Lint_r(v)} \right\| \leq e^{-\gamma\cdot r}, \qquad\text{or } \leq r^{-n}
\end{equation*}
for different length scales $r$. The estimate will be proven inductively and won't be true for all $\omega$. The probability of those events has to go to one with the length scales going to infinity.

To apply the MSA to metric graphs we modify the MSA from \cite{Stollmann-01}, mainly by using the subgraphs from section \ref{ab:ITG} and coverings from section \ref{ab:WG}. As the covering is not as precise as in the $\gz^d$-case, we will get polynomial decay only. This will be explained in detail later. We have to adjust definitions and induction parameters. For another MSA with polynomial decay see \cite{FischerML-00}.

\begin{defn}
Let $\Gamma$ be a metric graph with \geomalles\ and $(H^{P,L}(\omega))$ a random operator with \RBPLS\ and \potalles.
Let $n>0$, $r>0$ and $v\in V$. The induced subgraph $\Lambda_r(v) \subset \Gamma $ is called $(n,\lambda,\omega)$-good, if
$\lambda \in \res(H^{\Lambda_r(v)}(\omega))$ and
\begin{equation*}
\left\| \ind_{\Lout_r(v)} \big(H^{\Lambda_r(v)}(\omega)-\lambda\big)^{-1}\ind_{\Lint_r(v)} \right\| \leq r^{-n}.
\end{equation*}
Otherwise $\Lambda_r(v)$ is called $(n,\lambda,\omega)$-bad.
\end{defn}

\begin{defn}
Let $I\subset \rz$ be an interval, $r>0$ a radius and $n,\xi,\theta,q>0$. We define the following logic assertions:
\begin{enumerate}
 \item $G(I,r,n,\xi)$:
	For all $v_1$, $v_2\in V$, with $\Lambda_r(v_1)$ and $\Lambda_r(v_2)$ are disjoint, we have the estimate
	\begin{equation*}
	  \PP \left\{\omega\in \Omega \text{ with } \fa \lambda\in I :\Lambda_r(v_1) \text{ or }\Lambda_r(v_2)  \text{ is }(n,\lambda,\omega)\text{-good}\right\} \geq 1-r^{-2\xi}.
	\end{equation*}
 \item $W(I,r,\theta,n,q)$:
	For all $\lambda\in I$ and all balls $\Lambda_r(v) \subset \Gamma$ we have
	\begin{equation*}
	 \PP \left\{\omega\in \Omega \text{ with } \dist\left(\sigma(H^{\Lambda_r(v)}(\omega)),\{\lambda\}\right) \leq r^{-\theta n} \right\} \leq r^{-q}.
	\end{equation*}
\end{enumerate}
\end{defn}
The aim of the multiscale analysis is to prove $G(I,r,n,\xi)$ for a sequence of radii $(r_k)$ with $r_k\to \infty$, while fixing the other three parameters.

The assertion $W(I,r,\theta,n,q)$ is called weak Wegner estimate and follows from the Wegner estimate. A proof for a metric graph over $\gz^d$ can be found in lemma 13 in \cite{GruberHV-08} and adapted to the general metric graph case:
\begin{bem} 
\label{bem:W}
Let $\Gamma$ be a metric graph with \geomalles\ and $(H^{P,L}(\omega))$ a random operator with \RBPLS\ and \potalles.
Let $\theta$, $q>0$ with $q<\theta n-d$ and $I$ a bounded interval in $\rz$. Here $d$ stands for the degree of polynomial growth of the graph.
Then there is a radius $\rtwo\in(0,\infty)$, such that for all $r\geq \rtwo$ assertion $W(I,r,\theta,n,q)$ holds.
\end{bem} 
\begin{proof} 
With the density of the single site potential we get
\begin{align*}
s(\mu,\ep)
\leq 2\ep \| \dichte_\mu \|_\infty\leq 2\cdot \ep \cdot c_\dichte.
\end{align*}
By the Wegner estimate in theorem \ref{satz_wegner} we get for $\lambda \in I $ and $v\in V$
\begin{flalign*}
&\PP \left\{\omega\in \Omega\with  \dist\left(\sigma(H^{\Lambda_r(v)}(\omega)),\{\lambda\}\right) \leq r^{-\theta n}\right\}&\\
&\hspace{3cm}\leq \mathbb E \left\{ \Tr \left[ \ind_{\left(\lambda-r^{-\theta n},\lambda+r^{-\theta n}\right)}\left(H^{\Lambda_r(v)}(\omega)\right)  \right] \right\}&\\
&\hspace{3cm}\leq C_\mathrm{W}s(\mu,r^{-\theta n}) |\Lambda_r(v)|&\\
&\hspace{3cm}\hspace{-0.55cm}\underset{\eqref{geom:poly}}{\leq} 2 \cdot C_\mathrm{W}\cdot c_\dichte r^{-\theta n} \cdot\frac{ \cp \cdot r^d}{\ukl}   \hspace{2.5cm} \leq  r^{-q} &
\end{flalign*}
for $r\geq \rtwo(C_\mathrm{W},\cp,c_\dichte,\ukl) $.
\end{proof} 

With the initial length scale estimate and the Combes-Thomas estimate we can start the induction by
\begin{satz}[Induction start]
Let $(H^{P,L}(\omega))$ be a Laplacian on $\Gamma$ with \RBPLS, \potalles\ and \geomalles.
Let $\xi \in (0,2\tau-d)$. Then there is a $\beta\in (0,2)$ and a radius $\rthree\geq \rone$, such that for $r\geq \rthree$, $n\in \rz^+$ and $I=\left[\leos,\leos+ \tfrac{1}{2} r^{\beta-2} \right]$ assertion $G(I,r,n,\xi)$ holds.
\end{satz}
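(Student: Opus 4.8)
The plan is to run the classical \enquote{bottom of the spectrum} argument: at energies near the ground state $\leos$ the finite-volume operators $H^{\Lambda_r(v)}(\omega)$ almost surely have a spectral gap above $\leos$ of width of order $r^{\beta-2}$ (this is exactly the content of the initial length scale estimate), and the Combes--Thomas estimate turns such a gap into a stretched-exponential decay of the local resolvent that eventually beats the target polynomial rate $r^{-n}$. Concretely, I take $\beta$ to be precisely the exponent produced by Theorem \ref{satz:ALA} for the given $\xi\in(0,2\tau-d)$: that theorem yields $\beta$ with $0<\beta<2-\frac{d+\xi}{\tau}$, in particular $\beta\in(0,2)$, together with a radius $\rone$ such that $\PP(\mathcal B_v(r))\le r^{-\xi}$ for all $v\in V$ and $r\ge\rone$, where $\mathcal B_v(r):=\{\omega:\dist(\sigma(H^{\Lambda_r(v)}(\omega)),\{\leos\})\le r^{\beta-2}\}$. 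Since $H^{\Lambda_r(v)}(\omega)$ depends only on the coupling constants of the edges of $E(v,r)$, the events $\mathcal B_{v_1}(r)$ and $\mathcal B_{v_2}(r)$ are independent whenever $\Lambda_r(v_1)$ and $\Lambda_r(v_2)$ are disjoint.

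The core step is to show that off $\mathcal B_v(r)$ the ball $\Lambda_r(v)$ is $(n,\lambda,\omega)$-good for every $\lambda\in I=[\leos,\leos+\tfrac12 r^{\beta-2}]$, once $r$ is large. First I note $\leos\le\inf\sigma(H^{\Lambda_r(v)}(\omega))$: extending functions by zero embeds $\dom(\form^{\Gamma_{E(v,r)}}_\omega)$ into $\dom(\form_\omega)$ with equal form values, so the Dirichlet restriction only raises the bottom of the spectrum, and $\inf\sigma(H^{P,L}(\omega))\ge\leos$ by definition of $\leos$. Combined with $\omega\notin\mathcal B_v(r)$ this forces the (discrete, by Proposition \ref{hsatz:weyl_dir}) spectrum of $H^{\Lambda_r(v)}(\omega)$ into $(\leos+r^{\beta-2},\infty)$, so in particular $I\subset\res(H^{\Lambda_r(v)}(\omega))$. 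I then fix a constant $R_0>|\leos|+1$ and, for $r$ large enough that $r^{\beta-2}\le1$, use $(s,t):=(-R_0,\leos+r^{\beta-2})$ as a spectral gap of $H^{\Lambda_r(v)}(\omega)$: it contains $I$, both endpoints have modulus $\le R_0$, the width satisfies $t-s\ge\leos+R_0>0$, and for $\lambda\in I$ one has $\eta:=\dist(\{\lambda\},(s,t)^c)=\leos+r^{\beta-2}-\lambda\ge\tfrac12 r^{\beta-2}$. Feeding this into Corollary \ref{folg:CTA} with $r_1=r_2=r$ and both centres equal to $v$, and using $\dist(\Lout_r(v),\Lint_r(v))>\tfrac r2$ from \eqref{gl_dist_Lint_Lout} (valid for $r\ge24\okl$), yields, with constants independent of $\omega$ and $v$,
\[
\bigl\|\ind_{\Lout_r(v)}\bigl(H^{\Lambda_r(v)}(\omega)-\lambda\bigr)^{-1}\ind_{\Lint_r(v)}\bigr\|\le C_\mathrm{CTA}\,\eta^{-1}\,e^{-\tilde C\sqrt{(t-s)\eta}\,r/2}\le 2\,C_\mathrm{CTA}\,r^{2-\beta}\,e^{-c\,r^{\beta/2}}
\]
for a fixed $c>0$, since $\sqrt{(t-s)\eta}\ge\sqrt{\tfrac12(\leos+R_0)}\;r^{(\beta-2)/2}$ and $1+\tfrac{\beta-2}{2}=\tfrac\beta2$.

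Because $\beta/2>0$, the right-hand side is eventually dominated by $r^{-n}$, so there is a radius $\rthree\ge\max\{\rone,24\okl\}$ (depending on $n$ and on the constants above) such that for all $r\ge\rthree$ this bound is $\le r^{-n}$; hence off $\mathcal B_v(r)$ the ball $\Lambda_r(v)$ is $(n,\lambda,\omega)$-good for all $\lambda\in I$. To conclude $G(I,r,n,\xi)$, for disjoint $\Lambda_r(v_1)$, $\Lambda_r(v_2)$ and $r\ge\rthree$ I observe that $\mathcal B_{v_1}(r)^c\cup\mathcal B_{v_2}(r)^c$ is contained in the event $\{\omega:\forall\lambda\in I,\ \Lambda_r(v_1)\text{ or }\Lambda_r(v_2)\text{ is }(n,\lambda,\omega)\text{-good}\}$, and by independence
\[
\PP\bigl(\mathcal B_{v_1}(r)^c\cup\mathcal B_{v_2}(r)^c\bigr)=1-\PP(\mathcal B_{v_1}(r))\,\PP(\mathcal B_{v_2}(r))\ge1-r^{-\xi}\cdot r^{-\xi}=1-r^{-2\xi}.
\]
The main obstacle is the core step: one must resist using the \enquote{natural} gap $(\leos-r^{\beta-2},\leos+r^{\beta-2})$ around the ground state, whose shrinking width would give a Combes--Thomas exponent of order $r^{\beta-1}$ that is useless for $\beta<1$; anchoring the gap at a fixed lower endpoint keeps $t-s\asymp1$ so the exponent is of order $r^{\beta/2}\to\infty$, and one has to check that the resulting stretched-exponential bound, uniform in $\lambda\in I$ and in $\omega\notin\mathcal B_v(r)$, genuinely overtakes $r^{-n}$ past a scale $\rthree$ that is allowed to depend on $n$.
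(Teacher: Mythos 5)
Your proof is correct and follows essentially the same path as the paper: invoke Theorem \ref{satz:ALA} to get the gap event with probability $\ge 1 - r^{-\xi}$, then apply Combes--Thomas with a spectral gap whose lower endpoint is anchored at a fixed level rather than at $\leos - r^{\beta-2}$ (the paper uses $s = \leos - 1$ where you use $s = -R_0$ with $R_0 > |\leos|+1$, an inessential difference), so that $t-s$ stays bounded below and the exponent $\sqrt{\eta(t-s)}\,\delta \asymp r^{\beta/2}$ genuinely tends to infinity; independence for disjoint balls then gives the $r^{-2\xi}$ bound exactly as you write. Your remark about why the symmetric gap $(\leos - r^{\beta-2}, \leos + r^{\beta-2})$ would fail for $\beta < 1$ is a correct and worthwhile observation that the paper leaves implicit.
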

Here $\sigma_0$ again denotes the lower bound of all spectra of $(H^{P,L}(\omega))$.
\begin{proof} 
By theorem \ref{satz:ALA} we find a radius $\rone$ and $\beta\in (0,2)$, such that for all $r\geq \rone$ we have
\begin{equation*}
\PP\{ \omega\in \Omega\with\dist(\sigma(H^{\Lambda_r(v)}(\omega)),\{\leos\})\leq r^{\beta-2} \}\leq r^{-\xi}.
\end{equation*}
For all $\omega$ with $\dist(\sigma(H^{\Lambda_r(v)}(\omega)),\{\leos\}) > r^{\beta-2}$ we can use the Combes-Thomas estimate from Corollary \ref{folg:CTA} with the constants $\lambda\in I $, $s:=\leos-1$, $t:=\leos+r^{\beta-2}$ and $\eta \geq \tfrac{1}{2} r^{\beta-2} $, yielding
\begin{flalign*}
&\left\| \ind_{\Lint_r(v)} \big(H^{\Lambda_r(v)}(\omega)-\lambda\big)^{-1}\ind_{\Lout_r(v)} \right\| &\\
& \hspace{2cm} \leq C_\mathrm{CTA}\cdot\eta^{-1}\cdot \exp\left( -\tilde{C}\sqrt{\eta(s-r)}\dist(\Lint_r(v),\Lout_r(v)) \right)&\\
&\hspace{1.98cm} \underset{\eqref{gl_dist_Lint_Lout}}{\leq}
C_\mathrm{CTA}\cdot 2 r^{2-\beta} \cdot\exp\left( -\tilde{C}\cdot\sqrt{ \frac{r^{\beta-2}}{2}(r^{\beta-2}+1) } \cdot  \frac{r}{2}\right)&\\
&\hspace{2cm} \leq 2\,C_\mathrm{CTA}\cdot  r^{2-\beta}\cdot\exp\left( -\tilde{C} \cdot r^{\frac{\beta}{2}} \right)&\\
& \hspace{2cm} \leq r^{-n}
\end{flalign*}
for $r \geq \rthree(C_\mathrm{CTA},\beta,n,\rone)$. Thus all balls $\Lambda_r(v)$ for the above mentioned configurations $\omega$ are  $(n,\lambda,\omega)$-good.
Hence the probability that two independent balls (i.\,e. $E_{\Lambda_r(v_1)}\cap E_{\Lambda_r(v_2)}=\varnothing $) are $(n,\lambda,\omega)$-bad is less or equal to $\left(r^{-\xi}\right)^2$.
\end{proof}
Thereby we proved $G(I,r,n,\xi)$ for fixed $n$ and $\xi$ and all radii $r \geq \rthree$. But the interval depends on $r$ and its length goes to zero for $r \to \infty$.
Now we need to prove an induction step from $G(I,r,n,\xi)$ to a greater length scale, but keeping the interval $I$.
\begin{satz}[Induction] 
\label{satz:indsatz}
Let $(H^{P,L}(\omega))$ be a Laplacian on $\Gamma$ with \geomalles, \RBPLS\ and \potalles.
Let the induction parameters be given by
\begin{equation}
\label{IP}\tag{IP}
\left.
\begin{aligned}
&q\in \left(7d-6,7d\right), & \\
& \tau > \frac{3d}{2}-1,&\\
&\xi\in\left(2d-2,\min\left\{ 2\tau-d,\frac{q-3d+2}{2}  \right\}\right),&\\
&\alpha\in\left(  1,\min \left\{ \frac{2+2\xi}{2d+\xi},\frac{2+q}{3d+2\xi} \right\} \right) ,&\\
&\theta\in\left(\frac{q+d}{n},\frac{n+2-d-\alpha d}{\alpha n}\right),&\\
&n>9\alpha d+d-2. &  & 
\end{aligned}
\right\}
\end{equation}
Then there is a radius $\rnine$, such that if $G(I,r,n,\xi)$ holds with $r \geq \rnine$ for an open, bounded interval $I\subset \rz$  also $G(I,R,n,\xi)$ holds for $R=r^\alpha$.
\end{satz}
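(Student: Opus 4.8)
The plan is to run one step of a Stollmann-type multiscale induction, with the overlapping ball covers of Section~\ref{ab:WG} taking the role of the dyadic cubes of the $\gz^d$-setting; since these covers only provide a \emph{polynomial} bound on the number of balls involved, the output will be polynomial rather than exponential decay. Fix two disjoint balls $\Lambda_R(x_1)$, $\Lambda_R(x_2)$ with $R=r^\alpha$. For each $j$ cover $\Lambda_R(x_j)$ by the interiors $\Lint_r(v)$ of the $r$-balls centred at the vertices of the raster $V_{R,\frac r{10}}(x_j)$; by Lemma~\ref{hsatz_rasterzahl} there are at most $\cp R^d/r$ of these, and Lemma~\ref{hsatz_ueberdeckung}, Proposition~\ref{hsatz_containnerrand} and Remark~\ref{bem_ueberdeckung} supply the covering relations for $\Lint_R(x_j)$ and for the annuli $\Lout_r(v)$ that get fed into the resolvent iteration.

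The core is a deterministic lemma. Call $\Lambda_R(x)$ \emph{$\lambda$-nonresonant} if $\dist(\sigma(H^{\Lambda_R(x)}(\omega)),\lambda)>R^{-\theta n}$. I would prove: if $\Lambda_R(x)$ is $\lambda$-nonresonant and does not contain two disjoint $(n,\lambda,\omega)$-bad $r$-sub-balls with centres in the raster, then $\Lambda_R(x)$ is $(n,\lambda,\omega)$-good, provided $r$ is large. Indeed, any two $(n,\lambda,\omega)$-bad $r$-sub-balls must then intersect, so by the uniform edge-length bound \eqref{geom:uU} all bad sub-balls are trapped inside one \emph{container} ball $\Cont$ of radius $O(r)$ lying well inside $\Lambda_R(x)$ (Step~\ref{schritt1}; the $3\okl$-wide exteriors $\Lout$ are cut out precisely so that this matches the geometric resolvent inequality). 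Every $r$-sub-ball disjoint from $\Cont$ is $(n,\lambda,\omega)$-good, so one iterates Corollary~\ref{folg:gru} along chains of such good sub-balls, gaining a factor $C_\mathrm{GRU}\,r^{-n}$ at each of the $\sim R/r$ steps and summing over at most $\cp\,r^{d-1}$ sub-balls per annulus, while the transit from $\Lint_R(x)$ past $\Cont$ is absorbed by the Combes--Thomas estimate of Corollary~\ref{folg:CTA} (whose gap has width $\ge R^{-\theta n}$ by nonresonance) together with the a priori bound $\|(H^{\Lambda_R(x)}(\omega)-\lambda)^{-1}\|\le R^{\theta n}$. Collecting the factors bounds $\|\ind_{\Lout_R(x)}(H^{\Lambda_R(x)}(\omega)-\lambda)^{-1}\ind_{\Lint_R(x)}\|$ by a bounded power of $C_\mathrm{GRU}$ times $R^{\theta n}$ times $r^{-n}$ raised to the $\sim R/r$ iteration steps times a Combes--Thomas exponential, and the conditions $\theta<\tfrac{n+2-d-\alpha d}{\alpha n}$ and $n>9\alpha d+d-2$ in \eqref{IP} are exactly what is needed to push this below $R^{-n}$ once $r$ is large.

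It then remains to estimate probabilities. If both $\Lambda_R(x_1)$ and $\Lambda_R(x_2)$ are $(n,\lambda,\omega)$-bad at some common $\lambda\in I$, the deterministic lemma forces each $\Lambda_R(x_j)$ to be $\lambda$-resonant or to carry two disjoint bad $r$-sub-balls. The event that some $\Lambda_R(x_j)$ carries two disjoint bad $r$-sub-balls at some $\lambda\in I$ is bounded by applying the hypothesis $G(I,r,n,\xi)$ to each of the $\le(\cp R^d/r)^2$ pairs of disjoint raster balls — jointly bad with probability $\le r^{-2\xi}$ — and summing. The event that some $\Lambda_R(x_j)$ is $\lambda$-resonant for some $\lambda\in I$ is handled by conditioning on the (independent) partner box: by the Weyl bound of Corollary~\ref{folg:weyl} the conditioned box has at most $C_\mathrm{Weyl}R^d$ eigenvalues near $I$, so $\lambda$ is confined to $\le C_\mathrm{Weyl}R^d$ intervals of length $2R^{-\theta n}$, and Theorem~\ref{satz_wegner} (with $s(\mu,\ep)\le 2c_\dichte\ep$), together with the weak Wegner estimate of Remark~\ref{bem:W} applied at scale $R$ (valid for $r$ large since $\theta>\tfrac{q+d}{n}$), bound the probability that the other box has spectrum in one of these intervals or is itself resonant. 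Tracking the powers of $R$, the windows for $q$, $\xi$ and $\alpha$ in \eqref{IP} are calibrated so that the sum of all these contributions is $\le R^{-2\xi}$ once $r\ge\rnine$; that is $G(I,R,n,\xi)$.

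The step I expect to be the main obstacle is the deterministic lemma: a general metric graph carries no tiling, so the geometric resolvent inequality has to be iterated along the overlapping ball covers of Section~\ref{ab:WG}, and one must control simultaneously the container radius, the $\sim R/r$ iteration steps, the $\sim r^{d-1}$ branching per step and the loss $R^{\theta n}$ near the container, all the while keeping the final estimate below $R^{-n}$. It is the balancing of these against the probabilistic budget $r^{-2\xi}$ per pair supplied by $G(I,r,n,\xi)$ that pins down the otherwise opaque system \eqref{IP}; that the resulting parameter windows are non-empty is precisely what the appendix announced in the introduction checks.
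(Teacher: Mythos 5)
Your outline captures the right architecture (containers, geometric resolvent iteration, Combes--Thomas across the container, Wegner/Weyl for resonance, and a final budget check against $R^{-2\xi}$), but the quantitative calibration of the deterministic lemma does not survive the weaker covering numbers of a metric graph, and this is precisely where the paper has to depart from the $\gz^d$ template.

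Your deterministic lemma permits at most one bad $r$-sub-ball up to overlaps, i.e. rules out two \emph{disjoint} bad sub-balls. Then the probability that some $\Lambda_R(x_j)$ violates this is bounded by summing $r^{-2\xi}$ over pairs of raster centres, of which there are $\bigl(\cp R^d/r\bigr)^2$ by Lemma~\ref{hsatz_rasterzahl}. Demanding that this is $\lesssim R^{-2\xi}$ forces
\[
2\alpha d - 2 - 2\xi + 2\alpha\xi < 0
\qquad\Longleftrightarrow\qquad
\alpha < \frac{1+\xi}{d+\xi},
\]
and since $d\ge 1$ the right-hand side is $\le 1$, contradicting $\alpha>1$. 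The window is empty. (In $\gz^d$ the count of $r$-cubes inside an $R$-cube is $\sim (R/r)^d$, not $\sim R^d/r$, which is why the two-bad-cubes threshold works there.) The paper sidesteps this by allowing up to \emph{three} disjoint bad sub-balls and hence ruling out \emph{four}: four pairwise disjoint bad balls split into two independent pairs, each bad with probability $\le r^{-2\xi}$, so the budget becomes $\bigl(\cp R^d/r\bigr)^4 r^{-4\xi}$, and the resulting constraint $\alpha<\frac{2+2\xi}{2d+\xi}$ is nonempty exactly when $\xi>2d-2$, which \eqref{IP} arranges. This is also why Step~\ref{schritt1} must produce up to three container sets $\Cont_1,\Cont_2,\Cont_3$ (possibly merged into one or two larger ones), with the three candidate radii in $\mathcal R$ chosen so that the containers can always be taken disjoint; with only one container your merging step is trivial, but the probability trade-off is lost.

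There is a second, related gap. Your deterministic lemma assumes only that $\Lambda_R(x)$ is $\lambda$-nonresonant, and then invokes Combes--Thomas with gap $\ge R^{-\theta n}$ when traversing the container $\Cont$. But that Combes--Thomas estimate is applied to $H^{\Cont}(\omega)$, not to $H^{\Lambda_R(x)}(\omega)$, and nonresonance of the big ball does not give a spectral gap for the restriction to the container. In the paper this is what the event $\Omega_\mathrm{W}$ (Steps~\ref{schritt2}--\ref{schritt3}) is built for: it ranges over \emph{all} candidate balls — the raster balls with radii in $\mathcal R$ around both $x$ and $y$, \emph{and} the two big balls $\Lambda_R(x)$, $\Lambda_R(y)$ — and rules out any two of them having $\sigma_1$-spectra within $\min\{r_1,r_2\}^{-\theta n}$. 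On $\Omega_\mathrm{W}^c$ one then argues in Step~\ref{schritt4} that at most one of these candidate balls is $\lambda$-resonant, so whichever of $x$ or $y$ does not host it has \emph{all} its containers (and $\Lambda_R$ itself) $\lambda$-dissonant, and only then can the iteration proceed. Your resonance accounting — conditioning the big ball on its partner and invoking Wegner — handles $\Lambda_R(x_j)$ but leaves the containers' dissonance unjustified, so the exponential smallness from Combes--Thomas in your relation \eqref{gl_it_-1} is not available.

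Both defects stem from the same source: the ball cover on a general metric graph delivers $O(R^d/r)$ centres rather than $O((R/r)^d)$, and the containers are a priori only $O(1)$ balls of radius $O(r)$ in unknown places, so the induction parameters in \eqref{IP} are tighter and the resonance bookkeeping must cover the containers explicitly. Once you allow three disjoint bad balls, track resonance over all raster balls and both $\Lambda_R$'s (the $\Omega_\mathrm{W}$ event), and then run your iteration with the prefactors $\delta_\pm$ and the Combes--Thomas closing step, the argument closes exactly as in the paper.
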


The induction start can be used to start the induction with the interval $I=[\leos,\leos+\frac{1}{2}{\rnine}^{\beta -2}]$.
But the induction step works also for any other interval satisfying an induction start.
In the proof we will need a slightly larger Interval $I_0:=I+\left(-\frac{1}{2},\frac{1}{2}\right)$ for the use of Wegner estimate and Weyl asymptotic.

Some choices of the induction parameters will be made clear in the proof. Most of them guarantee, that others can be chosen, i.\,e. the intervals for other parameters are nonempty.
To demonstrate this and show the dependencies we gathered all parameters in the theorem.
For further details see the appendix \ref{app}.\medskip

The proof will be divided in four steps.
The first three will provide the event in $\Omega$ with probability of at least $1-R^{-2\xi}$
for the assertion $G(I,R,n,\xi)$.
The last step will show how to find a $(n,\lambda,\omega)$-good ball $\Lambda_R(v)$.

The remainder of this section is devoted to a proof of the theorem:
\medskip

First we will define a (good) event in $\Omega$, such that for a vertex $v\in V$, there are maximal three disjoint $(n,\lambda,\omega)$-bad balls, with center in the vertex-raster $\raster(v)\subset V$ and radius $r$, in $\Lambda_R(v)$.
\begin{align*}
\Omega_{\mathrm{G}}(v)&:=\{ \omega \in \Omega \with \forall\  \lambda\in I \ \nexists \  4 \text{ disjoint balls }\Lambda_r(b_i)\subset\Lambda_R(v)\\
&\hspace{0.95cm} \text{ with } b_i\in\raster(v),i=1,\ldots,4, \text{ which are all }(n,\lambda,\omega)\text{-bad}\}.
\end{align*}

\begin{schritt}
\label{schritt1}
With the assumptions of the induction theorem we conclude
\begin{enumerate}
 \item There exists a radius $\rfour=\rfour(d,\alpha)$, such that for $r\geq \rfour$ we have:
 	\begin{equation*}
 	 \PP (\Omega_{\mathrm{G}}(v))\geq 1-\tfrac{1}{3}R^{-2\xi} \qquad \text{for all }v\in V.
 	\end{equation*}
  \item For $\omega \in \Omega_{\mathrm{G}}(v)$, $\lambda\in I$, $x\in V$, $r$ big enough, there exist three induced subgraphs $\Cont_i$ $(i=1,2,3)$, with $\Cont_i:=\Lambda_{r_i}(v_i)$ or $\Cont_i=\Gamma_\varnothing$, which will be called container sets, with the following properties
 	\begin{enumerate}
  		\item $r_i\in \mathcal R=\left\{ 3r+2\okl,\tfrac{63}{10}r+11\okl,\tfrac{48}{5}r+\tfrac{41}{2}\okl\right\}$, $\sum\limits_{i=1}^3 r_i\leq \tfrac{48}{5}r+\tfrac{41}{2}\okl$, $v_i\in \raster(x)$.
 		\item If $\Lambda_r(b)\subset \Lambda_R(x)$ with $b\in \raster(x)$ is $(n,\lambda,\omega)$-bad, then
 			\begin{equation*}
 		 		\Lambda_r(b)\subset \bigcup_{i=1}^3 \Cont_i.
 			\end{equation*}
 		\item Each two containers are disjoint.
 	\end{enumerate}
\end{enumerate}
\end{schritt}

\begin{bem} The statements of step 1 yield the following
\begin{itemize}
 \item
For each tuple of an energy $\lambda \in I$ and an $\omega \in \Omega_{\mathrm{G}}$ we find $(n,\lambda,\omega)$-bad balls with radius $r$ and centers in the raster, such that there are maximally three disjoint ones. With 2.b) they all lie in one of the containers.
\item 
In the following we have to estimate the number of all possible containers. To fix the centers on the raster and the number of possible radii reduces the number largely. The different radii are necessary to satisfy 2.c) and their maximal sum will be important in step \ref{schritt4} of the iteration.
\item 
Covering the boundary of the container as in proposition \ref{hsatz_containnerrand} with balls, which don't lie completely in the container, we obtain with 2.c) that these balls are $(n,\lambda,\omega)$-good.
\end{itemize}
\end{bem}

\begin{proof}[Proof of step 1]
\begin{enumerate}
\item
Let $\omega \not\in \Omega_\mathrm{G}$. Then there exists a $\lambda\in I$, such that there are four disjoint $(n,\lambda,\omega)$-bad balls with radius $r$ and centers in $\raster(x)$. 

By lemma \ref{hsatz_rasterzahl} we know that there are maximal $\cp \frac{10\,R^d}{r} $ centers in
$\raster(x)$ and thus at most $\left(\frac{10\, \cp}{r}\right)^4 R^{4d}$ quadruple of disjoint balls with radius $r$ and center in $\raster(x)$ in $\Lambda_R(x)$. By $G(I,r,n,\xi)$ we know the probability of one pair of $(n,\lambda,\omega)$-bad balls is bounded by $r^{-2\xi}$. Hence we get

\begin{equation*}
\PP(\Omega_{\mathrm{G}}^C)
\leq \left(\frac{10\,\cp}{r}\right)^4 R^{4d} r^{-4\xi}.
\end{equation*}
Multiplied with $ R^{2\xi}=r^{2\alpha \xi}$ we get as exponent of $r$:
\begin{equation*}
-4+4\alpha d-4\xi+2\alpha\xi<0
\Leftrightarrow \alpha <\frac{2+2\xi}{2d+\xi},
\end{equation*}
which is satisfied due to \eqref{IP}.
Thus we find $\PP(\Omega_{\mathrm{G}}^C)\leq \frac{1}{3}R^{-2\xi}$ for $r\geq \rfour(\cp,d,\alpha,\xi)$, where the parameters $\alpha$ and $\xi$ depend only on $d$ and $\tau$.

\item 
Picking $\omega \in \Omega_\mathrm{G}$ there are maximally three disjoint $(n,\lambda,\omega)$-bad balls with centers in the vertex-raster $\raster(x)$.
Let $\Lambda_r(b_i)$, $b_i \in \raster(x)$, $i=1,2,3$ be three of such balls (if there are less, the proof gets easier).

Then all $(n,\lambda,\omega)$-bad balls with center in the raster $\raster(x)$, which are not disjoint to $\Lambda_r(b_i)$, are subsets of
$\Lambda_{3r+2\okl}(b_i)$:
For all those $(n,\lambda,\omega)$-bad balls $\Lambda_r(b)$ there is an edge $e_b$ with $e_b\in E(b,r)$ and $e_b\in E(b_i,r)$.
The distance of any point $y\in\Lambda_r(b)$ to $b_i$ is less or equal to $\metrik(y,b)+\metrik(b,b_i)$, where $\metrik(y,b)<r+\okl$ and $\metrik(b,b_i)\leq r+r+\okl$, as $e_b$ begins with maximal distance $r$ from $b_i$ and $b$ and has length smaller or equal to $\okl$.

If two of the sets $\Lambda_{3r+2\okl}(b_i)$ are not disjoint, we have to join them to one bigger neighborhood.
Without loss of generality let $\Lambda_{3r+2\okl}(b_1)$ and $\Lambda_{3r+2\okl}(b_2)$ be not disjoint. Then there is an edge
$e_b$, contained in both graphs and a point $\tilde{x}$ in $I_{e_b}$.
With lemma \ref{hsatz_ueberdeckung} we can find a point $\tilde{v}\in \raster(x)$, such that
$\tilde{x}\in \Lambda_{3\cdot \tfrac{r}{10}+5\okl}(\tilde{v})$. 
For all points $y$ lying in the container $\Lambda_{3r+2\okl}(b_i)$, ($i=1,2$) we have
\begin{align*}
 \metrik(y,\tilde{v})&\leq \metrik(y,b_i)+\metrik(b_i,\tilde{v})\\
&< (3r+2\okl+\okl)+\left(3r+2\okl+\frac{3r}{10}+5\okl+\okl\right)\\
&\leq \frac{63}{10}r+11\okl.
\end{align*}
Thus the new, bigger container is $\Lambda_{\frac{63}{10}r+11\okl}(\tilde{v})$.

Now it might happen, that $\Lambda_{\frac{63}{10}r+11\okl}(\tilde{v})$ and $\Lambda_{3r+2\okl}(b_3)$ are not disjoint.
With the arguments shown above we can find a vertex $\overline{v}$ in the raster $\raster(x)$, such that both containers are contained in 
$\Lambda_{\frac{48r}{5}+\frac{41\okl}{2}}(\overline{v})$. Here the radius is the minimal radius with the claimed properties.
(See \cite{diss_11} for more details.)

Appropriately rearranging we find one of the following configurations for containers for the three bad balls, which satisfy all stated properties:
\begin{enumerate}[label=(\roman*)]
\item $\Cont_i=\Lambda_{3r+2\okl}(b_i)$, $i=1,2,3$
\item $\Cont_1=\Lambda_{\frac{63}{10}r+11\okl}(\tilde{v})$, $\Cont_2=\Lambda_{3r+2\okl}(b_3)$, $\Cont_3=\Gamma_\varnothing$
\item $\Cont_1=\Lambda_{\frac{48r}{5}+\frac{41\okl}{2}}(\overline{v}) $, $\Cont_2=\Cont_3=\Gamma_\varnothing$.
\end{enumerate}
\end{enumerate}\vspace{0.1cm}
This ends the proof of step 1.
\end{proof}
The concrete setting of containers depends on the parameters $\omega \in \Omega$ and $\lambda \in I$. Thus for each tuple $(\omega,\lambda)$ there are three sets $\Cont_i(\omega,\lambda)$.

\begin{defn} 
Let $\Lambda_s(z)$ be a ball with $s>0$. We define
\begin{equation*}
\sigma_1\left(H^{\Lambda_s(z)}(\omega)\right):=\sigma\left(H^{\Lambda_s(z)}(\omega)\right)\cap (I+(-\tfrac{1}{2}s^{-\theta n},\tfrac{1}{2}s^{-\theta n})),
\end{equation*}
where $A+B$ stands for the Minkowsi sum.
\end{defn} 
From the definition $I_0:=I+\left(-\frac{1}{2},\frac{1}{2}\right)$ we automatically get for $s\geq 1$
\begin{equation*}
\left( I+(-\tfrac{1}{2}s^{-\theta n},\tfrac{1}{2}s^{-\theta n}) \right)\subset I_0.
\end{equation*}
\begin{schritt} 
\label{schritt2}
Let $\Lambda_{r_1}(v_1)$ and $ \Lambda_{r_2}(v_2)$ with $r_1$, $r_2\geq \rtwo$ be two disjoint and thus independent balls. We have
\begin{align*}
&\PP \left\{\omega\in \Omega \with \dist\left(\sigma_1\left(H^{\Lambda_{r_1}(v_1)}(\omega)\right), \sigma_1\left(H^{\Lambda_{r_2}(v_2)}(\omega)\right) \right) \leq  {\min\{r_1,r_2\}}^{-\theta n} \right\}\\
&\hspace{5cm}\leq C_\mathrm{Weyl}\cdot\frac{{\max\{r_1,r_2 \}}^d }{{\min\{r_1,r_2\}}^q}.
\end{align*}
\end{schritt} 

\begin{proof} 
Without loss of generality let $r_1=\min\{r_1,r_2\}$.
Let $E_1$, $E_2\subset E$ be finite, disjoint subsets and $\Omega_0\subset \Omega$ an event with
\begin{equation*}
\prod\limits_{e\in E_1\cup E_2} q_e(\Omega_0)\times {[q_-,q_+]}^{E\setminus (E_1\cup E_2)}=\Omega_0,
\end{equation*}
thus being a cylinder set, depending only on the edges in $E_1$ and $E_2$. We have
\begin{align}
 \label{gl:schritt2_1}
\PP(\Omega_0)&=
\mathbb E_{E\setminus E_1}\left(\PP_{E_1}\left(\Omega_0\right) \right)
=\mathbb E_{E_2}\left( \PP_{E_1}(\Omega_0) \right).
\end{align}
Further we get
\begin{align*}
& \PP_{E(v_1,r_1)}  \left\{\omega\in\Omega : \dist\left(\sigma_1\left(H^{\Lambda_{r_1}(v_1)}(\omega)\right),\sigma_1\left(H^{\Lambda_{r_2}(v_2)}(\omega)\right)\right)\leq {r_1}^{-\theta n}\right\}\\
&\hspace{1cm}=\PP_{E(v_1,r_1)}  \Bigg\{\omega\in\Omega : \min\limits_{\lambda\in\sigma_1\left(H^{\Lambda_{r_2}(v_2)}(\omega)\right)} \dist\left(\sigma_1\left(H^{\Lambda_{r_1}(v_1)}(\omega)\right),\{\lambda\}\right)\leq {r_1}^{-\theta n}\Bigg\}\\
&\hspace{1cm}=\PP_{E(v_1,r_1)}\text{\large{$\Bigg\{$}} \bigcup\limits_{\lambda\in \sigma_1\left(H^{\Lambda_{r_2}(v_2)}(\omega)\right)}  \left\{ \omega\in\Omega : \dist\left(\sigma_1\left(H^{\Lambda_{r_1}(v_1)}(\omega)\right),\{\lambda\}\right)  \leq {r_1}^{-\theta n} \right\} \text{\large{$\Bigg\}$}} \\
&\hspace{1cm}\leq \sum\limits_{\lambda\in \sigma_1\left(H^{\Lambda_{r_2}(v_2)}(\omega)\right)}  \PP_{E(v_1,r_1)} 
\left\{   \omega\in\Omega : \dist\left(\sigma_1(H^{\Lambda_{r_1}(v_1)}(\omega)\right),\{\lambda\})  \leq {r_1}^{-\theta n}\right\}.
\intertext{
Replacing $\sigma_1\left( H^{\Lambda_{r_1}(v_1)}\right)$ with $\sigma\left(H^{\Lambda_{r_1}(v_1)}\right)$ the probabilities only can get larger. They can be estimated using the Wegner estimate $W(I,r,\theta,n,q)$ from remark \ref{bem:W} by ${r_1}^{-q}$.
The Weyl asymptotic in theorem \ref{folg:weyl} yields a bound on the number of summands, being the number of eigenvalues  of $H^{\Lambda_{r_2}(v_2)}(\omega)$ in $I_0$. Altogether we obtain:
}
&\hspace{0.45cm}\ldots\leq \left|\sigma_1\left(H^{\Lambda_{r_2}(v_2)}(\omega)\right)\right|\cdot  {r_1}^{-q}
\leq C_\mathrm{Weyl}\cdot \frac{ {r_2}^d}{{r_1}^{q}}.
\end{align*}
In total this gives
\begin{align*}
&\PP  \bigg\{
\underbrace{
\omega\in\Omega : \dist\left(\sigma_1\left(H^{\Lambda_{r_1}(v_1)}(\omega)\right),\sigma_1\left(H^{\Lambda_{r_2}(v_2)}(\omega)\right)\right)\leq {r_1}^{-\theta n}}_{=:\Omega_0}
\bigg\}\\
&\hspace{1cm}\underset{\eqref{gl:schritt2_1}}{=}\mathbb E_{E(v_2,r_2)}\left( 
\PP_{E(v_1,r_1)}(\Omega_0)
\right)
\leq \mathbb E_{E(v_2,r_2)} \left( C_\mathrm{Weyl}\cdot\frac{ {r_2}^d}{{r_1}^{q}} \right) =C_\mathrm{Weyl}\cdot\frac{ {r_2}^d}{ {r_1}^{q}}.
\end{align*}
This ends the proof of step 2.
\end{proof}

\begin{schritt}
\label{schritt3}
There exists a radius $\rfive$, such that for $r\geq \rfive$ we have: For $x$, $y\in V$ with $\Lambda_R(x)$ and $\Lambda_R(y)$
are disjoint
\begin{align*}
\PP(\Omega_\mathrm{W})&\leq \frac{1}{3} R^{-2\xi}
\end{align*}
holds. Here
\begin{align*}
\Omega_\mathrm{W}&=\left\{ \omega\in \Omega \with \esgibt \Lambda_{r_i}(v_i), i=1,2 \text{ with } \right.\\
&\phantom{=\ \ \ }\Lambda_{r_1}(v_1)=\Lambda_R(x) \text{ or } v_1\in \raster(x) \text{ and }r_1\in \mathcal R,\\
&\phantom{=\ \ \ }\Lambda_{r_2}(v_2)=\Lambda_R(y) \text{ or } v_2\in \raster(y) \text{ and }r_2\in \mathcal R,\\
&\phantom{=\ \ \ }\left.
\dist\left(\sigma_1\left(H^{\Lambda_{r_1}(v_1)}(\omega)\right),\sigma_1\left(H^{\Lambda_{r_2}(v_2)}(\omega)\right)\right) \leq \min\{r_1,r_2\}^{-\theta n} \right\}.
\end{align*}
\end{schritt}

\begin{proof}
We will estimate the number of all possible pairs of balls $\Lambda_{r_i}(v_i)$.
The number of centers is bounded by the number of vertices in the rasters $\raster(x)$ and $\raster(y)$. Both are bounded by $10\,\cp R^d/r$ 
by lemma \ref{hsatz_rasterzahl}.
As there are maximal four different radii, we get with step \ref{schritt2} for all $r\geq \rtwo$
\begin{align*}
\PP(\Omega_\mathrm{W}) &\leq \left( 4\cdot 10 \cp \frac{R^d}{r}  \right)^2\cdot C_\mathrm{Weyl} \cdot \frac{(\max\{r_1,r_2\})^d }{(\min\{r_1,r_2\})^q }\qquad (r<r_1,r_2\leq R)\\
&\leq C_\mathrm{Weyl} \left( 40 \cp  \right)^2 \cdot\frac{R^{3d}}{r^{2+q}}
=C_\mathrm{Weyl} \left( 40\cp \right)^2 r^{3d\alpha-2-q}.
\end{align*}
Multiplied with $R^{2\xi}=r^{2\alpha\xi}$ we get $3d\alpha -2-q+2\alpha\xi $ as exponent of $r$. This is smaller than zero, if 
\begin{equation*}
\alpha < \frac{2+q}{3d+2\xi}
\end{equation*}
which is satisfied by \eqref{IP}.
Hence there is a radius $\rfive(C_\mathrm{Weyl},\cp,d,q,\alpha,r_2)$, such that the assertion is true for all $r\geq \rfive$.
This ends the proof of step 3.
\end{proof}

\begin{schritt}
\label{schritt4}
Let $x$, $y\in V$, such that $\Lambda_R(x)$ and $\Lambda_R(y)$ are disjoint, $\Omega_\mathrm{G}$ and $\Omega_\mathrm{W}$ as in step \ref{schritt1} and \ref{schritt3}.
Let $\omega\in \Omega_\mathrm{G}(x)\cap \Omega_\mathrm{G}(y)\cap \Omega_\mathrm{W}^c$ and $\lambda\in I$. 
Then there exists a vertx $z\in\{x,y\}$ and a radius $\reight$, such that $\Lambda_R(z) $ is $(n,\lambda,\omega)$-good for $r\geq \reight$.
\end{schritt}

\begin{defn}
Let $\lambda \in I$. We call an induced subgraph $\Lambda_r(v)$ $\lambda$-resonant to $\omega\in \Omega$, if
\begin{equation*}
\dist\left(\sigma(H^{\Lambda_r(v)}(\omega)),\{\lambda\}\right)\leq \tfrac{1}{2}r^{-\theta n},
\end{equation*}
otherwise we call it $\lambda$-dissonant to $\omega$.
\end{defn}

\begin{proof}
Let $r_1$ be the maximal radius of all balls $\Lambda_s(v)$ with $s\in \mathcal R$ and $v\in\raster(x)\cup\raster(y)$ or $s=R$ and $v\in \{x,y\}$ being $\lambda$-resonant to $\omega\in \Omega_\mathrm{W}^c$.
(If there is no such ball, then they all are $\lambda$-dissonant.) 
Without loss of generality let $\Lambda_{r_1}(v_1)$ be such a ball with $v_1\in \raster(y)$ or $v_1=y$. 
Then all balls $\Lambda_{r_2}(v_2)$ with $v_2\in \raster(x) $ and $r_2\in \mathcal R $ and also $\Lambda_R(x)$ are $\lambda$-dissonant:

We assume the contrary: Without loss of generality let $\Lambda_{r_2}(v_2)$ be $\lambda$-resonant.
Then 
\begin{flalign*}
&\dist\left(\sigma_1\left(H^{\Lambda_{r_1}(v_1)}(\omega)\right), \sigma_1\left(H^{\Lambda_{r_2}(v_2)}(\omega)\right)\right) &\\
&\hspace{1cm}\leq \dist\left(\sigma_1\left(H^{\Lambda_{r_1}(v_1)}(\omega)\right),\{\lambda\}\right)+\dist\left(\sigma_1\left(H^{\Lambda_{r_2}(v_2)}(\omega)\right),\{\lambda\}\right)&\\
&\hspace{1cm}\leq \frac{1}{2}r_1^{-\theta n}+\frac{1}{2}r_2^{-\theta n}  \qquad \left(\text{as } \Lambda_{r_1}(v_1),\Lambda_{r_2}(v_2) \text{ are } \lambda \ \text{-resonant}\right)&\\
&\hspace{1cm}\leq \min\{r_1,r_2 \}^{-\theta n},
\end{flalign*}
yielding $\omega\in \Omega_\mathrm{W}$, which is a contradiction.
Hence all container in $\Lambda_R(x)$ as described in step \ref{schritt1} and $\Lambda_R(x) $ itself are $\lambda$-dissonant.

Now we will show that $\Lambda_R(x)$ is $(n,\lambda,\omega)$-good.
This will be done using a covering of $\Lint_R(x)$ with balls $\Lint_r(v)$ and an iteration of the covering which is the essential part of the multiscale analysis.

Let $x_0$ be a center needed to cover $\Lint_R(x)$, i.\,e. $x_0\in \raster(x)\cap \Lambda_{\frac{R+r}{3}}(x)$. If we have chosen the centers $x_0,\ldots,x_m$ $\in\raster(x)$ with $m\geq 0$ we will do a case study depending on $\omega$ how to choose the next center $x_{m+1}$. See also the illustration in figure \ref{abb:msa}.
\begin{itemize} 
 \item[(+):] {\it $\Lambda_r(x_m)$ is $(n,\lambda,\omega)$-good}:\\
Then by the geometric resolvent inequality in corollary \ref{folg:gru} we have
\begin{flalign}
\label{gl_+_1}
\notag &\left\| \ind_{\Lout_R(x)}\left(H^{\Lambda_R(x)}(\omega)-\lambda \right)^{-1} \ind_{\Lint_r(x_m)} \right\|&\\
\notag  & \hspace{1cm}\leq C_\mathrm{GRU}\left\| \ind_{\Lout_R(x)}\left(H^{\Lambda_R(x)}(\omega)-\lambda \right)^{-1} \ind_{\Lout_r(x_m)} \right\|&\\
\notag & \hspace{2.5cm} \cdot \left\| \ind_{\Lout_r(x_m)}\left(H^{\Lambda_r(x_m)}(\omega)-\lambda \right)^{-1} \ind_{\Lint_r(x_m)}\right\|&\\
&\hspace{1cm}\leq C_\mathrm{GRU} \left\| \ind_{\Lout_R(x)}\left(H^{\Lambda_R(x)}(\omega)-\lambda \right)^{-1} \ind_{\Lout_r(x_m)} \right\| \cdot r^{-n}.&
\end{flalign}
Now we cover $\Lout_r(x_m)$ by balls with centers in $\raster(x)$ and radius $\tfrac{r}{3}$, meaning with the interiors of balls with radius $r$. By lemma \ref{hsatz_ueberdeckung} and remark \ref{bem_ueberdeckung} this is done in the following way:
\begin{equation*}
\Lout_r(x_m) \subset \bigcup\limits_{v \in \raster(x)\cap V_{E\left(x_m,\frac{4}{3}r\right)}} \Lint_r(v).
\end{equation*}
Lemma \ref{hsatz_rasterzahl} yields a bound on the number of sets/centers needed for the covering
\big(the centers all lie in $E\left(x_m,\frac{4r}{3}\right)$; thus the associated disjoint sets with radius $\frac{r}{10}$ all lie in $E\left(x_m,\frac{4r}{3}+\frac{r}{10}+\okl\right)$\big):
\begin{align*}
\left|\raster(x)\cap V_{E\left(x_m,\frac{4r}{3}\right)}\right| &\leq \left|V_{\frac{4r}{3}+\frac{r}{10}+\okl,\frac{r}{10}}(x_m)\right|
\leq \cp \frac{10\left(\tfrac{43}{30}r+\okl\right)^d}{r}\\ &\hspace{-0.17cm}\underset{r>\rgeom}{<} \cp\cdot \left(\tfrac{3}{2}\right)^d\cdot r^{d-1}.
\end{align*}
With relation \eqref{gl_+_1} we conclude
\begin{flalign}
\label{gl_MSA_+}
\notag &\left\| \ind_{\Lout_R(x)}\left(H^{\Lambda_R(x)}(\omega)-\lambda \right)^{-1} \ind_{\Lint_r(x_m)} \right\|&\\
\notag &\hspace{1cm}\leq C_\mathrm{GRU} \, r^{-n}\cdot \sum\limits_{v\in\raster(x)\cap V_{E\left(x_m,\frac{4}{3}r\right)}}
\left\| \ind_{\Lout_R(x)}\left(H^{\Lambda_R(x)}(\omega)\nm{-1pt}\lambda \right)^{-1} \ind_{\Lint_r(v)} \right\|&\\
&\hspace{1cm}\leq\underbrace{ C_\mathrm{GRU} \cdot r^{-n-1+d} \cdot \cp \cdot \left(\tfrac{3}{2}\right)^d}_{=:\delta_+} \cdot
\left\| \ind_{\Lout_R(x)}\left(H^{\Lambda_R(x)}(\omega)-\lambda \right)^{-1} \ind_{\Lint_r(x_{m+1})} \right\|,&
\end{flalign}
where $x_{m+1}$ is a vertex from $\raster(x)\cap V_{E\left(x_m,\frac{4}{3}r\right)}$, for which the above norm is maximal.
The distance $\metrik(x_m,x_{m+1})$ is less or equal to $\tfrac{4r}{3}$.
\item[($-$):] {\it $\Lambda_r(x_m)$ is $(n,\lambda,\omega)$-bad}:

Then by step \ref{schritt2} we find a container $\Cont_i=\Lambda_{r_i}(v_i)$ ($i\in \{1,2,3\}$) with $r_i\in \mathcal R$ and $v_i\in \raster(x)$, such that $\Lambda_r(x_m)\subset \Cont_i$.
By \eqref{gl_dist_Lint_Lout} we have
\begin{equation*}
\dist\left(\Cont_i^\mathrm{out},\Lint_r(x_m)\right)\geq \dist\left(\Lout_r(x_m),\Lint_r(x_m)\right)\geq \frac{r}{2}.
\end{equation*}
As all containers in $\Lambda_R(x)$ are $\lambda$-dissonant we know that $(s,t)$ is a gap in the spectrum of $H^{\Cont_i} $, where 
$s:=\lambda-\tfrac{1}{2}{r_i}^{-\theta n}$ and $t:=\lambda+\tfrac{1}{2}{r_i}^{-\theta n}$.
With $\eta=\frac{1}{2}{r_i}^{-\theta n}$ we conclude from the Combes-Thomas estiamte (theorem \ref{satz_CTA}):
\begin{flalign}
\label{gl_it_-1}
\notag\left\| \ind_{\Cont_i^{\mathrm{out}}} \left(H^{\Cont_i}(\omega)-\lambda \right)^{-1}\ind_{\Lint_r(x_m)} \right\|
&\leq C_\mathrm{CTA}\cdot 2\cdot {r_i}^{\theta n}\cdot
\exp\left(-\frac{\tilde{C}}{\sqrt{2}}\cdot {r_i}^{-\theta n}\cdot \frac{r}{2} \right)&\\
& \leq  2 C_\mathrm{CTA} \cdot{r_i}^{\theta n}.
\end{flalign}

If $\Cont_i^\mathrm{out}$ and $\Lout_R(x)$ are disjoint, we get from the corollary of the geometric resolvent inequality \ref{folg:gru}
\begin{flalign}
\label{gl_MSA_-1}
\notag &\left\|\ind_{\Lout_R(x)} \left(H^{\Lambda_R(x)}(\omega)-\lambda\right)^{-1} \ind_{\Lint_r(x_m)}  \right\|
&\\
\notag &\hspace{0.75cm}
\leq C_\mathrm{GRU} \left\| \ind_{\Lout_R(x)} \left(H^{\Lambda_R(x)}(\omega)-\lambda\right)^{-1}\ind_{\Cont_i^\mathrm{out}}  \right\|
\cdot \left\| \ind_{\Cont_i^\mathrm{out}} \left(H^{\Cont_i}(\omega)-\lambda\right)^{-1}\ind_{\Lint_r(x_m)}  \right\|&\\
&\hspace{0.715cm}\underset{\eqref{gl_it_-1}}{\leq} 2\,C_\mathrm{GRU} \, C_\mathrm{CTA} \, {r_i}^{\theta n}\cdot
\left\| \ind_{\Lout_R(x)} \left(H^{\Lambda_R(x)}(\omega)-\lambda\right)^{-1}\ind_{\Cont_i^\mathrm{out}}  \right\|.&
\end{flalign}
Now, using proposition \ref{hsatz_containnerrand}, we can cover $\Cont_i^\mathrm{out}$ with balls of radius $\tfrac{r}{3}$ and centers in $\raster(x)$, such that these balls don't lie completely in any container, i.\,e. they are $(n,\lambda,\omega)$-good
\begin{equation*}
\exists\  W \subset \raster(x) \text{ with } \bigcup\limits_{w \in W} \Lint_r(w) \supset \Cont_i^\mathrm{out}  \text{ and } \Lint_r(w)\not\subset\Cont_i \text{ for }w\in W.
\end{equation*}

All the disjoint sets with radius $\frac{r}{10}$ of the raster are contained in a ball with radius $r_i+\frac{r}{3}+\frac{r}{10}+\okl$ and with the center of the container.
By lemma \ref{hsatz_rasterzahl} we can bound the number of elements in $W$ needed to cover the container
\begin{equation*}
| W|\leq \cp \frac{10(r_i+\tfrac{r}{3}+\frac{r}{10}+\okl)^d}{r},
\end{equation*}
yielding
\begin{flalign}
\label{gl_MSA_-2}
&
\begin{aligned}
&\left\| \ind_{\Lout_R(x)} \left(H^{\Lambda_R(x)}(\omega)-\lambda\right)^{-1}\ind_{\Cont_i^\mathrm{out}}  \right\|&\\
&\hspace{1cm}\leq 
\cp \frac{10(r_i+\tfrac{r}{3}+\frac{r}{10}+\okl)^d}{r} \left\| \ind_{\Lout_R(x)}\left(H^{\Lambda_R(x)}(\omega)-\lambda\right)^{-1} \ind_{\Lint_r(\widehat{x})}\right\|,&
\end{aligned} &
\end{flalign}
where $\widehat{x}$ is an element of $W$ maximizing the last norm.
As $\Lint_r(\widehat{x})$ is by construction $(n,\lambda,\omega)$-good we can proceed by a step (+).
Overall, with the relations\eqref{gl_MSA_+}, \eqref{gl_MSA_-1}  and \eqref{gl_MSA_-2}, we have
\begin{flalign}
\label{gl:($-$)}
&\begin{aligned}
&\left\|\ind_{\Lout_R(x)} \left(H^{\Lambda_R(x)}(\omega)-\lambda\right)^{-1} \ind_{\Lint_r(x_m)}  \right\|&\\
&\hspace{1cm}\leq  \underbrace{20\left(\tfrac{3}{2}\right)^dC_\mathrm{CTA} \left(C_\mathrm{GRU}\cdot \cp\right)^2 (r_i+\tfrac{13}{30}r+\okl)^d \cdot r^{-n-2+d}\cdot{r_i}^{\theta n}}_{=:\delta_-}&\\
&\hspace{2.5cm} \cdot \left\| \ind_{\Lout_R(x)}\left(H^{\Lambda_R(x)}(\omega)-\lambda \right)^{-1} \ind_{\Lint_r(x_{m+1})} \right\|.&
\end{aligned}&
\end{flalign}
For the centers we know
\begin{align*}
\metrik(x_m,v_i) &\leq r_i+\okl-r \\
\metrik(\widehat{x},v_i)&\leq \tfrac{r}{3}+r_i+\okl
\intertext{with step (+) resulting in}
\metrik(x_m,x_{m+1}) &\leq 2r_i +\tfrac{2}{3}r+2\okl.
\end{align*}
This ends the case study.
\end{itemize}

The prefactors $\delta_+$ and $\delta_-$ can be estimated in the following way:
\begin{equation*}
\delta_+ = C_\mathrm{GRU}\, \cp \left(\tfrac{3}{2}\right)^d\cdot r^{-n-1+d}
\end{equation*}
gets arbitrarily small for $r\geq \rsix$, where $\rsix(C_{\mathrm{GRU}},\cp,d,n)$.
With $r_i<R$, respectively $r_i+\frac{13}{30}r+\okl < R$ \big( which is satisfied for $r\geq 11^{\frac{1}{\alpha-1}}$\big) we get:
\begin{align*}
\delta_- &\leq 20(\tfrac{3}{2})^d \,C_\mathrm{CTA} \left(C_\mathrm{GRU}\,\cp\right)^2 \cdot
r^{-n-2+d+\alpha\theta n+\alpha d}
\end{align*}
To guaranty polynomial decay of the prefactor, the exponent needs to be smaller than zero.
In terms of $\theta$ this means $\theta< \tfrac{n+2-d-\alpha d}{\alpha n}$, which is satisfied by \eqref{IP}.
Thus there exists a radius $\rseven$, depending on $C_\mathrm{GRU}$, $C_\mathrm{CTA}$, $\cp$, $d$, $\alpha$ and $n$, such that
\begin{equation*}
\delta_- \leq \frac{1}{2} \qquad \text{for all } r\geq \rseven.
\end{equation*}

\begin{figure}[htb]
\centering
\begin{tikzpicture}[scale=0.8]
\draw[thick,pattern=north west lines] (190:6) arc(190:270:6);   
\draw[thick,color=gray,fill=white] (190:5.85) arc(190:270:5.85);   
\node[anchor=west] at (270:5.9) {$\Lout_R(x)$};
\draw[thick] (190:1.7) arc(190:270:1.7); \node[anchor=west] at (270:1.7) {$\Lint_R(x)$};
\draw[thick,color=green] (230:1.75) circle (0.2cm);
\draw[thick,color=red] (234:1.9) circle (0.2cm); 
\draw[thick,color=blue] (238:2.25) circle (0.6cm); 
\draw[thick,color=green] (235:2.95) circle (0.2cm);\draw[thick,color=green] (237:3.2) circle (0.2cm);
\draw[thick,color=green] (236:3.5) circle (0.2cm);\draw[thick,color=green] (233:3.6) circle (0.2cm);
\draw[thick,color=red] (231:3.85) circle (0.2cm); 
\draw[thick,color=blue] (231:4.15) circle (0.6cm); 
\draw[thick,color=green] (238:4.5) circle (0.2cm);\draw[thick,color=green] (239:4.75) circle (0.2cm);
\draw[thick,fill=black] (239:5.1) circle (0.02cm); 
\draw[thick,fill=black] (239.5:5.25) circle (0.02cm); 
\draw[thick,fill=black] (240:5.4) circle (0.02cm); 
\draw[thick,color=green] (1.7,-2.5) circle (0.2cm);
\node[anchor=west] at (2.1,-2.5) {\textcolor{green}{$\Lambda_r(x_m)\text{ is }(n,\lambda,\omega)\text{-good }$}};
\draw[thick,color=red] (1.7,-3.3) circle (0.2cm);
\node[anchor=west] at (2.1,-3.3) {\textcolor{red}{$\Lambda_r(x_m) \text{ is }(n,\lambda,\omega)\text{-bad }$}};
\draw[thick,color=blue] (1.7,-4.1) circle (0.4cm); 
\node[anchor=west] at (2.1,-4.1) {\textcolor{blue}{container $\Cont_i$}};
\end{tikzpicture}
\caption{Illustration of the iteration in the multiscale analysis.}
\label{abb:msa}
\end{figure}
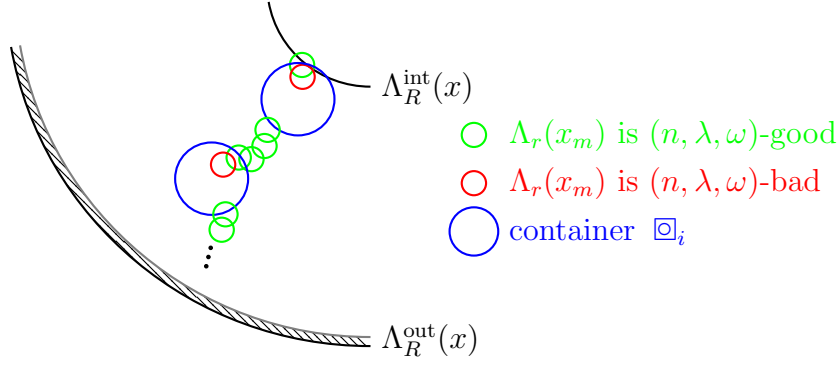

As the prefactors are smaller than one---at least for big radii---the estimate gets better with each step of iteration.
Thus we want to have as many iterations as possible. We have to obey the following restrictions: For case (+) we need $\Lambda_r(x_m)$ and $\Lout_R(x)$ to be disjoint, to be able to apply the geometric resolvent inequality.
For case ($-$) $\Cont_i^\mathrm{out}$ and $\Lout_R(x)$ have to be disjoint and a step (+) has to be added after choosing $\widehat{x}$.

We will denote the center, at which we have to stop the iteration by $x_k$. To be able to perform a Combes-Thomas estimate with 
$\Lint_r(x_k)$ and $\Lout_R(x)$, this two sets have to have a positive distance.

Now we want to estimate the minimal number of steps corresponding to case (+), denoted by $k_+$. 
The worst case is if we start as far as possible from $x$ and go with each step straight forward to the boundary $\Lout_R(x)$ using the maximally possible step width and thereby run through all containers.
See the illustration in figure \ref{abb:msa}.
The number of steps by ($-$) will be denoted by $k_-$. Hence we get
\begin{align*}
&k_+\cdot  \underbrace{\left( \frac{4}{3}r\right)}_{\text{step width }  (+)} +\sum\limits_{i=1}^3\underbrace{\left(2r_i + \frac{2}{3}r + 2\okl\right)}_{\text{step width case ($-$)}}\geq
\underbrace{R - 3\okl}_{\Lout_R(x)}-\underbrace{\left( \frac{R}{3}  + \frac{r}{3} + \okl \right)}_{\Lint_r(x)\cap\Lint_R(x_0)\neq \varnothing} - \underbrace{\left(\frac{r}{3} + 2\okl\right)}_{x_k \text{-relation}}.
\end{align*} 
With part 2.a) of step \ref{schritt2} we conclude
\begin{align}
\label{gl:msa:k+}
k_+ &\geq \frac{1}{2}\frac{R}{r}-\frac{82}{5}-\frac{159}{4} \frac{\okl}{r} \geq \frac{1}{2}\frac{R}{r}-17
\end{align}
for $r\geq 300 \okl=\rgeom$.
In total we get by the relations \eqref{gl_MSA_+} for case (+) and \eqref{gl:($-$)} in case ($-$), using $\delta_- <1$ and $k=k_++k_-$
\begin{flalign*}
&\hspace{0.5cm}\left\| \ind_{\Lout_R(x)}\left(H^{\Lambda_R(x)}(\omega) -\lambda\right)^{-1} \ind_{\Lint_r(x_0)} \right\|&\\
&\hspace{2cm}\leq\delta_+^{k_+} \cdot \underbrace{{\delta_-}^{k_-}}_{< 1}\cdot \left\| \ind_{\Lout_R(x)}\left(H^{\Lambda_R(x)}(\omega) -\lambda\right)^{-1} \ind_{\Lint_r(x_k)} \right\|.
\end{flalign*}
We can estimate the last norm analogue as in \eqref{gl_it_-1} with the Combes-Thomas estimate ($s=\lambda-\frac{1}{2}R^{-\theta n}$, $t=\lambda+\frac{1}{2}R^{-\theta n}$) as $\Lambda_R(x)$ is $\lambda$-dissonant
\begin{flalign}
\label{gl:it_schritt}
&\hspace{0.5cm}\left\| \ind_{\Lout_R(x)}\left(H^{\Lambda_R(x)}(\omega) -\lambda\right)^{-1} \ind_{\Lint_r(x_0)} \right\|\leq \delta_+^{k_+}\cdot
2 \,C_\mathrm{CTA}\cdot R^{ \theta n}.&
\end{flalign}
With this relation we are able to prove the induction step. First we cover $\Lint_R(x)$ with balls of the form $\Lint_r(y)$, where the distance of the used centers to $x$ is bounded by $\frac{R+r}{3}$. Hence we can use relation \eqref{gl:it_schritt} with $x_0=y$.
Estimating the number of centers we have
\begin{flalign*}
&\hspace{0.5cm}\left\| \ind_{\Lout_R(x)} \left(H^{\Lambda_R(x)}(\omega)-\lambda\right)^{-1} \ind_{\Lint_R(x)}\right\|&\\
&\hspace{2cm}\leq
\cp \frac{10\left(\frac{R+r}{3}+\frac{r}{10}+\okl\right)^d}{r}   \cdot {\delta_+}^{k_+}\cdot 2\,C_\mathrm{CTA}\cdot R^{\theta n}&\\
&\hspace{1.49cm} \underset{\delta_+< 1,\eqref{gl:msa:k+}}{\leq} 2\, C_\mathrm{CTA} \cdot \cp \cdot
\left(\delta_+\right)^{\frac{R}{2r}-17}\cdot
\frac{10\left(\frac{R}{3}+\frac{13}{30}r+\okl\right)^d}{r}    \cdot R^{\theta n}&\\
&\hspace{2cm}\leq \tilde{C} \cdot\left( \delta_+ \right)^{\frac{R}{2r}-17 } \cdot r^{\alpha d-1+\alpha \theta n}.&
\end{flalign*}
By \eqref{IP} we get
\begin{flalign*}
\hspace{2cm}&\leq \tilde{C} \cdot (\delta_+)^{\frac{R}{2r} -17} \cdot r^{n}.&
\end{flalign*}
For $r\geq \rsix$ we have $\delta_+ \leq \frac{1}{2}$. Combined  with $r\geq (4\cdot 17)^\frac{1}{\alpha-1}$ 
we conclude for the exponential function $\frac{r^{\alpha-1}}{2}-17\leq \frac{r^{\alpha-1}}{4}$, yielding
\begin{flalign*}
\hspace{2cm}&\leq \tilde{C} \cdot \left(\frac{1}{2}\right)^{\frac{r^{\alpha-1}}{4}} \cdot r^{n}\leq \tilde{C}\cdot \left(\frac{\sqrt[4]{8}}{2} \right)^{r^{\alpha-1}} \cdot r^{n}. &
\end{flalign*}
Hence there exists a radius $\reight$ with $\reight \geq \max\{\rsix,\rseven \}$, such that for all radii $r\geq \reight(C_\mathrm{CTA},\cp,\rsix,\rseven)$ the exponential part is decaying faster as the polynomial $\tilde{C}\cdot r^{n+\alpha n}\leq \tilde{C}\cdot r^{4n} $ is growing:
\begin{flalign*}
&\hspace{0.5cm}\left\| \ind_{\Lout_R(x)}\left(H^{\Lambda_R(x)}(\omega) -\lambda\right)^{-1} \ind_{\Lint_r(x)} \right\| \leq r^{-\alpha n}=R^{-n}. &
\end{flalign*}This ends step 4.
\end{proof}
Due to step \ref{schritt4} we know: If $G(I,r,n,\xi)$ holds for all $\omega \in \Omega_\mathrm{G}(x)\cap\Omega_\mathrm{G}(y)\cap\Omega_\mathrm{W}^c$ and $r\geq \reight$ also $G(I,R,n,\xi)$ holds .
For the used events we know from part 1 of step \ref{schritt1} and step \ref{schritt3}
\begin{align*}
\PP(\Omega_\mathrm{G})\geq 1-\frac{1}{3}R^{-2\xi}, \qquad \PP(\Omega_\mathrm{W})\leq \frac{1}{3}R^{-2\xi},
\end{align*}
 for all $r\geq \max\{\rfour,\rfive\} $. This yields
\begin{align*}
\PP(\Omega_\mathrm{G}(x)\cap\Omega_\mathrm{G}(y)\cap\Omega_\mathrm{W}^c) &=1-\PP(\Omega_\mathrm{G}(x)^c\cup\Omega_\mathrm{G}^c(y)\cup\Omega_\mathrm{W})\\
&\geq 1-\left(\PP(\Omega_\mathrm{G}(x)^c) +\PP(\Omega_\mathrm{G}(y)^c)+\PP(\Omega_\mathrm{W})  \right)\\
&=1-R^{-2\xi},
\end{align*}
and for all radii $r\geq \rnine:=\max\{\rfour,\rfive,\reight\}$ the assertion stated in the theorem. This finishes the proof of theorem \ref{satz:indsatz}.

\begin{bem}
\label{bem:pol:decay}
We are not able to prove exponential decay of the local resolvent. The reason is the covering of an arbitrary metric graph, which is not as precise as in the $\gz^d$ case. It results in the estimate
$k_+ \geq \frac{1}{2}\cdot \frac{R}{r}-c$, where the prefactor $\tfrac{1}{2}$ plays an essential role. If we want to prove
\begin{align*}
\left\| \ind_{\Lout_r(v)} \left(H^{\Lambda_r(v)} (\omega)-\lambda  \right)^{-1} \ind_{\Lint_r(v)}  \right\| &\leq e^{-\gamma\cdot r} \qquad \text{and}\\
\left\| \ind_{\Lout_R(v)} \left(H^{\Lambda_R(v)}( \omega)-\lambda  \right)^{-1} \ind_{\Lint_R(v)}  \right\| &\leq e^{-\gamma_R\cdot R}
\end{align*}
we would end in $\gamma_R \sim \tfrac{1}{2}\gamma$, leaving us with no positive prefactor for the exponential decay for $r \to \infty$.
\end{bem}

\section{Spectral localization} 
\label{ab:SL}
From section \ref{ab:VEF} we know the existence of generalized eigenfunctions with a maximal polynomial growth for operators $(H^{P,L}(\omega))$. Together with the induction theorem and the polynomial decay we can prove polynomial decay of the generalized eigenfunctions. If the decay is fast enough we can conclude the eigenfunctions lying in $L^2(X_E)$ and thus being real eigenfunctions. Hence we get almost surely discrete spectrum in the interval $I$ (from the previous section).

Therefore we need the following proposition to estimate the norms of the generalized eigenfunctions.
\begin{hsatz}
\label{hsatz:AL1}
Let $\Gamma$ be a metric graph with \geomalles\ and $(H^{P,L}(\omega))$ a random operator with \RBPLS\ and \potalles.
Let a ball $\Lambda_r(v)$ with $v\in V$ and $r\geq 6\okl$ be given.
Then for each bounded interval $I_0 \in \rz$ there exists a constant $C_\mathrm{VEF}(I_0,\ukl,S,C_\pot)$, not depending on $r$ and $v$, such that for any generalized eigenfunction $f$ for $\lambda\in I_0$ and any $\omega \in \Omega$ with $\lambda \in \res(H^{\Lambda_r(v)}(\omega))$ we have
\begin{equation*}
\left\|\ind_{\Lint_r(v) }f \right\| \leq C_\mathrm{VEF}\left\| \ind_{\Lout_r(v)} \left(H^{\Lambda_r(v)}(\omega)-\lambda\right)^{-1}\ind_{\Lint_r(v) } \right\|
\left\| \ind_{\Lout_r(v)} f \right\|.
\end{equation*}
\end{hsatz}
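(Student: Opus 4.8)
The plan is to cut $f$ off to the ball $\Lambda_r(v)$ and invert the restricted operator. First I would choose a cut-off $\varphi\in C^\infty(X_E)$ which is constant on a neighbourhood of every vertex of $\Gamma$ (so that, as in Proposition~\ref{satz_RGL} and Lemma~\ref{hsatz_ru2}, multiplication by $\varphi$ preserves the \RBPLS\ vertex conditions), with $0\le\varphi\le 1$, $\varphi\equiv 1$ on $\Lint_r(v)=\Lambda_{r/3}(v)$, $\varphi\equiv 0$ on a neighbourhood of the boundary vertices $\rk{E(v,r)}$, and $\supp\varphi'\subset X_{E(v,r)\setminus E(v,r-3\okl)}$, with $\|\varphi'\|_\infty$ and $\|\varphi''\|_\infty$ bounded by a constant depending only on $\ukl$ and $\okl$. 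Such a $\varphi$ exists for $r\ge 6\okl$: then $\Lint_r(v)$ lies strictly inside $\Lambda_{r-3\okl}(v)$, the transition region has radial room of order $\okl$, and edges have length between $\ukl$ and $\okl$, so one may take $\varphi$ essentially a smooth function of $\metrik(\cdot,v)$ flattened near each vertex. Since $f$ is a generalized eigenfunction, $f\in W^{2,2}\lok(X_E)$ and $f$ satisfies the vertex boundary conditions of $H^{P,L}(\omega)$ by Proposition~\ref{hsatz:VEF_RB}; because $\varphi$ is constant near each inner vertex of $\Lambda_r(v)$ and vanishes near each boundary vertex, $\varphi f$ then satisfies the $(P_w,L_w)$-conditions on $\ik{E(v,r)}$ and the Dirichlet condition on $\rk{E(v,r)}$, hence $\varphi f\in\dom\big(H^{\Lambda_r(v)}(\omega)\big)$.

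Next I would differentiate. As $f$ is a generalized eigenfunction, $f''=\pot_\omega f-\lambda f$ as a weak derivative on every edge (Proposition~\ref{hsatz:VEF_RB}), so the product rule gives
\[
\big(H^{\Lambda_r(v)}(\omega)-\lambda\big)(\varphi f)=-\varphi''f-2\varphi'f'=:g,
\]
with $\supp g\subset\supp\varphi'\subset X_{\Lout_r(v)}$ and $g\in L^2(X_{E(v,r)})$, since $g$ is supported on the finitely many edges of $\Lout_r(v)$. Inverting and restricting to $\Lint_r(v)$, where $\varphi\equiv 1$,
\[
\ind_{\Lint_r(v)}f=\ind_{\Lint_r(v)}\big(H^{\Lambda_r(v)}(\omega)-\lambda\big)^{-1}\ind_{\Lout_r(v)}\,g.
\]
Since $H^{\Lambda_r(v)}(\omega)$ is self-adjoint and $\lambda$ real, the resolvent is self-adjoint, so $\big\|\ind_{\Lint_r(v)}\big(H^{\Lambda_r(v)}(\omega)-\lambda\big)^{-1}\ind_{\Lout_r(v)}\big\|=\big\|\ind_{\Lout_r(v)}\big(H^{\Lambda_r(v)}(\omega)-\lambda\big)^{-1}\ind_{\Lint_r(v)}\big\|$, and therefore
\[
\big\|\ind_{\Lint_r(v)}f\big\|\le\big\|\ind_{\Lout_r(v)}\big(H^{\Lambda_r(v)}(\omega)-\lambda\big)^{-1}\ind_{\Lint_r(v)}\big\|\cdot\|g\|.
\]

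It remains to bound $\|g\|$ by $\|\ind_{\Lout_r(v)}f\|$. From the support and the uniform bounds on $\varphi',\varphi''$ one has $\|g\|\le\|\varphi''\|_\infty\,\|\ind_{\Lout_r(v)}f\|+2\|\varphi'\|_\infty\,\|\ind_{\Lout_r(v)}f'\|$, and summing the edgewise cone estimate of Proposition~\ref{hsatz:VEF:kegel} over the edges of $\Lout_r(v)$ gives $\|\ind_{\Lout_r(v)}f'\|^2\le C_\mathrm{cone}\,\|\ind_{\Lout_r(v)}f\|^2$ with $C_\mathrm{cone}$ depending only on $\sup_{\lambda\in I_0}|\lambda|$, $\ukl$ and $C_\pot$. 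Hence $\|g\|\le C_\mathrm{VEF}\,\|\ind_{\Lout_r(v)}f\|$ with $C_\mathrm{VEF}=C_\mathrm{VEF}(I_0,\ukl,S,C_\pot)$ independent of $r$ and $v$ (the dependence on $\okl$ being absorbed), which combined with the last display is the assertion. I expect the main obstacle to be the construction in the first paragraph: producing a single $\varphi$ that is constant near every vertex, equal to $1$ on $\Lint_r(v)$, zero near $\rk{E(v,r)}$ with $\supp\varphi'\subset X_{\Lout_r(v)}$, and has derivatives bounded uniformly in $r$ and $v$, together with the verification that $\varphi f$ then lies in $\dom\big(H^{\Lambda_r(v)}(\omega)\big)$ — i.e.\ the metric-graph–specific bookkeeping around the vertices; the remaining steps are routine resolvent identities and the two cited propositions.
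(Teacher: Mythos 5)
Your proof is correct, and it takes a genuinely different route from the paper's. You compute the commutator residual $\big(H^{\Lambda_r(v)}(\omega)-\lambda\big)(\varphi f)=-\varphi''f-2\varphi'f'$ directly from the eigenvalue equation $f''=\pot_\omega f-\lambda f$ and then invert the resolvent, so the whole argument stays at the operator level. The paper instead works at the level of the sesquilinear form: it takes $g:=\big(H^{\Lambda_r(v)}(\omega)-\lambda\big)^{-1}\ind_{\Lint_r(v)}f$, derives the commutator identity $\|\ind_{\Lint_r(v)}f\|^2=\langle f\phi'|g'\rangle-\langle f'|\phi'g\rangle$ from $(\form_\omega-\lambda)[\phi f,g]=\|\ind_{\Lint_r(v)}f\|^2$ and $(\forms-\lambda)[f,\phi g]=0$, and then needs the Caccioppoli inequality (Lemma~\ref{hsatz_ru2}) to control the term involving $g'$ by the resolvent norm. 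Your route bypasses Caccioppoli entirely; the price is that the cut-off must be $C^2$ and $\|\varphi''\|_\infty$ enters the constant, whereas the paper gets by with a $C^1$ cut-off and $\|\phi'\|_\infty$. Both proofs use Proposition~\ref{hsatz:VEF:kegel} to control $\|\ind_{\Lout_r(v)}f'\|$. A bonus of your argument is that, since you avoid Caccioppoli, the constant does not in fact pick up a dependence on the vertex-operator bound $S$ (the paper's $C_\mathrm{VEF}$ does, through $C_\mathrm{CP}$). One caution on the cut-off: to match the claimed dependence of $C_\mathrm{VEF}$ on $\ukl$ but not $\okl$, you should make the transition of $\varphi$ near each vertex occur over a fixed distance of order $\ukl$ (as the paper does with $\psi$ on $[0,\ukl/2]$), rather than spread over the full annulus of width $3\okl$, so that $\|\varphi'\|_\infty,\|\varphi''\|_\infty$ depend only on $\ukl$.
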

We choose $r\geq 6\okl$ to obtain that $\Lint_r(v)$ and $\Lout_r(v)$ are disjoint induced subgraphs.
This relation is called eigenfunction decay inequality and 
the proof follows the steps of the proof of lemma 3.2.2 (b) in \cite{Stollmann-01}.
\begin{proof} 
Let $\psi:\left[0,\frac{\ukl}{2}\right]\to [0,1]$ be a smooth function with $\psi(x)=0$ in an open neighborhood of zero and $\psi(x)=1$ in an open neighborhood of $\frac{\ukl}{2}$. With the help of $\psi$ we can construct a cut-off function $\phi$ with support in $X_{E(v,r-\okl)}$.

On the boundary edges, i.\,e. edges having one vertex in $\rk{E(v,r-\okl)}$, we set $\phi$ equal to $\psi$ starting from the boundary vertex and continue on the rest of $X_{E(v,r-\okl)}$ by setting $\phi$ identically one. Outside of $X_{E(v,r-\okl)}$ we choose zero. Then we have 
$\tr{v}(\phi)\equiv 1$ on $\ik{E(v,r-\okl)}$, $\tr{v}(\phi)\equiv 0$ in all other vertices, $\supp \phi \subset X_{E(v,r-\okl)}$, $\supp \phi'\subset \Lout_r(v)$ and the boundary vertices of $\Gamma_{E(v,r-\okl)}$
are inner vertices of $\Lout_r(v)$.

Let $f$, $g\in \dom\lok(\form_\omega)$. Then with the notation of  section \ref{ab:ITG} we have
\begin{align*}
\forms[\phi f,g]-\forms[f,\phi g]&=\langle(\phi f)'|g'\rangle-\langle f'|(\phi g)'\rangle+\sum\limits_{v \in V}\skp{L_v \tr{v}(\phi f)}{\tr{v}(g)}{}\\
&\hspace{0.5cm}-\sum\limits_{v \in V}\skp{L_v \tr{v}(f)}{\tr{v}(\phi g)}{}+\langle \pot_\omega(\phi f)|g \rangle -  \langle \pot_\omega f|\phi g\rangle.
\end{align*}
Since $\phi$ has compact support both sums are finite. Moreover they are equal as of the construction of $\phi$ and the linearity of $L_v$.
As $\phi$ is real, the same holds for the scalar products with the potentials, yielding with the product rule 
\begin{align}
\label{gl_AL_Lemma2_1}
\forms[\phi f,g]-\forms[f,\phi g]&=\langle(\phi f)'|g'\rangle-\langle f'|(\phi g)'\rangle 
=\langle \phi'f|g'\rangle-\langle f'|\phi'g\rangle.
\end{align}
Let $f$ be a generalized eigenfunction for $\lambda \in I_0$ (in the sense of definition \ref{def:VEF}) and $g:=(H^{\Lambda_r(v)}(\omega)-\lambda)^{-1}\ind_{\Lint_r(v)}f $.
By proposition \ref{hsatz:VEF_RB} part 3 we know, that $f$ satisfies the boundary conditions of the operator, thus $f\in \dom\lok(\form_\omega)$ and $\phi f \in \dom(H^{\Lambda_l(v)}(\omega))$
\begin{align}
\label{gl_AL_Lemma2_2}
\notag (\form_\omega-\lambda)[\phi f,g]&=(\form^{\Lambda_r(v)}_\omega-\lambda)[\phi f,g]\\
\notag&=\skp{(H^{\Lambda_r(v)}(\omega)-\lambda)\phi f}{(H^{\Lambda_r(v)}(\omega)-\lambda)^{-1}\ind_{\Lint_r(v)} f}{}\\
 &=\skp{\phi f}{\ind_{\Lint_r(v)} f}{} \\
&=\left\|\ind_{\Lint_r(v)} f\right\|^2
\end{align}
as $\phi\big|_{\ind_{\Lint_r(v)}}\equiv 1 $.
From $\forms[f,\phi g]$ we get by partial integration
\begin{align}
\notag\overline{(\forms-\lambda)[f,\phi g]}&=(\forms-\lambda)[\phi g,f] \qquad (\tr{v}(\phi)\equiv 0 \text{ on } V\setminus \ik{E(v,r-\okl)} )\\
\notag&=\langle(\phi g)'|f'\rangle+\sum\limits_{v\in \ik{E(v,r-\okl)}} \skp{L_v \tr{v}(\phi g)}{\tr{v}(f)}{}+\langle(\pot_\omega)(\phi g)|f\rangle\\
\notag&=\langle -(\phi g)''| f\rangle+\langle(\pot_\omega)(\phi g)|f\rangle\\
\notag&\hspace{0.5cm}\underbrace{+\sum\limits_{v \in \ik{E(v,r-\okl)}}  \langle \str{v}((\phi g)'),\tr{v}(f)  \notag\rangle+\sum\limits_{v\in \ik{E(v,r-\okl)}}  \skp{L_v \tr{v}(\phi g)}{\tr{v}(f)}{}}_{=0,\text{ by boundary conditions}}\\
\notag&=\langle\left(H^{P,L}(\omega)-\lambda\right)\phi g|f\rangle\\
\label{gl:AL:par:in}&=0,
\end{align}
as $(\phi g)\in \dom(H^{P,L}(\omega)) $ and $f$ is a generalized eigenfunction of $H^{P,L}(\omega)$ for $\lambda$.
With \eqref{gl_AL_Lemma2_1} and\eqref{gl_AL_Lemma2_2} we have
\begin{align*}
\left\|\ind_{\Lint_r(v)} f\right\|^2 &=\skp{f\phi'}{g'}{} -\langle f'|\phi' g\rangle\\
& \leq\left\|\psi'\right\|_\infty \left\| \ind_{\Lout_r(v)} f \right\| \left\| \ind_{\supp \phi'}\left(\left(H^{\Lambda_r(v)}(\omega)-\lambda\right)^{-1}\ind_{\Lint_r(v)}f\right)'  \right\| \\
&\hspace{1cm}+ \left\|\psi'\right\|_\infty \left\| \ind_{\supp \phi'}f' \right\| \left\|\ind_{\Lout_r(v)}\left(H^{\Lambda_r(v)}(\omega)-\lambda\right)^{-1}\ind_{\Lint_r(v)} f \right\|.
\end{align*}
As the edges satisfy the cone condition we can estimate $f'$ using proposition \ref{hsatz:VEF:kegel}.
Together with $\supp \phi'\subset \Lout_r(v)$ 
\begin{equation*}
\left\| \ind_{\supp \phi'} f'\right\|\leq \left\| \ind_{\Lout_r(v)} f' \right\| \leq C_\mathrm{cone} \left\| \ind_{\Lout_r(v)} f \right\|
\end{equation*}
holds. We will denote the annulus of width $\okl$ inside of $\Lout_r(v)$ with $\Lambda_r^\delta(v):=\Lambda_{r-\okl}(v)\setminus \Lambda_{r-2\okl}(v) $. By construction we have $\ind_{\Lambda_l^{\delta}(v)}\geq \ind_{\supp \phi'}$.
Using partial integration---in analogy to relation \eqref{gl:AL:par:in}---we
see, that $g$ is a weak solution of $H^{P,L}(\omega)g=\lambda g+\ind_{\Lint_r(v)}f$ on $\Lambda^\delta_r(v)$ (i.\,e. $g$ is a weak solution of $(H^{P,L}(\omega)-\lambda)g=0 $ on $\Lambda_r^{\delta}(v)$). Using the Caccioppoli inequality from \ref{hsatz_ru2} we see
\begin{align*}
\left\| \ind_{\supp \phi'}\left(\left(H^{\Lambda_r(v)}(\omega)-\lambda\right)^{-1}\ind_{\Lint_r(v)}f\right)'  \right\| 
&\leq
\left\| \ind_{\Lambda_r^\delta(v)}\left(\left(H^{\Lambda_r(v)}(\omega)\right)^{-1}\ind_{\Lint_r(v)}f\right)'\right\|\\
& \hspace{-2cm}=C_\mathrm{CP} (1+|\lambda|)\left\| \ind_{\Lout_r(v)} \left( H^{\Lambda_r(v)}(\omega)-\lambda\right)^{-1} \ind_{\Lint_r(v)}f  \right\|.
\end{align*}
Overall we get
\begin{align*}
\left\|\ind_{\Lint_r(v)} f\right\|^2 &\leq
\|\psi'\|_\infty \big((1+|\lambda|)C_\mathrm{CP}+C_\mathrm{cone} \big) \\
& \hspace{1cm}\cdot \left\|\ind_{\Lout_r(v)}\left(H^{\Lambda_r(v)}(\omega)-\lambda\right)^{-1}\ind_{\Lint_r(v)}  \right\| \left\|\ind_{\Lint_r(v)} f \right\| \left\| \ind_{\Lout_r(v)} f \right\|. \qedhere
\end{align*}
\end{proof} 
Combining the last proposition, the induction theorem and the knowledge of generalized eigenfunctions from section \ref{ab:VEF} we can prove the polynomial decay of the generalized eigenfunctions.
\begin{hsatz} 
\label{hsatz:SL:poly}
Let $\Gamma$ be a metric graph and $(H^{P,L}(\omega))$ a random operator with \geomalles, \RBPLS\ and \potalles.
Let the induction parameters of the induction theorem \ref{satz:indsatz} be chosen according to \eqref{IP} and
 $I=[\leos,\leos+\frac{1}{2}{\rzero}^{\beta-2}]$
with $\rzero\geq \max\{\rthree,\rnine\}$.

Then there exists a $n_0\in \rz^+$ and a set $\Omega_0\subset \Omega$ with $\PP(\Omega_0)=1$, such that for all generalized eigenfunctions $f_{\lambda}$ of $H^{P,L}(\omega)$ for $\lambda \in I$ and $\omega\in \Omega_0$ we have:
There  exists a constant $C_{\mathrm{poly}\hspace{-1pt}\searrow}=C_{\mathrm{poly}\hspace{-1pt}\searrow}(C_\mathrm{VEF},C_{\mathrm{poly}\hspace{-1pt}\nearrow},d)$ and a radius $\rten$---depending on the eigenfunction---with
\begin{equation*}
\|\ind_{\Lambda_{20\okl}(x)} f_\lambda \|\leq C_{\mathrm{poly}\hspace{-1pt}\searrow} \metrik(x,0)^{-n_0}
\end{equation*}
for all $x\in X_E$ with $\metrik (x,0)\geq \rten$.
\end{hsatz}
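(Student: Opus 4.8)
The plan is to combine the polynomial growth of generalized eigenfunctions (Proposition \ref{hsatz:VEF:poly}), the eigenfunction decay inequality (Proposition \ref{hsatz:AL1}) and the output of the multiscale induction (Theorem \ref{satz:indsatz}) along the superexponentially growing sequence of scales $r_0:=\rzero$, $r_{k+1}:=r_k^{\alpha}$, for which $G(I,r_k,n,\xi)$ holds for all $k$ by the induction start together with Theorem \ref{satz:indsatz}. The exponent $n_0$ will come out of a \emph{single} application of Proposition \ref{hsatz:AL1} on a good ball whose radius is a fixed power of $\metrik(x,0)$, while the full-measure set $\Omega_0$ will be produced by a Borel--Cantelli argument.

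First one carries out the probabilistic reduction. For each $k$ fix a polynomially growing ``observation radius'' $\rho_k:=r_k^{\kappa}$ with $\kappa\geq\alpha$ chosen in accordance with \eqref{IP} (so that also $2d\kappa<2\xi$), and let $\mathcal E_k\subset\Omega$ be the event that there exist two \emph{disjoint} balls $\Lambda_{r_k}(v_1),\Lambda_{r_k}(v_2)$ with $v_1,v_2$ in the combinatorial ball of radius $\rho_k$ around the fixed root $0$ which are simultaneously $(n,\lambda,\omega)$-bad for some $\lambda\in I$. Counting the centres with Lemma \ref{hsatz_rasterzahl} (at most $\cp\rho_k^{d}/\ukl$ of them) and applying $G(I,r_k,n,\xi)$ to each of the $\leq(\cp\rho_k^d/\ukl)^2$ pairs one gets $\PP(\mathcal E_k)\leq(\cp/\ukl)^2\,r_k^{\,2d\kappa-2\xi}$; since $r_k\to\infty$ superexponentially and the exponent is negative, $\sum_k\PP(\mathcal E_k)<\infty$. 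By Borel--Cantelli the set $\Omega_0:=\{\omega:\omega\in\mathcal E_k\text{ for only finitely many }k\}$ has full measure, and for $\omega\in\Omega_0$ there is $k_0(\omega)$ such that for every $k\geq k_0$ and every $\lambda\in I$ the $(n,\lambda,\omega)$-bad balls of radius $r_k$ with centre in the $\rho_k$-ball contain no two disjoint members; hence all of them lie in one ball $\Lambda_{3r_k+3\okl}(v_k^{\ast})$ of radius $O(r_k)$.

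Now the decay estimate. Fix $\omega\in\Omega_0$, $\lambda\in I$ and a generalized eigenfunction $f=f_\lambda$; by Proposition \ref{hsatz:VEF:poly}, $\|\ind_{\Lambda_R(v)}f\|\leq C_{\mathrm{poly}\nearrow}R^{d}(1+\metrik(v,0)+R+\okl)^{(d+2)/2}$ for all $v\in V$, $R>5\okl$. Given $x\in X_E$ with $D:=\metrik(x,0)$ large, choose a vertex $v$ with $\metrik(x,v)\leq\okl$, so $\Lambda_{20\okl}(x)\subset\Lambda_{21\okl}(v)$, and take the scale $r_k$ with $r_k\leq D<r_{k+1}$; then $r_k\in(D^{1/\alpha},D]$ and $D<r_k^{\alpha}\leq r_k^{\kappa}=\rho_k$, so $v$ lies in the $\rho_k$-ball. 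Enlarging $\rten$ we may assume $k\geq k_0$ and $r_k\geq 63\okl$ (whence $\Lambda_{21\okl}(v)\subset\Lint_{r_k}(v)$). If $\Lambda_{r_k}(v)$ is $(n,\lambda,\omega)$-good then $\lambda\in\res(H^{\Lambda_{r_k}(v)}(\omega))$ and Proposition \ref{hsatz:AL1} gives
\[
\|\ind_{\Lambda_{20\okl}(x)}f\|\leq\|\ind_{\Lint_{r_k}(v)}f\|\leq C_\mathrm{VEF}\,r_k^{-n}\,\|\ind_{\Lout_{r_k}(v)}f\|,
\]
which by the growth bound is $\leq C\,r_k^{\,d-n}(1+D+r_k+\okl)^{(d+2)/2}\leq C\,D^{-n_0}$, with $C$ depending only on $C_\mathrm{VEF}$, $C_{\mathrm{poly}\nearrow}$, $d$ and $n_0:=(n-d)/\alpha-(d+2)/2>0$ by \eqref{IP} (as $n>9\alpha d+d-2>d+\alpha(d+2)/2$). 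If instead $\Lambda_{r_k}(v)$ is bad, then $v$ lies within $O(r_k)$ of $v_k^{\ast}$; in that case one passes to the scales $r_{k+1},r_{k+2},\dots$, and for the first $j\geq k$ with $v$ outside the $O(r_j)$-neighbourhood of $v_j^{\ast}$ the ball $\Lambda_{r_j}(v)$ is good. The same chain of inequalities together with $r_j\leq D^{\alpha^{\,j-k}}$ then yields a bound $C\,D^{-((n-d)/\alpha-\alpha^{\,j-k}(d+2)/2)}$; because $n>9\alpha d+d-2$ this exponent is negative only for a uniformly bounded number of steps $j-k\leq m_{\max}(n,d,\alpha)$, so after replacing $n_0$ by $(n-d)/\alpha-\alpha^{m_{\max}}(d+2)/2$, still positive by \eqref{IP}, one again obtains $\|\ind_{\Lambda_{20\okl}(x)}f\|\leq C\,D^{-n_0}$. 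Finally one takes $\rten$ larger than the finitely many scales on which this reasoning could still fail.

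The main obstacle is exactly this upgrade from the Borel--Cantelli conclusion (``at most one bad ball per scale in the observation region'') to decay at \emph{every} sufficiently distant point: one must handle the points sitting near the bad cluster at their natural scale by moving to a bounded number of larger scales, which works only because $n$ is large enough to absorb the loss against the polynomial growth $D^{(d+2)/2}$ of the eigenfunction, while $\kappa$ must simultaneously be large enough that the observation region still contains $x$ and small enough that the pair count stays summable. Checking that the ranges prescribed by \eqref{IP} admit such a choice of $\kappa$ and that $m_{\max}$ is finite is the delicate point; the remaining steps --- the product-rule estimates already packaged in Proposition \ref{hsatz:AL1}, the covering counts of Lemma \ref{hsatz_rasterzahl}, and the verification $n_0>0$ --- are routine and are carried out in detail in \cite{diss_11}, paralleling the corresponding passage in \cite{Stollmann-01}.
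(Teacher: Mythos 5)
Your probabilistic setup and use of Propositions \ref{hsatz:VEF:poly} and \ref{hsatz:AL1} along the sequence $r_k=\rzero^{\alpha^k}$ match the paper's strategy, but there is a genuine gap at the heart of the argument: you do not establish that the vertex $v$ near $x$ ever escapes the bad cluster. You assert that ``for the first $j\geq k$ with $v$ outside the $O(r_j)$-neighbourhood of $v_j^{\ast}$ the ball $\Lambda_{r_j}(v)$ is good,'' but give no reason for such a $j$ to exist --- the cluster centre $v_j^{\ast}$ is allowed to drift with $j$, and nothing in your Borel--Cantelli construction prevents it from containing $v$ at every scale. Nor is $j-k$ uniformly bounded by any induction parameter: the condition $n>9\alpha d+d-2$ controls the exponent in your final estimate, not the geometry of bad balls across scales. (As a secondary point, the exponent you compute uses the crude lower bound $r_j\geq D^{1/\alpha}$; with the correct $r_j\geq D^{\alpha^{\,j-k-1}}$ the bound $r_j^{\,d-n}(1+D+r_j)^{(d+2)/2}\lesssim D^{\,\alpha^{\,j-k-1}((d-n)+\alpha(d+2)/2)}$ in fact improves with $j$, so your $m_{\max}$ plays no role --- but this does not repair the escape problem.)

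The paper closes this gap by anchoring the bad cluster at a vertex $v_0$ where the generalized eigenfunction $f$ does not vanish. The key observation is that if $\|\ind_{\Lambda_{20\okl}(v_0)}f\|>0$, then $\Lambda_{r_k}(v_0)$ must be $(n,\lambda,\omega)$-bad for all sufficiently large $k$: otherwise, applying Proposition \ref{hsatz:AL1} together with the polynomial growth bound \ref{hsatz:VEF:poly} along an infinite subsequence of good scales forces $\|\ind_{\Lambda_{20\okl}(v_0)}f\|\to 0$, a contradiction. Once $\Lambda_{r_k}(v_0)$ is known to be bad, the vertex-indexed event $\Omega_k(v_0)$ --- defined precisely so that no second bad ball with centre in the annulus $A_{k+1}(v_0)$ can coexist with a bad $\Lambda_{r_k}(v_0)$ --- forces all balls $\Lambda_{r_k}(v)$ with $v$ in that annulus to be good, and the decay follows for points far from $v_0$ (hence far from $0$, since $v_0$ is fixed for a given $f$). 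This anchoring step, which pins down where the bad cluster must sit, is the essential idea missing from your proposal and cannot be replaced by a Borel--Cantelli argument centred at the root alone.
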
 
A statement like this was already proven in \cite{DreifusK}. As we only can get polynomial decay, the proof can be simplified.

 \begin{proof}
The choice of radius $\rzero$ guaranties, that the induction start and the induction theorem are valid and that $G(I,r_k,n,\xi)$ holds true for the sequence of radii $r_k={\rzero}^{\alpha^k}$ with $k\in \nz\cup\{0\}$.

For $v_0\in V$ we look at the following induced subgraphs
\begin{equation*}
A_{k+1}(v_0):=\Lambda_{2r_{k+1}+2\okl}(v_0) \setminus \Lambda_{2r_k+\okl}(v_0), \qquad k\in \nz\cup \{0\}
\end{equation*}
with the cover-raster $V_{k+1}(v_0):=V_{2r_{k+1}+2\okl,\frac{r_k}{10}}(v_0) $.
By $\Omega_k(v_0)$ we denote the event
\begin{align*}
\Omega_k(v_0):&=\left\{\omega\in \Omega\with \esgibt \lambda\in I, x\in A_{k+1}(v_0)\cap V_{k+1}(v_0) : \right.\\
&\hspace{2.93cm} \Lambda_{r_k}(x) \text{ and }\Lambda_{r_k}(v_0) \text{ are } (n,\lambda,\omega)\text{-bad}  \Big\}.
\end{align*}
If we use $G(I,r_k,n,\xi)$ to estimate the probability of two disjoint $(n,\lambda,\omega)$-bad balls and lemma \ref{hsatz_rasterzahl} to estimate the maximal number of raster points we have
\begin{align*}
\PP(\Omega_k(v_0)) 
&\leq \cp \,\frac{10\cdot( 2r_{k+1} +2\okl)^d}{r_k}\,  r_k^{-2\xi}\\
&\leq 10\cdot 4^d\cdot\cp\cdot r_k^{\alpha d-1-2\xi}\\
&\hspace{-0.12cm}\underset{\eqref{IP}}{\leq} r_k^{-\delta}
\end{align*}
for a $\delta<0$ and after $k\geq k_0$, depending on $\cp$, $\xi$, $d$ and $\alpha$ (see remark A.1 point (vii) for details).
Hence the sum over the probabilities of the events $\Omega_k$ is convergent
\begin{align*}
\sum\limits_{k\geq k_0}^\infty \PP(\Omega_k(v_0)) \leq\sum\limits_{k\geq k_0}^\infty \left( r_0^{\alpha^k} \right)^{-\delta}=\sum\limits_{k\geq k_0}^\infty \left(r_0^{-\delta}\right)^{\alpha^k}<\infty,
\end{align*}
as $r_0^{-\delta}<1$ and $\alpha^k >k$ after a certain  $k$.
The Borel-Cantelli-lemma yields
\begin{equation*}
\PP\left\{ \omega\in\Omega \text{ with } \esgibt k_1\in \nz\text{, s.\,t.}\fa k\geq k_1: \omega\not\in \Omega_k(v_0) \right\}=1.
\end{equation*}
The intersection of all those events over $v\in V$ still has measure one
and gives the claimed event
\begin{equation*}
\Omega_0:=\left\{ \omega\in \Omega\text{ with } \fa v\in V \ \esgibt k_v\in\nz, \text{s.\,t.} \fa k\geq k_v: \omega\not\in\Omega_k(v) \right\}.
\end{equation*}
Let $\omega\in \Omega_0$ and $f$ be a generalized eigenfunction of $H^{P,L}(\omega)$ to $\lambda\in I$. Let $v_0 \in V$ with $\left\| \ind_{\Lambda_{20\okl}(v_0)} f  \right\|>0 $ (otherwise $f\equiv 0$ holds). 
Then $\Lambda_{r_k}(v_0)$ is not $(n,\lambda,\omega)$-good for infinitely many $k$, since otherwise propositions \ref{hsatz:AL1} and 
\ref{hsatz:VEF:poly} provide
\begin{flalign*}
\left\| \ind_{\Lambda_{20\okl}(v_0)}f \right\|
&\leq \left\| \ind_{\Lint_{r_k}(v_0)}f \right\|&\\
&\leq C_\mathrm{VEF}
\left\| \ind_{\Lout_{r_k}(v_0)}\left( H^{\Lambda_{r_k}(v_0)}(\omega)-\lambda\right)^{-1}\ind_{\Lint_{r_k}(v_0)}  \right\|
\left\| \ind_{\Lout_{r_k}(v_0)} f\right\|\\
& \leq C_\mathrm{VEF} \cdot {r_k}^{-n} \cdot C_{\mathrm{poly}\hspace{-1pt}\nearrow} \cdot {r_k}^d \cdot \left( 1+\metrik(v_0,0)+r_k+\okl \right)^{\frac{d+2}{2}}&\\
&\leq C_{v_0} \cdot {r_k}^{\frac{3d+2}{2}-n},
\end{flalign*}
which converges to zero for any infinite subsequence of $(r_k)$ with $(n,\lambda,\omega)$-good balls---yielding a contradiction.
Thus there exists a $k_2 \in \nz$, such that for all $k\geq k_2$ the balls $\Lambda_{r_k}(v_0)$ are 
$(n,\lambda,\omega)$-bad.
Hence we conclude for $k\geq \max\{k_2,k_{v_0}\}$ and $v\in A_{k+1}(v_0)\cap V_{k+1}(v_0) $, that $\Lambda_{r_k}(v)$ is $(n,\lambda,\omega)$-good.
With the annuli $A_{k+1}(v_0)$ we can cover the whole graph $\Gamma$ and for $k_3:=\max\{k_2,k_{v_0}\}$ we have
\begin{equation*}
\bigcup\limits_{k\geq k_3} A_{k+1}(v_0)=\Gamma \setminus \Lambda_{2r_{k_3}+\okl}(v_0).
\end{equation*}
For any $y\in A_{k+1}(v_0)$, with $k\geq k_3$, there is such a center $y_1\in A_{k+1}(v_0)\cap V_{k+1}(v_0)$, such that $\Lambda_{20\okl}(y)\subset \Lint_{r_k}(y_1)$ (which follows from lemma \ref{hsatz_ueberdeckung} for $r_k\geq 780 \okl$).
With proposition \ref{hsatz:AL1} we estimate
\begin{align*}
\left\| \ind_{\Lambda_{20\okl}(y)} f \right\| &\leq \left\| \ind_{\Lint_{r_k}(y_1)} f  \right\|\\
& \leq C_\mathrm{VEF}\cdot {r_k}^{-n} \cdot \left\| \ind_{\Lout_{r_k}(y_1)} f \right\|,
\intertext{with the polynomial growth from proposition \ref{hsatz:VEF:poly} we get}
&\leq C_\mathrm{VEF}\cdot {r_k}^{-n} \cdot C_{\mathrm{poly}\hspace{-1pt}\nearrow} \cdot {r_k}^d \cdot \left(1+\metrik(y_1,0)+r_k+\okl \right)^{\frac{d+2}{2}}\\
&\leq \tilde{C} \cdot {r_k}^{d-n}\cdot \left(1+\metrik(y_1,y)+\metrik(y,v_0)+\metrik(v_0,0)+r_k+\okl\right)^{\frac{d+2}{2}}.
\intertext{With $\metrik(y,v_0)\in [2r_k+\okl,2r_{k+1}+3\okl ]$, i.\,e. $\left(\frac{\metrik(y,v_0)}{3}\right)^{\frac{1}{\alpha}}\leq r_k\leq \frac{\metrik(y,v_0)}{2} $
and $\metrik(y,y_1)\leq \frac{r_k}{3}+\okl-20\okl $ we continue by}
& \leq \tilde{C} \cdot {r_k}^{d-n} \cdot \left(1+ \frac{4 r_k}{3}-19\okl +\metrik(y,v_0)+\metrik(v_0,0)\right)^{\frac{d+2}{2}}\\
& \leq \tilde{C} \cdot \left(\frac{\metrik(y,v_0)}{3}\right)^{\frac{d-n}{\alpha}}\cdot\left(2\metrik(y,v_0)+\metrik(v_0,0)\right)^{\frac{d+2}{2}}\\
&\leq C_{\mathrm{poly}\hspace{-1pt}\searrow} \cdot {\metrik(y,v_0)}^{\frac{d}{\alpha}+\frac{d+2}{2}-\frac{n}{\alpha}}.
\end{align*}
Since \eqref{IP} we have that the exponent is negative.
The decay-constant $C_{\mathrm{poly}\hspace{-1pt}\searrow}$ does not depend on the index of the annulus $k$, but only on  $d$, $\alpha$, $C_\mathrm{VEF}$ and $C_{\mathrm{poly}\hspace{-1pt}\nearrow}$.
Thereby we proved polynomial decay around $v_0$ beginning from the radius $\rten:=2r_{k_3}+2\okl$.
\end{proof} 

For a polynomially decaying function---in the sense of the last proposition---we need the same degree of decay as for a pointwise decaying functions, to prove, that the function actually  lies in $L^2$.
\begin{bem} 
\label{bem:SL:fallen_L2}
Let $\Gamma$ be a metric graph with \geomalles. Then $f=(f_e)_{e\in E}$ with $f_e \in L^2(I_e)$ lies in $L^2(X_E)$, if
\begin{align*}
\fa v \in V: \qquad \left\| \ind_{\Lambda_{20\okl}(v)} f \right\| \leq \metrik(v,0)^{-m} \quad \mathrm{with}\ m> \frac{d+1}{2}.
\end{align*}
\end{bem}
\begin{proof} 
Let $\Lambda_k:=\Lambda_{20k\okl}(0)\setminus \Lambda_{20(k-1)\okl-\okl}(0)$. 
Then $\Gamma$ can be covered by the annuli $\Lambda_k$.
For any point $x \in X_{\Lambda_k} $ we have $\metrik(x,0)\geq 20k\okl-21 \okl$. 
Moreover we can cover any annulus using lemma \ref{hsatz_ueberdeckung} and the cover-raster $V_{20k\okl,5\okl}(0)$ with balls of radius $20\okl$. 
\begin{align*}
\left\| \ind_{\Lambda_k}f \right\|^2 &\leq \sum\limits_{v\in \Lambda_k \cap V_{20k\okl,5\okl}(0)} \left\| \ind_{\Lambda_{20\okl}(v)} f\right\|^2\\
&\hspace{-0.075cm}\underset{\text{ass.}}{\leq} \sum\limits_{v\in \Lambda_k \cap V_{20k\okl,5\okl}(0)} {\metrik(v,0)}^{-2m}\\
&\hspace{-0.5cm}\underset{\text{lemma }\ref{hsatz_rasterzahl}}{\leq} \cp \,\frac{(20k\okl)^d}{5\okl}\, (20k\okl-21\okl)^{-2m}\\
&\leq C \cdot k^d \cdot (k-2)^{-2m} \qquad (k\geq 3).
\end{align*}
Now we have
\begin{align*}
\sum\limits_{k=4}^\infty \left\| \ind_{\Lambda_k}f \right\|^2 \leq \sum\limits_{k=4}^\infty C\frac{k^d}{(k-2)^2m}
\leq C\cdot 2^d \sum\limits_{k=4}^\infty \frac{(k-2)^d}{(k-2)^{2m}}<\infty,
\end{align*}
if $d-2m<-1$, which was the assumption.
\end{proof}
From the polynomial decay of the generalized eigenfunctions we are now able to conclude pure point spectrum and prove our main theorem:  
\begin{satz} 
\label{satz:hauptsatz}
Let $\Gamma$ be a metric graph with \geomalles\ and $H^{P,L}(\omega)$ a random operator with \RBPLS\ and \potalles.
Let $\rzero$ be the radius from the last proposition and $n\geq 9\alpha d +d-2$.
Then $H^{P,L}(\omega)$ has $\omega$-almost surely pure point spectrum in $I=\left[\leos,\leos+\frac{1}{2}{\rzero}^{\beta-2}\right]$
with polynomially decaying eigenfunctions.
\end{satz}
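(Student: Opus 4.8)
The plan is to combine the polynomial decay of generalized eigenfunctions (Proposition \ref{hsatz:SL:poly}) with the eigenfunction expansion (Corollary \ref{cor:VEF}) via a now-standard argument going back to \cite{DreifusK} and \cite{Stollmann-01}. First I would fix $\omega$ in the almost-sure event $\Omega_0$ produced by Proposition \ref{hsatz:SL:poly}, and fix a spectral measure $\varrho=\varrho_\omega$ for $H^{P,L}(\omega)$. By Corollary \ref{cor:VEF}, for $\varrho$-almost every $\lambda\in\sigma(H^{P,L}(\omega))$ there is a generalized eigenfunction $f_\lambda\in W^{2,2}\lok(X_E)$ with $\we^{-1}f_\lambda\in L^2(X_E)$ (using the weight function from Remark \ref{bem:VEF:gewfkt}); in particular such an $f_\lambda$ exists for $\varrho$-a.e.\ $\lambda\in I$. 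The key observation is that it suffices to show $\varrho(I\setminus I_{\mathrm{pp}})=0$, where $I_{\mathrm{pp}}$ is the set of $\lambda\in I$ that are eigenvalues of $H^{P,L}(\omega)$: indeed, if the singular continuous and absolutely continuous parts of $\varrho$ put no mass on $I$, then $\ind_I(H^{P,L}(\omega))$ is a sum of rank-one eigenprojections, i.e.\ the spectrum in $I$ is pure point.

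The heart of the matter is then the following: for $\varrho$-a.e.\ $\lambda\in I$ the generalized eigenfunction $f_\lambda$ is in fact a genuine $L^2$-eigenfunction. This is exactly where Proposition \ref{hsatz:SL:poly} enters. Apply it with the chosen $n\geq 9\alpha d+d-2$ (so that the induction parameters \eqref{IP} can be met) and the interval $I=[\leos,\leos+\tfrac12\rzero^{\beta-2}]$: for each such $\lambda$ and each $\omega\in\Omega_0$ there are a constant $C_{\mathrm{poly}\hspace{-1pt}\searrow}$ and a radius $\rten$ with
\begin{equation*}
\|\ind_{\Lambda_{20\okl}(x)}f_\lambda\|\leq C_{\mathrm{poly}\hspace{-1pt}\searrow}\,\metrik(x,0)^{-n_0}
\qquad\text{for }\metrik(x,0)\geq\rten .
\end{equation*}
Since $n_0=\tfrac{d}{\alpha}+\tfrac{d+2}{2}-\tfrac{n}{\alpha}$ and the choice $n\geq 9\alpha d+d-2$ forces $n_0>\tfrac{d+1}{2}$ (here one uses $\alpha>1$ and $\frac{n}{\alpha}-\frac{d}{\alpha}>\frac{d+1}{2}$, which follows from $n-d>\frac{\alpha(d+1)}{2}$; this needs a short check that $9\alpha d+d-2-d>\frac{\alpha(d+1)}{2}$, true for all $d\geq 1$, $\alpha>1$), Remark \ref{bem:SL:fallen_L2} applies and yields $f_\lambda\in L^2(X_E)$. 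A nontrivial $L^2$-solution of $-f''+\pot_\omega f=\lambda f$ satisfying the boundary conditions (Proposition \ref{hsatz:VEF_RB}) is an eigenfunction of $H^{P,L}(\omega)$, so $\lambda\in I_{\mathrm{pp}}$. Thus $\varrho$-a.e.\ $\lambda\in I$ lies in $I_{\mathrm{pp}}$, which gives $\varrho(I\setminus I_{\mathrm{pp}})=0$ as required, and the eigenfunctions decay polynomially by construction.

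Finally I would assemble the pieces: the set $\Omega_0$ has $\PP(\Omega_0)=1$ by Proposition \ref{hsatz:SL:poly}, and for $\omega\in\Omega_0$ the argument above shows the spectral projection $\ind_I(H^{P,L}(\omega))$ is carried by eigenvalues, i.e.\ the restriction of $H^{P,L}(\omega)$ to the spectral subspace over $I$ has pure point spectrum with polynomially decaying eigenfunctions; the quantitative decay rate $n_0$ is uniform over $\omega\in\Omega_0$ and $\lambda\in I$ (only $\rten$ depends on the eigenfunction). I expect the main obstacle to be the bookkeeping around the eigenfunction expansion: one must argue that a set of spectral values of full $\varrho$-measure being eigenvalues actually implies absence of continuous spectrum in $I$. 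The clean way is to invoke the Poerschke--Stolz--Weidmann expansion theorem (as in \cite{PSW}, and used in \cite{LSS-08}): the spectral measure is supported, up to a $\varrho$-null set, on those $\lambda$ admitting polynomially bounded generalized eigenfunctions, and once almost every such $\lambda$ furnishes an honest $L^2$-eigenfunction one concludes pure point spectrum. A secondary (but purely arithmetic) point to verify carefully is that the parameter constraints \eqref{IP} together with $n\geq 9\alpha d+d-2$ indeed make the final exponent $\tfrac{d}{\alpha}+\tfrac{d+2}{2}-\tfrac{n}{\alpha}$ strictly less than $-\tfrac{d+1}{2}$, so that Remark \ref{bem:SL:fallen_L2} is genuinely applicable; this is the reason the bound on $n$ was chosen larger than what the multiscale induction alone would need.
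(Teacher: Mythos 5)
Your proposal is correct and follows essentially the same route as the paper: apply Proposition \ref{hsatz:SL:poly} on the almost-sure event $\Omega_0$ to obtain polynomial decay of generalized eigenfunctions, use Remark \ref{bem:SL:fallen_L2} to place them in $L^2$, invoke Corollary \ref{cor:VEF} for the eigenfunction expansion, and conclude via countability of eigenvalues that the spectral measure on $I$ is atomic. The only imprecision is arithmetic: you wrote $n_0 = \tfrac{d}{\alpha}+\tfrac{d+2}{2}-\tfrac{n}{\alpha}$ (which is the negative exponent; the decay rate is its negation $\tfrac{n}{\alpha}-\tfrac{d}{\alpha}-\tfrac{d+2}{2}$), and the inequality you then verify, $n-d>\tfrac{\alpha(d+1)}{2}$, is strictly weaker than the needed $\tfrac{n-d}{\alpha}>\tfrac{2d+3}{2}$; the stronger one still holds under $n\geq 9\alpha d + d - 2$ with $d\geq 1$, $\alpha>1$, but you should carry out the correct check rather than the weaker one.
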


\begin{proof} 
By assumption we can apply proposition \ref{hsatz:SL:poly}. Thus we get a set $\Omega_0 \subset \Omega$ of measure one, such that
generalized eigenfunctions to operators corresponding to those $\omega$ decay polynomially of degree $\frac{n}{\alpha}-\frac{d}{\alpha}-\frac{d+2}{2} $.
By remark \ref{bem:SL:fallen_L2} those functions lie in $L^2(X_E)$, if the degree satisfies
\begin{equation*}
\frac{n}{\alpha}-\frac{d}{\alpha}-\frac{d+2}{2} > \frac{d+1}{2}.
\end{equation*}
This is true by $n\geq 9\alpha d +d-2$.

As $L^2(X_E)$ is separable, $H(\omega):=H^{P,L}(\omega)$ can have at most countably many different eigenvalues.
Let $\rho_{H(\omega)}$ be the spectral measure of $H(\omega)$. Then---by corollary \ref{cor:VEF}---there exist generalized eigenfunctions for $\rho_{H(\omega)}$-almost all $\lambda \in I\cap \sigma(H(\omega))$, which by the above calculations lie in $L^2(X_E)$.
We denote the set of corresponding $\lambda$ with $A_0$. Thus $A_0$  has to be countable and we know that the spectral measure restricted to $I$ is supported on $A_0$ and is discrete. Hence $H^{P,L}(\omega)$ has pure point spectrum in $I$.
\end{proof}

\begin{bem}
\begin{enumerate}
 \item 
The spectrum of the operator family $(H^{P,L}(\omega))$ is in general not deterministic.
If $H^{P,L}(\omega)$ has spectrum in $I$, it is pure point spectrum. But it doesn't have to be spread out over the whole interval. Moreover the measure of $\omega$'s, corresponding to operators $H^{P,L}(\omega)$ not having spectrum in $I$, might have measure greater than zero.
See example \ref{bsp:nichtdet} for an illustration.

\item 
With all the necessary estimates in sections \ref{sec:CTE} to \ref{ab:VEF} we can use the multiscale analysis from \cite{ExnerHS-07}
to conclude spectral localization with exponential decaying eigenfunctions and dynamical localization for all Laplacians with boundary conditions of the form \RBPLS\ on the metric graph $\gz^d$ with random potentials with \potalles\ and parameters $c_-=c_+=1$.

\item 
This is the first localization proof for metric graphs, leaving $\gz^d$ or special metric trees.  Also the possible boundary conditions were extended from $\delta$-boundary conditions to all local boundary conditions, which yield a lower bounded self-adjoint operator.

\item
We only considered a single particle model. 
For boundary conditions and spectral properties of singular two-particle Laplacians on finite, compact metric graphs see \cite{BolteK}.
There are is also a multiscale analysis for multi-particle models developed in \cite{Chulaevsky} and applied to a many particle quantum graph over $\gz^d$ with Kirchhoff boundary conditions in \cite{Sabri}.
\end{enumerate}
\end{bem}

\section{Explanations and Examples}
\label{ab:el_va}
In this section we give explanations and applications of the obtained localization theorem. In particular we will analyze the localization theorem in the case when the considered operator family has no deterministic spectrum and only a few realizations have spectrum at the lower bound of the spectra.
\begin{bsp}
\label{bsp:nichtdet}
We state an example of a non-deterministic model, where the lower end of the spectrum is known, but changes dramatically with $\omega$.
Let $\Gamma=(E,V,l,i,j)$ be a metric graph with \geomalles\ and a uniform polynomial growth of degree $d$.
Let $H^{P,L}$ be a Laplacian with \RBPLS.
We denote the lower bound of the operator $H^{P,L}$ by $\leos$ and choose the random potential according to the following
\begin{align*}
q_-&:=1, \qquad q_+:=2,\\
\nu_e&:=\begin{cases} (-\leos+3) \cdot \ind_{I_e},& \leos < 3\\ \ind_{I_e},& \leos \geq 3 \end{cases}\\
\dichte_\mu&:=\begin{cases} (2d)(x-1)^{2d-1}, & 1\leq x\leq 1+2^{-\frac{1}{2d}} \\ \left(2-2^{1-\frac{1}{2d}}\right)^{-1}, & 1+2^{-\frac{1}{2d}}< x\leq 2. \end{cases}
\end{align*}
The choice of $\dichte_\mu$ guaranties \eqref{pot:unord}. 
By construction $\pot_\omega=(\omega_e \nu_e)$ obviously satisfies \potalles\ and we have $H^{P,L}(\omega)\geq 3$.

Now we modify the given graph and operator by adding an additional edge $\tilde{e}$ to an arbitrary vertex. We define $l(\tilde{e})=\pi$  as its length and set Dirichlet boundary conditions at both end points.
Furthermore we set $\nu_{\tilde{e}}=\ind_{I_{\tilde{e}}}$.

The modified graph and operator will be denoted by $\tilde{\Gamma}$ and $\tilde{H}^{P,L}$. note that the requirements for the localization are still satisfied.
With the decoupling of the Dirichlet boundary conditions and proposition \ref{hsatz:weyl_dir} part 3 we see
\begin{align*}
\sigma\left(\tilde{H}^{P,L}\right)=\sigma\left(H^{P,L}\right)\cup \{n^2 \text{ with } n\in \nz\}.
\end{align*}
Thus the spectrum of $\tilde{H}^{P,L}(\omega)$ starts at the lowest Dirichlet eigenvalue coming from $\tilde{e}$, which is equal to $1+\omega_{\tilde{e}}$.
The localization theorem yields pure point spectrum of $\tilde{H}^{P,L}(\omega)$ in the interval $(2,\ep)$, for some small $\ep$.
Here $\tilde{H}^{P,L}(\omega)$ has at most one eigenvalue in this interval and the measure of all realizations having no eigenvalue in the interval at all can be calculated using the density $\dichte_\mu$.
\end{bsp}

This is an pathologic example, but with  a more complicated setup it might be totally unclear how the lower end of the spectrum of the random operator behaves.

The localization theorem is still strong, if the considered operator family has deterministic spectrum. A big group of such operators can be found on Cayley graphs, which will be explained in the following.
\begin{defn}
Let $G$ be a finitely generated group and $S$ its generating set. Let $l:S\to \rz^+$ be a given function. We define the metric Cayley graph
$\Gamma(G,S)=(E,V,l,i,j)$ by 
\begin{align*}
V&=G & E&=\{ (g,h) \with g^{-1}h\in S\}\\
l(e)&=l((g,h)):=l(g^{-1}h)\\
i(e)&=i((g,h)):=g & j(e)&=j(g,h):=h
\end{align*}
\end{defn}
We will give the following remarks on loops and multiple edges.
\begin{itemize}
\item Loops correspond to unities in the generating set.
\item If $s\in S$ and $s^{-1}$, which are no unit, then there are to edges $(g,h)$ and $(h,g)$ 
between two vertices, but with different direction. In the study of undirected graphs, both edges will be identified with each other.
\item Multiple edges are excluded in this notation. They might be added by defining the generating set as finite subset of $G\times \nz_0$.
\end{itemize}
The growth of Cayley graphs obeys
\begin{itemize}
\item The growth of a metric Cayley graph corresponds to the combinatoric Cayley graph and is equal for each vertex, as the graph is translation invariant.
\item Moreover the growth equals the growth of the group $G$, where neighborhoods are defined by
\begin{equation*}
V_n(G):=\left\{ g\in G \with g=\prod\limits_{k=1}^n s_k : s_k \vee s_k^{-1}\in S \right\}.
\end{equation*}
\end{itemize}
We know for the growth of groups from \cite{Gromov}:
\begin{satz}
Each finitely generated group has polynomial growth, iff the group is virtually nilpotent.
\end{satz}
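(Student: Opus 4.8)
The plan is to prove the two implications separately, since they differ enormously in depth.

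For the ``if'' direction, I would show that a virtually nilpotent finitely generated group has polynomial growth. First I would reduce to the nilpotent case: the growth type of a finitely generated group is a quasi-isometry invariant, and a group is quasi-isometric to each of its finite-index subgroups, so it suffices to bound the growth of a finitely generated nilpotent group $N$. For such an $N$ I would induct along the lower central series $N=\gamma_1(N)\supseteq\gamma_2(N)\supseteq\cdots\supseteq\gamma_{c+1}(N)=\{1\}$: picking generators adapted to this filtration, every element of the ball of radius $n$ can be put into a bounded normal form, an ordered product of these generators with integer exponents, where an exponent attached to a generator lying in $\gamma_i(N)/\gamma_{i+1}(N)$ is of size $O(n^i)$. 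Counting such normal forms yields $|V_n(N)|\leq C\,n^d$ with $d=\sum_{i\geq 1} i\cdot\operatorname{rk}\big(\gamma_i(N)/\gamma_{i+1}(N)\big)$, which is the upper bound in the Bass--Guivarc'h formula and is all that is needed here.

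The ``only if'' direction --- polynomial growth forces virtual nilpotency --- is Gromov's theorem, and I would not reprove it but cite \cite{Gromov}. For orientation I would recall the architecture of that argument: one rescales the word metric on the Cayley graph and passes to a Gromov--Hausdorff limit; the limit turns out to be a finite-dimensional, locally compact, connected, locally connected, homogeneous geodesic space; the solution of Hilbert's fifth problem (Montgomery--Zippin) then realizes its isometry group as a Lie group acting transitively; from this one extracts a finite-index subgroup of $G$ admitting a surjection onto $\gz$, whose kernel is again finitely generated of strictly smaller polynomial growth degree, so an induction on that degree completes the proof.

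The main obstacle is precisely this converse: there is no elementary route, and the argument genuinely relies on the structure theory of locally compact groups. Since we invoke the statement only in order to decide which metric Cayley graphs satisfy the geometric hypotheses of our localization result, referring to \cite{Gromov} is adequate and we do not spell out the details.
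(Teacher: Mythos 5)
Your proposal is correct and, in effect, matches the paper, which states the result without proof and simply cites \cite{Gromov}; your outline of the Bass--Guivarc'h upper bound for the nilpotent direction and of the Gromov--Hausdorff/Montgomery--Zippin architecture for the converse is accurate but supplies more than the paper attempts. Citing \cite{Gromov} for both directions, as the paper does, is entirely adequate for the use made of the theorem here.
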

Thus there exists a characterization of polynomial growing Cayley graphs.
\begin{defn}
Let $\Gamma(G,S)$ be a metric Cayley graph.
The group operation defines an operation $\circ:E\times G \to E$ in the following way: 
For an edge $e=(g,h)$ corresponding to the generator $s=g^{-1}h$, i.\,e. $(g,h)=(g,gs)$, it is defined by
\begin{equation*}
e\circ k :=(k\cdot i(e),k\cdot j(e)) = (k g,k h)=(k g,k g g^{-1} h)=(k g,k g s)
\end{equation*}
for all $e\in E$ and $k\in G=V$.
\end{defn}
This mapping preserves the group structure, as an edge corresponding to a generator $s$  is mapped to an edge generated by $s$.
\begin{satz}
\label{satz_CayleyG}
Let $\Gamma(G,S)$ be a metric Cayley graph of polynomial growth. Let $(P,L)$ be one parametrization of boundary conditions \RBPLS\ for a vertex with degree $2|S|$. 
Moreover let for each generator $s\in S$ be a potential $\nu_s$ be given, which satisfies \potalles\ on the edge $(1,1 s)$.
Then the operator $H^{P,L}(\omega)$ with
\begin{align*}
H^{P,L}(\omega)f&:=-f'' + \left(\sum\limits_{s\in S}\nu_{s}f_{(g,gs)}\right)_{g\in G},\\
\dom(H^{P,L}(\omega))&:=\{ f\in W^{2,2}(X_E) \with \fa g\in G : P \tr{g}(f)=0,\\
&\hspace{4.96cm} L\tr{g}(f)=(1-P)\str{g}(f') \}
\end{align*}
is ergodic and has deterministic spectrum.
\end{satz}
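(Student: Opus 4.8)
The plan is to exhibit an ergodic, probability-preserving action of the group $G$ on $(\Omega,\PP)$ together with a unitary representation of $G$ on $L^2(X_E)$ under which the family $(H^{P,L}(\omega))$ is covariant; deterministic spectrum then follows from the Kirsch--Martinelli theorem \cite{KirschM}, combined with the measurability of $\omega\mapsto H^{P,L}(\omega)$ already available via the form criterion (cf.\ remark \ref{bem_meas_op_fam}). Throughout one uses that $G$ is infinite, which is forced by the graph being infinite.

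First I would record the geometric action. For $k\in G$ the map $e\mapsto e\circ k$ permutes $E$, carries an edge labelled by a generator $s$ to another edge labelled by $s$, and intertwines $i$, $j$, $l$ with left translation $g\mapsto kg$ on $V=G$; hence it extends to an isometry of $(X_\Gamma,\metrik)$. Define $\tau_k:\Omega\to\Omega$ by $(\tau_k\omega)_e:=\omega_{e\circ k}$. Since $\PP=\bigotimes_{e\in E}\mu$ has identical factors and $e\mapsto e\circ k$ is a bijection of the index set, $\tau_k$ preserves $\PP$, and the $\tau_k$ form a measure-preserving action. Each $G$-orbit in $E$ is infinite (the edge stabilisers are trivial, or of order $2$ when the generator in question is an involution), so $E$ splits into finitely many infinite orbits; a product measure over an index set carrying an action with only infinite orbits is mixing, hence $\tau$ is ergodic.

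Next I would transport this to the Hilbert space. Identifying $I_e$ with $I_{e\circ k}$ through the common length $l(s)$, left translation carries $\{e\}\times I_e$ isometrically onto $\{e\circ k\}\times I_{e\circ k}$ and so induces a unitary $U_k$ on $L^2(X_E)=\bigoplus_e L^2(I_e)$; the $U_k$ assemble into a unitary representation of $G$. Because translation preserves edge lengths and respects the incidence and orientation data, $U_k$ maps $W^{2,2}(X_E)$ onto itself, commutes with $f\mapsto-f''$, and transports the trace and signed trace at a vertex $v$ to those at $kv$ after the canonical reordering of $E_v$; since the parametrisation $(P,L)$ is the \emph{same} at every vertex of degree $2|S|$, the domain $\dom(H^{P,L})$ is $U_k$-invariant and the boundary conditions are respected. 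The random potential $\pot_\omega$ has single-site profiles $\nu_e$ depending only on the label of $e$, so $\nu_{e\circ k}=\nu_e$, and conjugation by $U_k$ turns the coupling field $\omega$ into $\tau_k\omega$. Altogether one obtains
\begin{equation*}
U_k\,H^{P,L}(\omega)\,U_k^{*}=H^{P,L}(\tau_k\omega)\qquad\text{for all }k\in G,\ \omega\in\Omega .
\end{equation*}

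Given covariance, ergodicity and measurability, the Kirsch--Martinelli theorem \cite{KirschM} produces a closed set $\Sigma\subset\rz$ with $\sigma(H^{P,L}(\omega))=\Sigma$ for $\PP$-almost every $\omega$, and likewise deterministic pure point, absolutely continuous and singular continuous parts. I expect the one genuinely delicate point to be the orientation and relabelling bookkeeping in the construction of $U_k$ --- checking that it intertwines the vertex boundary conditions at \emph{all} vertices and not merely the differential expression --- whereas the ergodicity of $\tau$ is a routine Bernoulli-type argument.
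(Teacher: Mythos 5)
Your proof takes essentially the same route as the paper: you define the measure-preserving shift $\tau_k$ (the paper's $T_k$, up to the $k\leftrightarrow k^{-1}$ convention), the unitary representation $U_k$ acting by relabelling edges, verify the covariance relation and the ergodicity of the Bernoulli shift, and invoke Kirsch--Martinelli together with the measurability from remark \ref{bem_meas_op_fam}. The paper leaves the covariance computation and the ergodicity of the edge-shift implicit with a reference to \cite{Stollmann-01}, whereas you spell those two points out; the argument is the same.
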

\begin{proof}
We define an ergodic operator family $T_k:\Omega\to \Omega$ and a family of unitary operators  $U_k$ on $L^2(X_E)$ by
\begin{align*}
q_e(T_k(\omega))&:= q_{e \circ k^{-1}}(\omega)  \qquad \text{for all } k\in G,\\
 (U_k f)_e(t)&:=f_{(e\circ k)}(t), \qquad (U_k^* f)_e(t)=f_{e\circ k^{-1}}(t)\qquad \text{for all } k\in G.
\end{align*}
Using $(T_k)$ and $(U_k)$ we can calculate the covariance condition and conclude ergodicity of the random operator, see section 1.2 in \cite{Stollmann-01} for more information and general theory.
Together with measurability (see remark \ref{bem_meas_op_fam}) we conclude deterministic spectrum by theorem 1 in \cite{KirschM}.
\end{proof}
Thus this model has spectrum at the lower edge of all spectra with measure one and we can apply the localization theorem.

\begin{appendix}
\section{Induction parameter}
\label{app}
In this section we want to demonstrate that the induction parameters are well defined and the stated relations between them are satisfied.
We made the following assumptions in (IP):
\begin{equation}
\tag{IP}
\left.
\begin{aligned}
&q\in \left(7d-6,7d\right), & \\
& \tau > \frac{3d}{2}-1,&\\
&\xi\in\left(2d-2,\min\left\{ 2\tau-d,\frac{q-3d+2}{2}  \right\}\right),&\\
&\alpha\in\left(  1,\min \left\{ \frac{2+2\xi}{2d+\xi},\frac{2+q}{3d+2\xi} \right\} \right) ,&\\
&\theta\in\left(\frac{q+d}{n},\frac{n+2-d-\alpha d}{\alpha n}\right),&\\
&n>9\alpha d+d-2. &  & 
\end{aligned}
\right\}
\end{equation}

\begin{bem}
If the assumptions in \ref{IP} are satisfied, the following relations used in the localization-proofs are valid:
\begin{enumerate}[label=(\roman{*})]
\item $\tau>\frac{d}{2} $, needed for the initial length scale estimate in \ref{satz:ALA}, disorder assumption, 
\item $\xi \in (0,2\tau-d) $, needed for the initial length scale estimate in \ref{satz:ALA},
\item $q<\theta n-d$, needed for the weak Wegner-estimate in \ref{bem:W},
\item $\alpha<\frac{2+2\xi}{2d+\xi}$,	needed for step 1 of the induction theorem \ref{satz:indsatz},
\item $\alpha<\frac{2+q}{3d+2\xi}$,	needed for step 3 of the induction theorem  \ref{satz:indsatz},
\item $\theta<\frac{n+2-d-\alpha d}{\alpha n} $,  needed for  step 4 of the induction theorem \ref{satz:indsatz},
\item $\alpha d-1-2\xi <0$, needed for proposition \ref{hsatz:SL:poly},
\item $\frac{d}{\alpha}+\frac{d+2}{2}-\frac{n}{\alpha}<0 $, needed for proposition \ref{hsatz:SL:poly},
\item $\frac{n}{\alpha} -\frac{d}{\alpha}-\frac{d+2}{2}>\frac{d+1}{2} $, needed for the main theorem \ref{satz:hauptsatz}.
\end{enumerate}
Moreover we show that the relations in \ref{IP} are well defined, i.\,e. the stated intervals are nonempty.
\end{bem}
\begin{proof}
\begin{enumerate}[label=(\roman{*})]
\item 
As $\tau > \frac{3}{2}d-1$ we have $\tau >\frac{d}{2}$, as $d\geq 1$.
\item The relation $2d-2\geq 0$ is clear. The interval for $\xi$ is nonempty since: 
\begin{itemize}
\item $2\tau-d>2d-2$ $\Longleftrightarrow$ $ \tau>\frac{3}{2}d-1$
\item using $q>7d-6$ we have 
\begin{align*}
\frac{q-3d+2}{2} > \frac{7d-6-3d+2}{2} =2d-2
\end{align*}
\end{itemize}
\item
Is given by $\theta >\frac{q+d}{n}$.
\item, (v) and (vi) are clear.
\item[(vii)] We have
\begin{align*}
\alpha d -1-2\xi &< \frac{2+2\xi}{2d+\xi}d -1-2\xi=\frac{2d+2\xi d-(2d+\xi)-2\xi(2d+\xi)}{2d+\xi}\\
&=\frac{-\xi-2\xi d-2\xi^2}{2d+\xi}<0
\end{align*}
\item[(ix)] The choice of $n$ in \eqref{IP} implies
\begin{align*}
&0< n-d-\frac{5}{2}\alpha d < n-d-\alpha d -\frac{3}{2}\alpha = n-d-\alpha\left(\frac{d+2}{2} \right)-\alpha\left(\frac{d+1}{2} \right)\\
&\Rightarrow \frac{n}{\alpha} -\frac{d}{\alpha}-\frac{d+2}{2}>\frac{d+1}{2}
\end{align*}
also implying (viii).
\item[(x)] We show nonempty interval for $\alpha$:
\begin{itemize}
\item 
$\frac{2+2\xi}{2d+\xi}>1 $ $\Longleftrightarrow$ $\xi >2d-2 $,
\item
$\frac{2+q}{3d+2\xi}>1\Longleftrightarrow \xi<\frac{q-3d+2}{2} $ 
\end{itemize}
\item[(xi)] Finally we show nonempty interval for $\theta$:
\begin{align*}
n&>9\alpha d+d-2, \qquad \text{with }q<7d \text{ we get}\\
n&>\alpha q +2\alpha d+d-2 \\
\Longrightarrow \frac{n+2-d-\alpha d}{\alpha n}&>\frac{q+d}{n}.\qedhere
\end{align*}
\end{enumerate}
\end{proof}

Moreover $n>19d+16$ is sufficient for $n$:
\begin{align*}
9\alpha d&<9d\cdot \frac{2+2\xi}{2d+\xi}<9d\cdot \frac{2+2\frac{q-3d+2}{2}}{2d+(2d-2)}<  9d\cdot \frac{2+2\frac{7d-3d+2}{2}}{2d+(2d-2)}\\
&\leq 9d\cdot \frac{2d+2}{2d-1}\leq 9d\cdot \frac{2d+2}{d}=18d+18\\
9\alpha d +d-2 &< 18d+18+d-2=19d+16.
\end{align*}

Also $\alpha$ can be estimated:
\begin{align*}
\alpha < \frac{2+q}{3d+2\xi}\leq \frac{2+7d}{3d+2(2d-2)}= \frac{7d-4+6}{7d-4}=1+\frac{6}{7d-4}\leq 1+\frac{6}{3}=3.
\end{align*}\end{appendix}

\printbibliography

\bigskip
\noindent
\begin{tabular}{lll}
Carsten Schubert & \hspace*{0.5cm}& and\\
Fakult\"at Mathematik & &  Fakult\"at f\"ur Mathematik und Informatik \\
Technische Universit\"at Chemnitz& &  Friedrich-Schiller-Universit\"at Jena \\
09107 Chemnitz, Germany  & & Ernst-Abbe-Platz 2, 07743 Jena, Germany\\
\end{tabular}\\
\begin{tabular}{l}
{\tt carsten.schubert@mathematik.tu-chemnitz.de}
\end{tabular}

\end{document}